\numberwithin{equation}{section}
\newcommand{\tabcaption}{\def\@captype{table}\caption}
\newtheorem{lem}{Lemma}[section]
\newtheorem{thm}{Theorem}[section]
\newtheorem{rem}{Remark}[section]
\newtheorem{exmp}{Example}[section]
\title{ An  interface/boundary-unfitted eXtended HDG method for  linear elasticity   problems
  \thanks
  {
    This work was supported in part by  National Natural Science Foundation of
    China (11771312) and Guangdong Basic and Applied Basic Research Foundation (2020B1515310005).
  }
}
\author{
	Yihui Han\thanks{South China Research Center for Applied Mathematics and Interdisciplinary Studies, South China Normal University, Guangzhou 510630, China, Email: yhhan@m.scnu.edu.cn},
	Xiao-Ping Wang\thanks{Department of Mathematics, The Hong Kong University of Science and Technology, Clear Water Bay, Kowloon, Hong Kong, China, Email: mawang@ust.hk},
    Xiaoping Xie \thanks{Corresponding author. School of Mathematics, Sichuan University, Chengdu 610064, China, Email: xpxie@scu.edu.cn}
 }
\date{}
\begin{document}
	\maketitle
\begin{abstract}
	 An  interface/boundary-unfitted eXtended hybridizable discontinuous Galerkin (X-HDG) method of arbitrary order is proposed  for  linear elasticity interface problems  on unfitted meshes  with respect to the  interface and domain boundary. 
		  The    method  uses piecewise polynomials of degrees $k\ (\geq 1)$ and $k-1$ respectively for the  displacement and stress approximations in the interior of elements inside the subdomains  separated by the interface,   and  piecewise  polynomials of degree $k$  for the numerical traces of the displacement on the inter-element boundaries   inside the subdomains and on the interface/boundary of the domain.
	  Optimal error estimates in  $L^2$-norm for the stress and displacement are derived.  Finally, numerical experiments confirm the theoretical results and show that  the method also   applies to the case of crack-tip domain. 
	
	$\bf{Key\ Words}$: eXtended  HDG method,  linear elasticity, interface/boundary-unfitted,  error estimate, crack-tip domain 
	
	{\bf MSC(2010)}:  65N12, 65N30
\end{abstract}

\section{Introduction}
Let $\Omega \subset \mathbb{R}^d\  (d = 2, 3) $ be  a bounded domain  with piecewise smooth boundary  $\partial \Omega=\partial\Omega_{D} \cup \partial\Omega_{N}$, where meas$(\partial\Omega_{D})>0$ and $\partial\Omega_{D} \cap \partial\Omega_{N}=\emptyset$.  The domain $\Omega$  is divided into two subdomains, $\Omega_i \ (i = 1,2)$, by  a piecewise smooth interface $\Gamma$ (cf. Figure \ref{domain} for an example). Consider the linear  elasticity interface problem
\begin{subequations}\label{pb1}
	\begin{align} 
	\mathcal{A} \bm{\sigma}-\bm{\epsilon}(\bm{u}) &=\mathbf{0} \qquad \  \ \text { in } \Omega_1\cup\Omega_2, \\ 
	\nabla \cdot \bm{\sigma} &=\bm{f} \qquad \  \  \text { in } \Omega_1\cup\Omega_2, \\
	\bm{u} &=\bm{g}_{D} \qquad   \text { on } \partial\Omega_{D}, \\ 
	\bm{\sigma} \bm{n} &=\bm{g}_{N} \qquad  \text { on } \partial\Omega_{N}, \label{pb1-d} \\
	\llbracket \bm{u} \rrbracket= \bm{0}, \ \llbracket \bm{\sigma}\bm{n}\rrbracket &= \bm{g}_N^{\Gamma} \qquad   \text { on } \Gamma, \label{pb1-e}
	\end{align}
\end{subequations}
where $\bm{\sigma} : \Omega \rightarrow \mathbb{R}_{sym}^{d \times d}$ denotes the symmetric $d \times d$ stress tensor field, $\bm{u} : \Omega \rightarrow \mathbb{R}^{d}$
the displacement field, $\bm{\epsilon}(\bm{u})=\left(\nabla \bm{u}+(\nabla \bm{u})^{T}\right) / 2$ the strain tensor, and $\mathcal{A} \in
\mathbb{R}_{s y m}^{d \times d}$ the compliance tensor with
\begin{align}\label{def_A}
\mathcal{A} \bm{\sigma}=\frac{1}{2 \mu}\left(\bm{\sigma}-\frac{\lambda}{2 \mu+d \lambda} tr(\bm{\sigma}) \bm{I}\right).
\end{align}
Here $tr(\bm{\sigma})$ denotes the trace of $\bm{\sigma}$,
  $\bm{I}$  the $d \times d$ identity matrix, and  $\lambda$ and $ \mu$   the Lam\'e coefficients with $\lambda|_{\Omega_i} = \lambda_i >0$ and $\mu|_{\Omega_i}=\mu_i>0$.  $\bm{f}$  is the body force,   $\bm{g}_D$ and $\bm{g}_N$ are respectively the surface displacement on $\partial\Omega_D$
and the surface traction on $\partial\Omega_N$, and $\bm{n}$ in \eqref{pb1-d} and  \eqref{pb1-e}  denotes respectively the unit outer normal vector along $\partial\Omega_{N}$ and the unit  normal vector along $\Gamma$ pointing to $\Omega_2$.
The jump of a function $w$ across the interface $\Gamma$ is defined by $ \llbracket  w  \rrbracket = (w|_{\Omega_{1}})|_\Gamma-(w|_{\Omega_{2}})|_\Gamma$. 
Elasticity interface problems 
are usually used to describe complicated elasticity structure   characterized by discontinuous or even singular material properties, and have many   applications in  materials science and continuum mechanics \cite{gao2001continuum,gibiansky2000multiphase,Jou1997Microstructural,leo2000microstructural,persson2003effect,sigmund2001design,sutton1995interfaces}.

\begin{figure}[htp]	
	\centering
	\includegraphics[height = 5.5 cm,width= 6.5 cm]{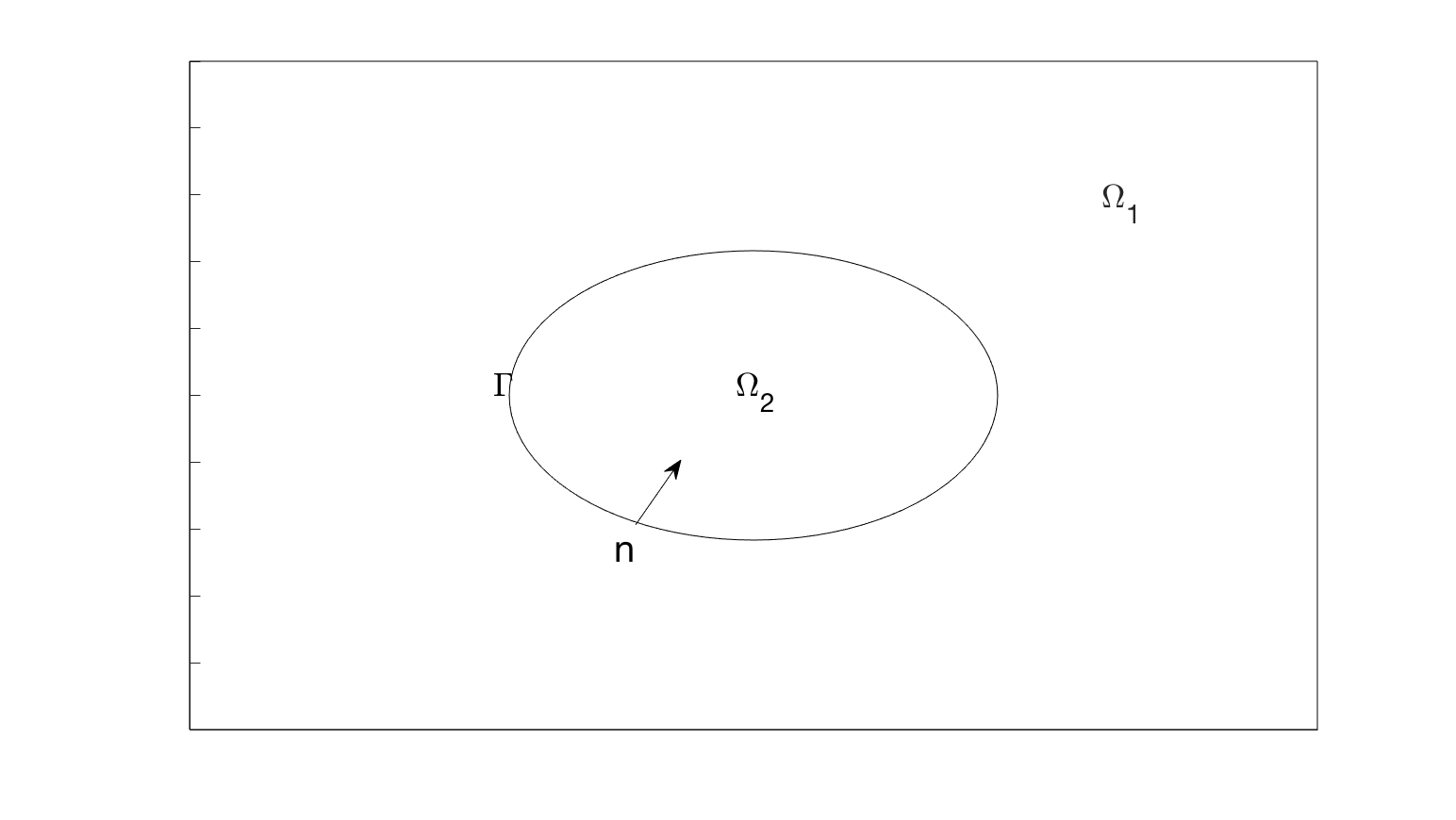} 
	\includegraphics[height = 5.5 cm,width= 6.5 cm]{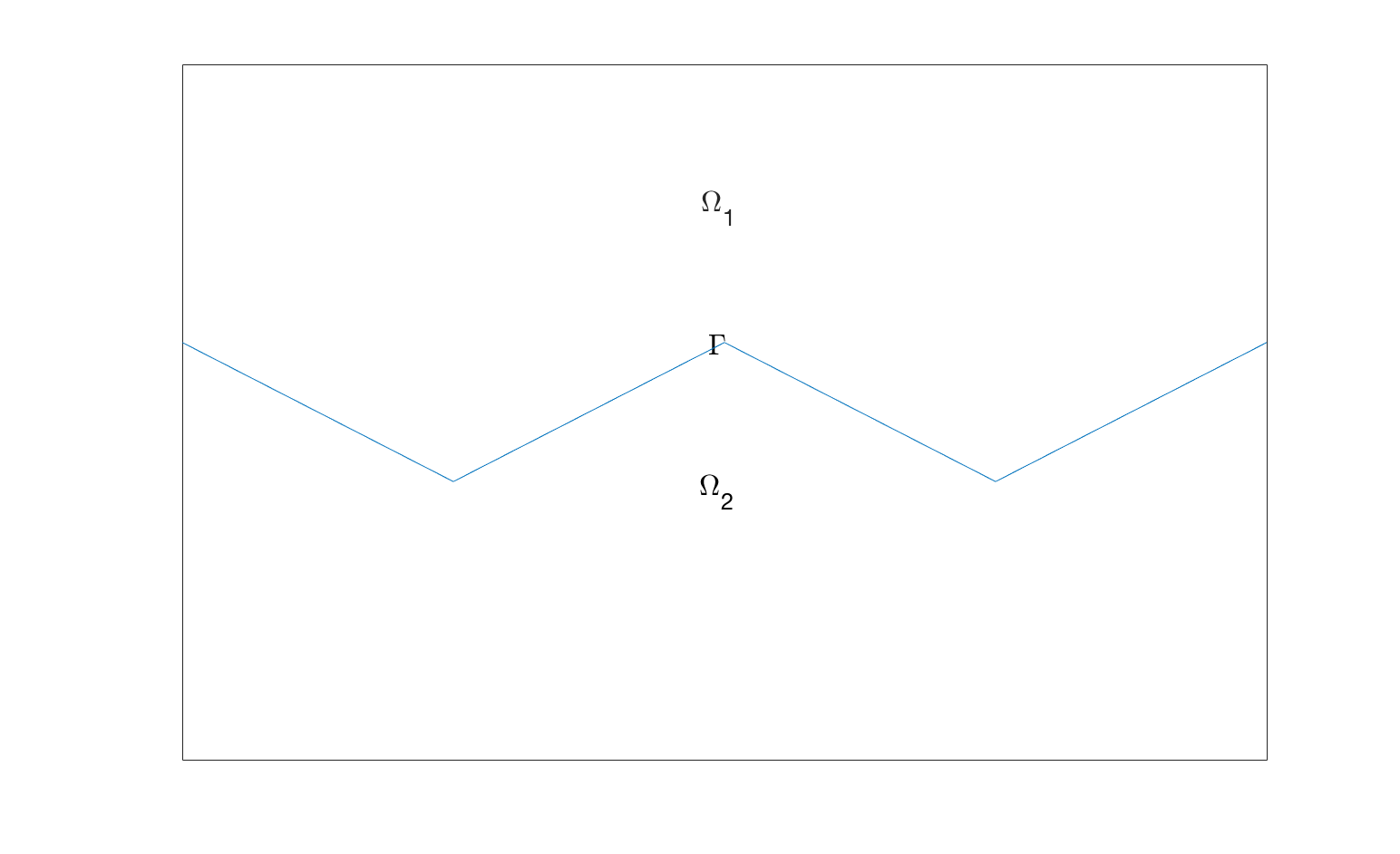} 
	\caption{The geometry of domain with circle interface or fold line interface}\label{domain}
\end{figure}

%
%
 For  elliptic interface problems,  the global regularity of the solutions   is generally   very low, which may lead to reduced accuracy of finite element discretizations  \cite{babuvska1970finite,xu2013estimate}.  To tackle this situation  there are mainly two types of methods in the literature:  interface-fitted methods and interface-unfitted methods.  The  fitted methods use interface-fitted meshes  to dominate the approximation error caused by the low regularity of solutions  \cite{barrett1987fittedandunfitted,Brambles96fin,Cai2017Discontinuous,Cai2011Discontinuous,Chen1998Finite,Huang2002Some,Li2010Optimal,Plum2003Optimal}; see Figure \ref{unfitted mesh} for an example. However, it is usually expensive to generate interface-fitted meshes, especially when the interface is of complicated geometry or moving with time or iteration. 

The unfitted methods, based on  meshes independent of the interface,    
 employ certain types of  modification in the finite element discretization for approximating functions around the interface so as to avoid the  loss of numerical accuracy. One representative unfitted method is the  eXtended/generalized Finite Element Method \cite{babuska2011Stable,babuvska1994special,belytschko2009review,hansbo2002unfitted,Lehrenfeld2016Optimal,moes1999finite,strouboulis2000design,Thomas2010The,Wang2019},  where additional basis functions characterizing the solution singularity around the interface are adopted for  the corresponding approximation function space. 
For elasticity interface problems,  we refer to \cite{Hansbo2004elasticity} for an XFEM of  displacement-type, and to  \cite{Burman2009A,Hansbo2004elasticity} for   a mixed XFEM  based on a displacement-pressure formulation, both of which use   the cut linear polynomials  around the interface as additional basis functions to enrich  the standard linear element displacement spaces.  
We also refer to \cite{BodartXFEM2018,DuflotThe2008,Li2018A,ShenAn2010,ShenStability2010} for some applications of XFEMs in the  simulation of crack propagation  in  fracture mechanics and  \cite{burman2017cut,burman2018cut,guzman2018inf} for   curved domains.
 
%

\begin{figure}[H]
	\begin{minipage}[t]{0.5\linewidth}
		\centering
		\includegraphics[height = 5 cm,width= 8.5 cm, clip, trim = 3.5cm 2cm 3cm 1cm]{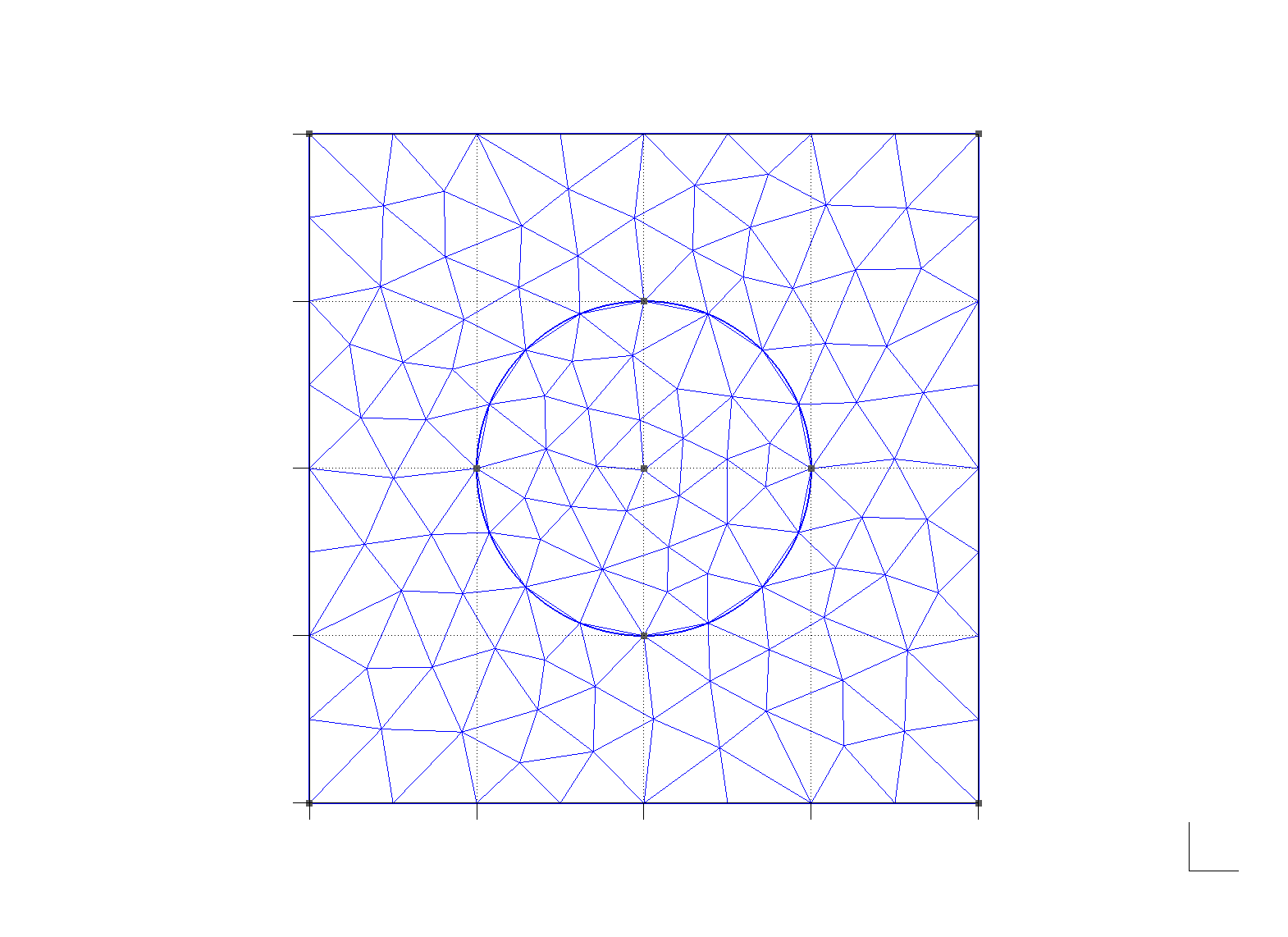}   
	\end{minipage}%
	\begin{minipage}[t]{0.4\linewidth}
		\centering
		\includegraphics[width=2.5in]{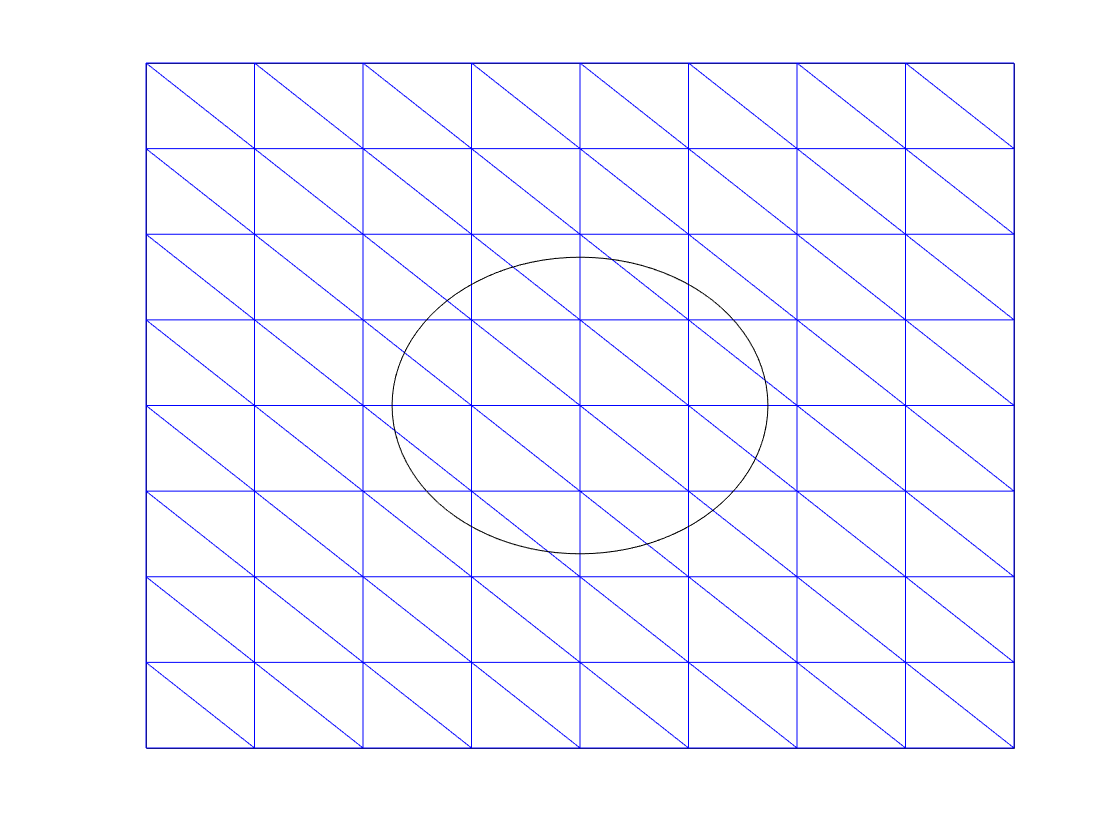}  
		\caption{Fitted mesh (left) and Unfitted mesh (right)}
		\label{unfitted mesh}
	\end{minipage}
\end{figure}

The immersed finite element method (IFEM) is another type of interface-unfitted methods,  where  special finite element basis  functions  are constructed to satisfy the interface jump conditions \cite{Adjerid2015An,GongYan2010Immersed,leveque1994immersed,lizhilin1998immersed,lizhilin2006immersed,lin2015partially, Qin2016A, zhang2004immersed}.  We refer to \cite{LinTao2013A,lintao2012linearifem} for linear/bilinear  immersed finite elements and a nonconforming immersed rectangular  element  for planar elasticity interface problems. 

The hybridizable discontinuous Galerkin (HDG) framework  \cite{Cockburn2009Unified}  provides a unifying strategy for hybridization of finite element methods. 
In this framework,   a trace variable  defined on the mesh skeleton is introduced, as a Lagrange multiplier, so as to relax the  continuity constraint of the approximation solution on the inter-element boundaries. Thus, the HDG method   allows for local elimination of unknowns defined in the interior of elements and   leads to a reduction of the number of degrees of freedom in the  final  discrete system.  
We refer to \cite{chen2015robust,chen2014robust,Cockburn2011HDGstokes,cockburn2010comparison,cockburn2014divergence, Cockburn2013HDGelasticityweaklystresses,Li-X2016analysis,Li-X2016SIAM,Li-X-Z2016analysis,Qiu2016An}
for some   developments   of   the  HDG method and \cite{cockburn2014priori,cockburn2012solving,cockburn2014solving,2016AQiu,solano2019high} for HDG method to deal with domain with curved boundary.  
In   \cite{Dong2016An} an unfitted HDG method  was developed for   two-dimensional Poisson interface problems by constructing a novel ansatz function in the vicinity of the interface. Based on the   XFEM  philosophy and a level set description of interface,  an equal order eXtended HDG (X-HDG) method   was proposed in  \cite{G2016eXtendedvoid} for diffusion problems with  voids  and  later applied to  heat bimaterial problems \cite{G2016eXtended1}.
 In  \cite{HanChenWangXie2019Extended}   two arbitrary order X-HDG methods with optimal convergence rates were presented and analyzed for diffusion interface problems in two and three dimensions. 

 This paper  aims to develop an interface/boundary-unfitted X-HDG method of arbitrary order for the linear elasticity interface problem \eqref{pb1}. The main features of our X-HDG method are as follows.
\begin{itemize}
	\item     The method   uses   piecewise polynomials of degrees $k\ (\geq 1)$ and $k-1$ respectively for the  displacement and stress approximations in the interior of elements inside the subdomains  separated by the interface,   and  piecewise  polynomials of degree $k$  for the numerical traces of the displacement on the inter-element boundaries   inside the subdomains and on the interface/boundary of the domain.  We note that  the unfitted methods in \cite{Burman2009A,Hansbo2004elasticity,LinTao2013A,lintao2012linearifem} are  low order ones, and that the methods in  \cite{chen-xie2016,Qiu2016An} for linear elasticity problems (without interface) use  piecewise polynomials of degrees $k+1$ ($k\geq1$),  $k$ and $k$ respectively for the  displacement, stress  approximations in the interior of elements and  the numerical traces of  displacement  on the inter-element boundaries. 
	
	\item The  method inherits the following  advantages of X-FEM and HDG:   does not require the used meshes   to fit the interface or boundary; allows for local elimination; and   does not require the  stabilization parameters to be ``sufficiently large".
 
 \item 	The derived   error estimates for the  displacement and stress  approximations are optimal. 
	
	\item  The   method applies to any piecewise $C^2$ smooth interface and  any crack-tip domain.
\end{itemize}

The rest of the  paper is organized  as follows. Section 2 introduces  the X-HDG scheme for the elasticity   problem on interface-unfitted meshes and boundary-unfitted meshes, respectively.   Section 3 is devoted to the a priori error estimation for the X-HDG method.  Section 4  provides several numerical examples   to verify the  theoretical results. Finally, Section 5 gives some concluding remarks.

\section{  X-HDG scheme}
\subsection{Notation}
For any bounded polygonal/polyhedral domain $\Lambda\subset \mathbb{R}^s$ $(s= d, d-1)$ and nonnegative integer $m$, let $H^m(\Lambda)$ and $H_0^m(\Lambda)$ be the usual $m$-th order Sobolev spaces on $\Lambda$, with norm $\lVert \cdot\rVert_{m,\Lambda}$ and semi-norm   $\lvert\cdot\rvert_{m,\Lambda}$. In particular, $L^2(\Lambda):=H^0(\Lambda)$ is the space of square integrable functions, with the inner product   $(\cdot,\cdot)_{\Lambda}$.  When $\Lambda \subset  \mathbb{R}^{d-1}$, we use $\langle\cdot,\cdot\rangle_\Lambda$ to replace $(\cdot,\cdot)_\Lambda$.
We set 
\begin{align*}
H^m(\Omega_{1}\cup\Omega_{2}):= \{v\in L^2(\Omega): \ v|_{\Omega_{1}} \in H^m(\Omega_{1}),\  \text{and}\  v|_{\Omega_{2}} \in H^m(\Omega_{2})\},  \\
\lVert\cdot\rVert_m := \lVert\cdot\rVert_{m,\Omega_1\cup\Omega_2} = \sum\limits_{i=1}^{2} \lVert\cdot\rVert_{m,\Omega_i}, \quad
\lvert\cdot\rvert_m :=\lvert\cdot\rvert_{m,\Omega_1\cup\Omega_2}= \sum\limits_{i=1}^{2} \lvert\cdot\rvert_{m,\Omega_i}. 
\end{align*} 
For integer $k\geqslant0$,    $P_k(\Lambda)$   denotes the set of all polynomials on $\Lambda$ with degree no more than $k$. We note that bold face fonts will be used for vector (or tensor) analogues of the Sobolev  spaces along with vector-valued (or tensor-valued) functions. In particular, for the  tensor case we set
$$\bm{L^2}( \Omega,S):=\{\bm{w}\in [L^2(\Omega)]^{d\times d}:\ \bm{w} \text{ is  symmetric} \},$$
$$\bm{H^{m}}(\Omega_1\cup \Omega_2,S):=\bm{L^2}( \Omega,S)\cap [H^m(\Omega_{1}\cup\Omega_{2})]^{d\times d}. $$

Let $\mathcal{T}_h=\cup\{K\}$ be a shape-regular triangulation of the domain $\Omega$ consisting of open triangles/tetrahedrons, which is unfitted with the interface. We define the set of all elements intersected by the interface $\Gamma$ as 
\begin{align*}
\mathcal{T}_h^{\Gamma} :=\{K\in \mathcal{T}_h: K\cap\Gamma \neq \emptyset\}.
\end{align*} 
For any $K\in \mathcal{T}_h^{\Gamma}$ which is called an interface element, let  $\Gamma_K := K\cap \Gamma$ be the part of $ \Gamma$ in $K$, $K_i = K\cap \Omega_{i}$ be the part of $K$ in $\Omega_{i}\ (i = 1,2)$, and   $\Gamma_{K,h}$ be the straight line/plane segment connecting the intersection between $\Gamma_K$ and $\partial K$. 
To ensure that $\Gamma$ is reasonably resolved by $\mathcal{T}_h$, we make the following standard assumptions on $\mathcal{T}_h$ and interface $\Gamma$: 

\noindent  {\bf(A1)}.  For $K\in \mathcal{T}_h^{\Gamma}$ and any edge/face $F\subset \partial K$ which intersects $\Gamma$,  $F_\Gamma:=\Gamma\cap F$ is simply connected with either   $F_\Gamma =F$ or $meas(F_\Gamma)=0$. 

\noindent 	{\bf(A2)}. For $K\in \mathcal{T}_h^{\Gamma}$, there is a smooth function $\psi$ which maps $\Gamma_{K,h}$ onto $\Gamma_K$.

\noindent 	{\bf(A3)}. For any two different points $\bm{x},\bm{y}\in \Gamma_{K}$, the unit normal vectors $\bm{n}(\bm{x})$ and $\bm{n}(\bm{y})$, pointing to $\Omega_{2}$, at $\bm{x}$ and $\bm{y}$ satisfy
\begin{align}\label{gamma}
\lvert \bm{n}(\bm{x})-\bm{n}(\bm{y})\rvert \leq \gamma h_K,
\end{align}
with $\gamma\geq 0$ (cf.\cite{Chen1998Finite,xu2013estimate}). Note that   $\gamma = 0$ when $\Gamma_{K}$ is a straight line/plane segment.

Let  $\mathcal{\varepsilon}_h$ be the set of all edges (faces) of all elements in $\mathcal{T}_h$ and   $\mathcal{\varepsilon}_h^{\Gamma}$ be  the partition of $\Gamma$ with respect to $\mathcal{T}_h$, i.e.
$$\mathcal{\varepsilon}_h^{\Gamma} := \{ F:\ F=\Gamma_{K}, \text{ or } F= \Gamma\cap \partial K \text{ if $\Gamma\cap \partial K$ is an edge/face of $K$},\forall   K\in \mathcal{T}_h\},$$
and  set 
$\mathcal{\varepsilon}_h^* :=\mathcal{\varepsilon}_h\setminus\mathcal{\varepsilon}_h^{\Gamma} $. For any $K\in \mathcal{T}_h$ and   $F\in \mathcal{\varepsilon}_h^*\cup \mathcal{\varepsilon}_h^{\Gamma},$    $h_K$  and $h_F$ denote respectively   the diameters of $K$ and   $F$, and $\bm{n}_K$ denotes the unit outward  normal vector along   $\partial K$. We denote by 
$h: = \max\limits_{K\in \mathcal{T}_h}h_K$  the mesh size of $\mathcal{T}_h$, and by   $\nabla_h$, $\nabla_h\cdot$ and $\bm{\epsilon}_h$ the piecewise-defined gradient, divergence and strain operators with respect to $\mathcal{T}_h$, respectively.  

Throughout the paper, we use $a\apprle b$ $(a\apprge b)$ to denote $a\leq Cb$ $(a\geq Cb)$, 
where   $C$ is a generic positive constant   independent of mesh parameters $h, h_K, h_e$,  
and the  location of the interface relative to the meshes, and may be different at its each occurrence.

\subsection{X-HDG scheme on interface-unfitted meshes}

For  $i=1,2$, let $\chi_i$ be the characteristic function on $\Omega_{i}$, and for  integer $r\geq 0$,  
let $\Pi_{r}^b: L^2(\Lambda)\rightarrow P_r(\Lambda)$ be the standard $L^2$ orthogonal projection operator for any  bounded domain
$\Lambda$.  Vector or tensor analogues of $\Pi_{r}^b$ are denoted by $\bm{\Pi}_{r}^b$ , respectively.
Set 
$$ \oplus\chi_iP_{r}(K) := \chi_1P_{r}(K)+\chi_2P_{r}(K),\quad  \bm{L^2}(\Omega,S) :=\{\bm{w}\in [L^2(\Omega)]^{d\times d}: \bm{w}^T = \bm{w} \}. $$

Let us  introduce  the following X-HDG finite element spaces:
\begin{align*}
\bm{W}_h =& \{\bm{w}\in \bm{L^2}(\Omega,S):  \forall K\in \mathcal{T}_h, \bm{w}|_K \in \bm{P}_{k-1}(K) \ \text{if} \ K\cap \Gamma = \varnothing; \bm{w}|_K \in \oplus\chi_i\bm{P}_{k-1}(K) \ \text{if}\  K\cap \Gamma \neq \varnothing\}, \\
\bm{V}_h =& \{\bm{v}\in \bm{L^2}(\Omega):  \forall K\in \mathcal{T}_h,\bm{v}|_K \in  \bm{P}_{k}(K) \ \text{if} \  K\cap \Gamma = \varnothing; \bm{v}|_K \in \oplus\chi_i  \bm{P}_{k}(K) \ \text{if}\ K\cap \Gamma \neq \varnothing\}, \\
\bm{M}_h =& \{ \bm{\hat{\mu}}\in \bm{L}^2(\varepsilon_h^*): \forall F\in \varepsilon_h^*, \bm{\hat{\mu}}|_F \in \bm{P}_{k}(F) \ \text{if}\   F\cap \Gamma = \varnothing;  \bm{\hat{\mu}}|_F \in \oplus\chi_i \bm{P}_{k}(F) \ \text{if}\   F\cap \Gamma \neq \varnothing\}, \\
\bm{\tilde{M}}_h = &\{\bm{\tilde{\mu}} \in \bm{L^2}(F): 
\bm{\tilde{\mu}}|_{F}\in \bm{P}_{k}(K)|_F,\forall F\in\mathcal{\varepsilon}_h^{\Gamma}\},\\
\bm{M}_h(\bm{g}_D) =& \{\bm{\hat{\mu}}\in \bm{M}_h :   \bm{\hat{\mu}}|_{F\cap\bar\Omega_i} = \bm{\Pi}_{k}^b(\bm{g}_D|_{F\cap\bar\Omega_i}),\   \forall F\in \varepsilon_h^* \text{ with }F\subset \partial\Omega_D \}.
\end{align*}
Then the X-HDG method is given as follows: seek $(\bm{\sigma}_h,\bm{u}_h,\bm{\hat{u}}_h,\bm{\tilde{u}}_h)\in \bm{W}_h\times \bm{V}_h\times \bm{M}_h(\bm{g}_D)\times \bm{\tilde{M}}_h$ such that
\begin{subequations}\label{xhdgscheme}
	\begin{align}
	(\mathcal{A} \bm{\sigma}_h,\bm{w})_{\mathcal{T}_h} + (\bm{u}_h,\nabla_h\cdot \bm{w})_{\mathcal{T}_h} - \langle \hat{\bm{u}}_h,\bm{w}\bm{n}\rangle_{\partial\mathcal{T}_h\setminus \mathcal{\varepsilon}_h^{\Gamma}} -\langle \tilde{\bm{u}}_h,\bm{w}\bm{n}\rangle_{*,\Gamma} =& 0, \label{xhdg1}\\
	(\nabla_h\cdot\bm{\sigma}_h,\bm{v})_{\mathcal{T}_h}- \langle \tau(\bm{u}_h-\bm{\hat{u}}_h),\bm{v}\rangle_{\partial\mathcal{T}_h\setminus \mathcal{\varepsilon}_h^{\Gamma}} 
	-  \langle \eta(\bm{u}_h-\bm{\tilde{u}}_h),\bm{v}\rangle_{*,\Gamma}  =& (\bm{f},\bm{v}) ,\label{xhdg2}\\
	\langle \bm{\sigma}_h \bm{n},\bm{\hat{\mu}}\rangle_{\partial \mathcal{T}_h\setminus \mathcal{\varepsilon}_h^{\Gamma}} -\langle \tau(\bm{u}_h-\bm{\hat{u}}_h),\bm{\hat{\mu}}\rangle_{\partial \mathcal{T}_h\setminus\mathcal{\varepsilon}_h^{\Gamma}} =&\langle \bm{g}_N,\bm{\hat{\mu}}\rangle_{\partial \mathcal{T}_h\setminus \mathcal{\varepsilon}_h^{\Gamma}} ,\label{xhdg3}\\
	\langle \bm{\sigma}_h \bm{n},\bm{\tilde{\mu}}\rangle_{*,\Gamma} - \langle \eta(\bm{u}_h-\bm{\tilde{u}}_h),\bm{\tilde{\mu}}\rangle_{*,\Gamma}=& \langle \bm{g}_N^{\Gamma},\bm{\tilde{\mu}}\rangle_{*,\Gamma} \label{xhdg4}
	\end{align}
\end{subequations}
for all $(\bm{w},\bm{v},\bm{\hat{\mu}},\bm{\tilde{\mu}})\in \bm{W}_h\times \bm{V}_h\times  \bm{M}_h(\bm{0})\times \bm{\tilde{M}}_h$.
Here   
\begin{align*}
(\cdot,\cdot)_{\mathcal{T}_h}:=\sum\limits_{K\in  \mathcal{T}_h}(\cdot,\cdot)_K,\quad  \langle\cdot,\cdot\rangle_{\partial\mathcal{T}_h\setminus \mathcal{\varepsilon}_h^{\Gamma}}:=\sum\limits_{K\in  \mathcal{T}_h}\langle\cdot,\cdot\rangle_{\partial K\setminus \mathcal{\varepsilon}_h^{\Gamma}},
\end{align*}
and, for  vectors $\bm{\mu},\bm{v}$ and tensor $\bm{w}$ with $\bm{\mu}_i=\bm{\mu}|_{F\cap\bar{\Omega}_i}, \bm{v}_i=\bm{v}|_{F\cap\bar{\Omega}_i}$ and $\bm{w}_i=\bm{w}|_{F\cap\bar{\Omega}_i}$, 
\begin{align*}
\langle \bm{\mu},\bm{v}\rangle_{*,\Gamma} :&=\sum\limits_{F\in \varepsilon_h^{\Gamma}}\int_{F} (\bm{\mu}_1\cdot\bm{v}_1+ \bm{\mu}_2\cdot\bm{v}_2)ds,\\
\langle \bm{w}\bm{n},\bm{v}\rangle_{*,\Gamma} :&=\sum\limits_{F\in \varepsilon_h^{\Gamma}}\int_{F} ((\bm{w}_1\bm{n}_1)\cdot\bm{v}_1+(\bm{w}_2\bm{n}_2)\cdot\bm{v}_2)ds,
\end{align*}
where $\bm{n}_i$ denotes  the unit normal vector along $\Gamma$ pointing from   $\Omega_{i}$ to $\Omega_{j}$ with $i,j=1,2$ and $i\neq j$. 
The stabilization functions $\tau$ and $\eta$ are defined as below:  for  $F  \in \varepsilon_h$, $K\in \mathcal{T}_h $ 
and $i=1,2$,
\begin{subequations}
\begin{align}
\tau|_{F\cap\Omega_{i}} &= 2\mu_i h_K^{-1}, \label{stablizationpara1} \quad {\rm if} \ F\subset \partial K\setminus \mathcal{\varepsilon}_h^{\Gamma} \ {\rm and} \ F\cap \Omega_{i}\neq\emptyset ,\\
\eta|_{F\cap \bar{ \Omega}_i}&=2\mu_i h_{K}^{-1},  \quad   \text{ if } F=\Gamma_K \text{ or } F\subset \partial (K \cap \Omega_i). 
\end{align}
\end{subequations}

\begin{rem}\label{rmk2.1}
	In fact, if the homogeneous interface condition $\llbracket \bm{u}\rrbracket= \bm 0$ in the model problem \eqref{pb1} is generalized as $$\llbracket \bm{u}\rrbracket= \bm{g}_D^{\Gamma}\neq \bm{0}$$ and the interface $\Gamma$ is a piecewise straight segment/polygon, then we can introduce   the space 
	$$\bm{\tilde{M}}_h^*(\bm{g}_D^{\Gamma}) = \{\bm{\tilde{\mu}} = \{\bm{\tilde{\mu}}_1,\bm{\tilde{\mu}}_2\} : \bm{\tilde{\mu}}_i|_{F}\in \bm{P}_{k}(F),\ \llbracket \bm{\tilde{\mu}} \rrbracket |_F=\bm{\tilde{\mu}}_1|_F-\bm{\tilde{\mu}}_2|_F= \bm{\Pi}_k^b(\bm{g}_D^{\Gamma}|_F),
	\forall F\in\mathcal{\varepsilon}_h^{\Gamma}\}$$
	so as to obtain the corresponding 
	  X-HDG scheme:  seek  $(\bm{\sigma}_h,\bm{u}_h,\bm{\hat{u}}_h,\bm{\tilde{u}}_h)\in \bm{W}_h\times \bm{V}_h\times \bm{M}_h(\bm{g}_D)\times \bm{\tilde{M}}_h^*(\bm{g}_D^{\Gamma})$ such that the equations \eqref{xhdgscheme} hold for  $(\bm{w},\bm{v},\bm{\hat{\mu}},\bm{\tilde{\mu}})\in \bm{W}_h\times \bm{V}_h\times  \bm{M}_h(\bm{0})\times \bm{\tilde{M}}_h^*(\bm{0})$.  We note that all the analyses hereafter also apply to this case.

\end{rem}

\begin{thm}\label{wellposed}
	For $k\geq 1$, the X-HDG scheme \eqref{xhdgscheme} admits a unique solution $(\bm{\sigma}_h,\bm{u}_h,\bm{\hat{u}}_h,\bm{\tilde{u}}_h)\in \bm{W}_h\times \bm{V}_h\times \bm{M}_h(\bm{g}_D)\times \bm{\tilde{M}}_h$.
\end{thm}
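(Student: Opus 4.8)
The plan is to use the standard energy argument for HDG-type saddle systems. Since \eqref{xhdgscheme} is a square linear system on finite-dimensional spaces --- $\bm{M}_h(\bm{g}_D)$ being an affine translate of the linear space $\bm{M}_h(\bm{0})$, the trial and test spaces have the same dimension --- it suffices to show that the homogeneous problem, obtained by setting $\bm{f}=\bm{0}$, $\bm{g}_D=\bm{0}$, $\bm{g}_N=\bm{0}$ and $\bm{g}_N^\Gamma=\bm{0}$, admits only the trivial solution. So from now on all data vanish and $(\bm{\sigma}_h,\bm{u}_h,\bm{\hat{u}}_h,\bm{\tilde{u}}_h)\in\bm{W}_h\times\bm{V}_h\times\bm{M}_h(\bm{0})\times\bm{\tilde{M}}_h$ solves \eqref{xhdg1}--\eqref{xhdg4} with zero right-hand sides.

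The first step is an energy identity. Taking $\bm{w}=\bm{\sigma}_h$, $\bm{v}=\bm{u}_h$, $\bm{\hat{\mu}}=\bm{\hat{u}}_h$, $\bm{\tilde{\mu}}=\bm{\tilde{u}}_h$ and forming the combination \eqref{xhdg1}$-$\eqref{xhdg2}$+$\eqref{xhdg3}$+$\eqref{xhdg4}, the coupling terms $(\bm{u}_h,\nabla_h\cdot\bm{\sigma}_h)_{\mathcal{T}_h}$, $\langle\bm{\hat{u}}_h,\bm{\sigma}_h\bm{n}\rangle_{\partial\mathcal{T}_h\setminus\varepsilon_h^\Gamma}$ and $\langle\bm{\tilde{u}}_h,\bm{\sigma}_h\bm{n}\rangle_{*,\Gamma}$ cancel pairwise while the $\tau$- and $\eta$-terms recombine into squares, leaving
\[
(\mathcal{A}\bm{\sigma}_h,\bm{\sigma}_h)_{\mathcal{T}_h}
+\langle\tau(\bm{u}_h-\bm{\hat{u}}_h),\bm{u}_h-\bm{\hat{u}}_h\rangle_{\partial\mathcal{T}_h\setminus\varepsilon_h^\Gamma}
+\langle\eta(\bm{u}_h-\bm{\tilde{u}}_h),\bm{u}_h-\bm{\tilde{u}}_h\rangle_{*,\Gamma}=0 .
\]
Since $\mathcal{A}$ is symmetric positive definite on $\mathbb{R}_{sym}^{d\times d}$ (because $\mu_i,\lambda_i>0$) and $\tau,\eta>0$ by \eqref{stablizationpara1} and the definition of $\eta$, all three nonnegative contributions vanish, so that $\bm{\sigma}_h=\bm{0}$ in $\Omega$, $\bm{u}_h=\bm{\hat{u}}_h$ on $\partial\mathcal{T}_h\setminus\varepsilon_h^\Gamma$, and $(\bm{u}_h|_{K_i})|_{\Gamma_K}=\bm{\tilde{u}}_h|_{\Gamma_K}$ for $i=1,2$.

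Next I would insert $\bm{\sigma}_h=\bm{0}$ into \eqref{xhdg1}, which then reads $(\bm{u}_h,\nabla_h\cdot\bm{w})_{\mathcal{T}_h}-\langle\bm{\hat{u}}_h,\bm{w}\bm{n}\rangle_{\partial\mathcal{T}_h\setminus\varepsilon_h^\Gamma}-\langle\bm{\tilde{u}}_h,\bm{w}\bm{n}\rangle_{*,\Gamma}=0$ for all $\bm{w}\in\bm{W}_h$. Integrating by parts on each uncut element $K$, and on each interface element separately over $K_1$ and $K_2$ --- so the portions of $\partial K_i$ lying on $\partial K$ feed $\langle\cdot,\cdot\rangle_{\partial\mathcal{T}_h\setminus\varepsilon_h^\Gamma}$ and the portions $\Gamma_K$ feed $\langle\cdot,\cdot\rangle_{*,\Gamma}$ (since on $\Gamma_K$ the outward normal of $K_i$ is $\bm{n}_i$) --- and using the symmetry of $\bm{w}$ together with the trace matchings just obtained, every boundary integral cancels and one is left with $(\bm{\epsilon}_h(\bm{u}_h),\bm{w})_{\mathcal{T}_h}=0$ for all $\bm{w}\in\bm{W}_h$. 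As $\bm{\epsilon}_h(\bm{u}_h)$ is symmetric and piecewise in $\bm{P}_{k-1}(K)$ (respectively in $\oplus\chi_i\bm{P}_{k-1}(K)$ on interface elements), it belongs to $\bm{W}_h$; choosing $\bm{w}=\bm{\epsilon}_h(\bm{u}_h)$ yields $\bm{\epsilon}_h(\bm{u}_h)=\bm{0}$, and since a polynomial strain vanishing on a nonempty open subset of $K$ vanishes on all of $K$, $\bm{u}_h$ coincides on each uncut $K$ (respectively on each $K_i$) with a rigid-body motion defined on all of $K$.

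Finally I would glue these local rigid motions and use the Dirichlet datum. On every interior face $F\in\varepsilon_h^*$ the two element traces of $\bm{u}_h$ coincide with the single-valued $\bm{\hat{u}}_h|_F$, and on every $\Gamma_K$ the traces of $\bm{u}_h$ from $K_1$ and $K_2$ coincide with the single-valued $\bm{\tilde{u}}_h|_{\Gamma_K}$; two rigid motions that agree on a set of positive $(d-1)$-dimensional measure coincide, and since $\mathcal{T}_h$ triangulates the connected domain $\Omega$, all the local rigid motions agree into one global rigid motion $\bm{r}$ with $\bm{u}_h\equiv\bm{r}$. On any face $F\subset\partial\Omega_D$ one has $\bm{r}|_F=\bm{u}_h|_F=\bm{\hat{u}}_h|_F=\bm{\Pi}_k^b(\bm{g}_D|_F)=\bm{0}$, and a rigid motion vanishing on a subset of $\partial\Omega_D$ of positive surface measure (recall meas$(\partial\Omega_D)>0$) is identically zero, hence $\bm{u}_h=\bm{0}$; then $\bm{\hat{u}}_h=\bm{0}$ on $\varepsilon_h^*$ and $\bm{\tilde{u}}_h=\bm{0}$ on $\Gamma$, and together with $\bm{\sigma}_h=\bm{0}$ this establishes triviality, hence unique solvability. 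The main obstacle is the integration by parts in the third step on the cut elements, where $\bm{w}$ and $\bm{u}_h$ live on the two (generally curved) pieces $K_1,K_2$ separated by $\Gamma_K$: one has to match the emerging boundary integrals precisely with the pairings $\langle\cdot,\cdot\rangle_{*,\Gamma}$ and $\langle\cdot,\cdot\rangle_{\partial\mathcal{T}_h\setminus\varepsilon_h^\Gamma}$, and to invoke assumptions (A1)--(A3) so that the faces and interface segments used in the gluing all carry positive $(d-1)$-measure; by contrast the energy identity and the rigid-motion gluing themselves are routine.
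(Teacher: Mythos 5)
Your proof is correct and follows essentially the same route as the paper: reduce to the homogeneous system, derive the energy identity (your sign combination $\eqref{xhdg1}-\eqref{xhdg2}+\eqref{xhdg3}+\eqref{xhdg4}$ is the right one, and it uses the positive definiteness of $\mathcal{A}$ exactly as the paper's deviatoric/trace decomposition does), then test \eqref{xhdg1} with $\bm{w}=\bm{\epsilon}_h(\bm{u}_h)$ to get $\bm{\epsilon}_h(\bm{u}_h)=\bm{0}$ and conclude via the Dirichlet data. The only difference is presentational: you spell out the piecewise rigid-motion gluing across faces and across $\Gamma_K$ that the paper compresses into the single sentence ``In view of $\bm{\hat{u}}_h=\bm{0}$ on $\partial\Omega_D$, we get $\bm{u}_h=\{\bm{\tilde{u}}_h\}=\bm{0}$.''
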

\begin{proof}
	Since the \eqref{xhdgscheme} is a linear square system, it suffices to show that if all of the given data vanish, i.e. $\bm{f} = \bm{g}_D  = \bm{g}_N = \bm{g}_N^{\Gamma} = \bm{0}$, then we get the zero solution. Taking  $(\bm{w},\bm{v},\bm{\mu},\bm{\tilde{\mu}}) = (\bm{\sigma}_h,\bm{u}_h,\bm{\hat{u}_h},\bm{\tilde{u}_h})$ in \eqref{xhdgscheme} and adding these equations together, we have
	\begin{align}
	(\mathcal{A} \bm{\sigma}_h,\bm{\sigma}_h)_{\mathcal{T}_h} +  \langle \tau(\bm{u}_h-\bm{\hat{u}}_h),\bm{u}_h-\bm{\hat{u}}_h\rangle_{\partial\mathcal{T}_h\setminus \mathcal{\varepsilon}_h^{\Gamma}} 
	+  \langle \eta(\bm{u}_h-\bm{\tilde{u}}_h),\bm{u}_h-\bm{\tilde{u}}_h\rangle_{*,\Gamma} = 0,
	\end{align}
which, together with the relation
$$	(\mathcal{A} \bm{w}, \bm{w})_{\mathcal{T}_h}  =\left(\frac{1}{2 \mu}(\bm{w}-\frac{1}{d} tr(\bm{w}) I), \bm{w}-\frac{1}{d} tr(\bm{w}) I\right)_{\mathcal{T}_h}+\left(\frac{1}{d(d \lambda+2 \mu)}tr(\bm{w}), tr(\bm{w})\right)_{\mathcal{T}_h}, \ \forall 	 \bm{w}\in \bm{W}_h, $$
shows that
	\begin{align*}
	\bm{\sigma}_h &= \bm{0},  \quad  \text{ in } \mathcal{T}_h ,\\
	\bm{u}_h-\bm{\hat{u}}_h &= \bm{0}, \quad \text{ on} \ \partial\mathcal{T}_h\setminus \mathcal{\varepsilon}_h^{\Gamma}, \\
	\{\bm{u}_h-\bm{\tilde{u}}_h\} &= \bm{0},  \quad \text{ on}\  \Gamma.
	\end{align*}
	Here $\{\cdot\} $ is defined by  $\{\bm{v}\} = \frac{1}{2}(\bm{v}_1+\bm{v}_2)$ with $\bm{v}_i=\bm{v}|_{\Gamma\cap\bar{\Omega}_i} $ for $i=1,2$. These  relations, plus
	 \eqref{xhdg1} and integration by parts, yield
	\begin{align*}
	(\mathcal{A}\bm{\sigma}_h,\bm{w})_{\mathcal{T}_h} +(\bm{\epsilon}_h (\bm{u}_h), \bm{w})_{\mathcal{T}_h} - \langle \bm{u}_h-\bm{\hat{u}}_h,\bm{w} \bm{n}\rangle_{\partial\mathcal{T}_h\setminus \mathcal{\varepsilon}_h^{\Gamma}} -\langle \bm{u}_h-\bm{\tilde{u}}_h,\bm{w}\bm{n}\rangle_{*,\Gamma} = 0.
	\end{align*}
	Taking   $\bm{w}_h =\bm{\epsilon}_h (\bm{u}_h)$ in this relation leads to $\bm{\epsilon}_h (\bm{u}_h) = \bm{0}$. In view of  $\bm{\hat{u}}_h = \bm{0}$ on $\partial \Omega_D$,  we get $\bm{u}_h = \{\bm{\tilde{u}}_h\} = \bm{0}$. Thus,  from $\llbracket \bm{\tilde{u}}_h\rrbracket = \bm{0}$ it follows  
	\begin{align*}
	\bm{\sigma}_h =\bm{0} \quad \text{ and } \quad  \bm{u}_h =  \bm{\hat{u}}_h  = \bm{\tilde{u}}_h  =\bm{0}.
	\end{align*}
This completes the proof.
\end{proof}

\subsection{X-HDG scheme on boundary-unfitted meshes}
In this subsection, we shall extend the X-HDG method in Section 2.1 to the case using boundary-unfitted meshes (cf. Figure \ref{domain1} for an example). For simplicity,    we consider the following linear elasticity problem:  
\begin{subequations}\label{pb2}
	\begin{align} 
	\mathcal{A} \bm{\sigma}-\bm{\epsilon}_h(\bm{u}) &=\mathbf{0}, \ \  \text { in } \Omega, \\ 
	\nabla \cdot \bm{\sigma} &=\bm{f}, \ \  \text { in } \Omega ,\\
	\bm{u} &=\bm{g}_{D}, \text { on } \partial\Omega_{D}, \\ 
	\bm{\sigma} \bm{n} &=\bm{g}_{N}, \text { on } \partial\Omega_{N}.
	\end{align}
\end{subequations}

%

\begin{figure}[htp]	
	\centering
	\includegraphics[height = 5.5 cm,width= 6.5 cm]{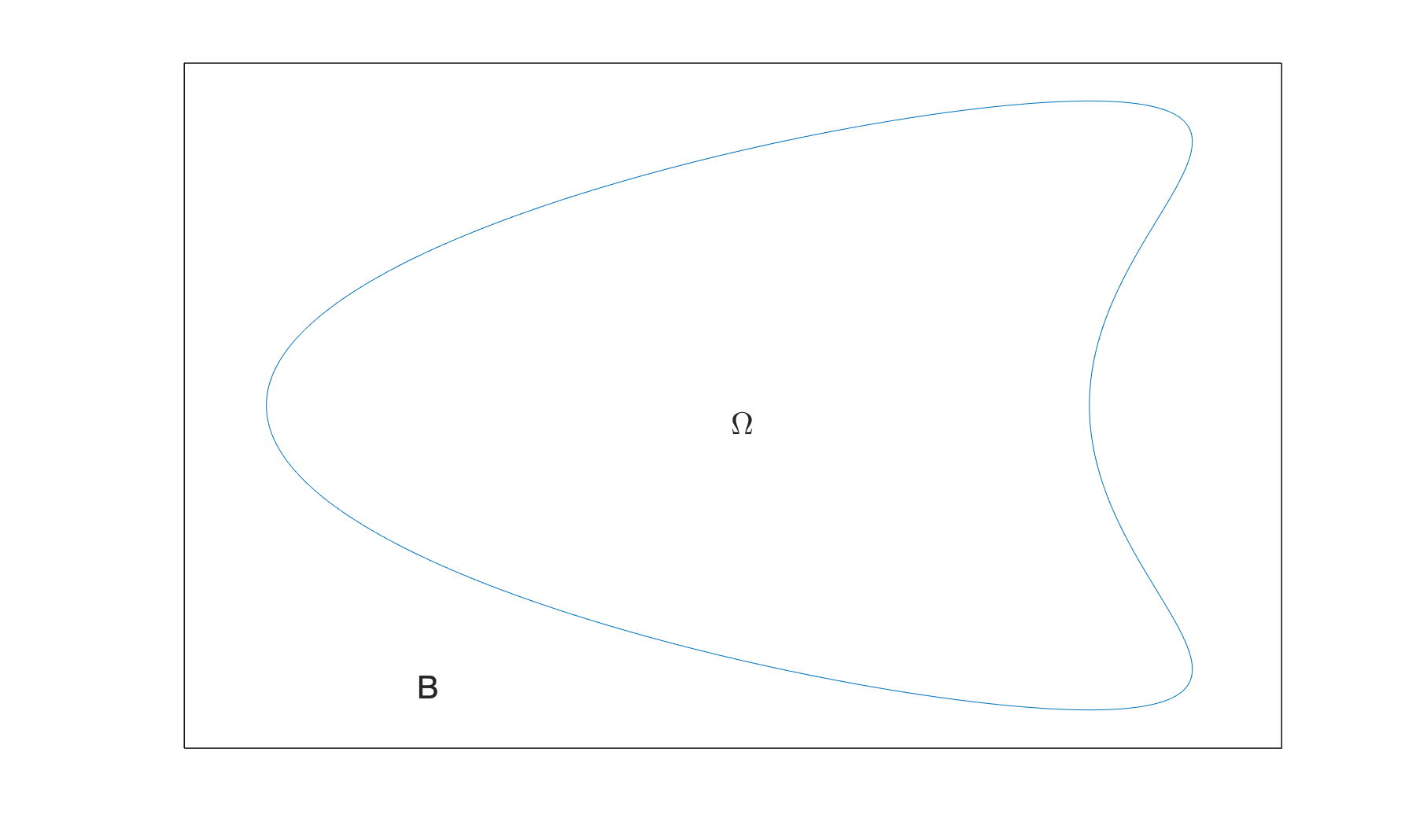} 
	\includegraphics[height = 5.5 cm,width= 6.5 cm]{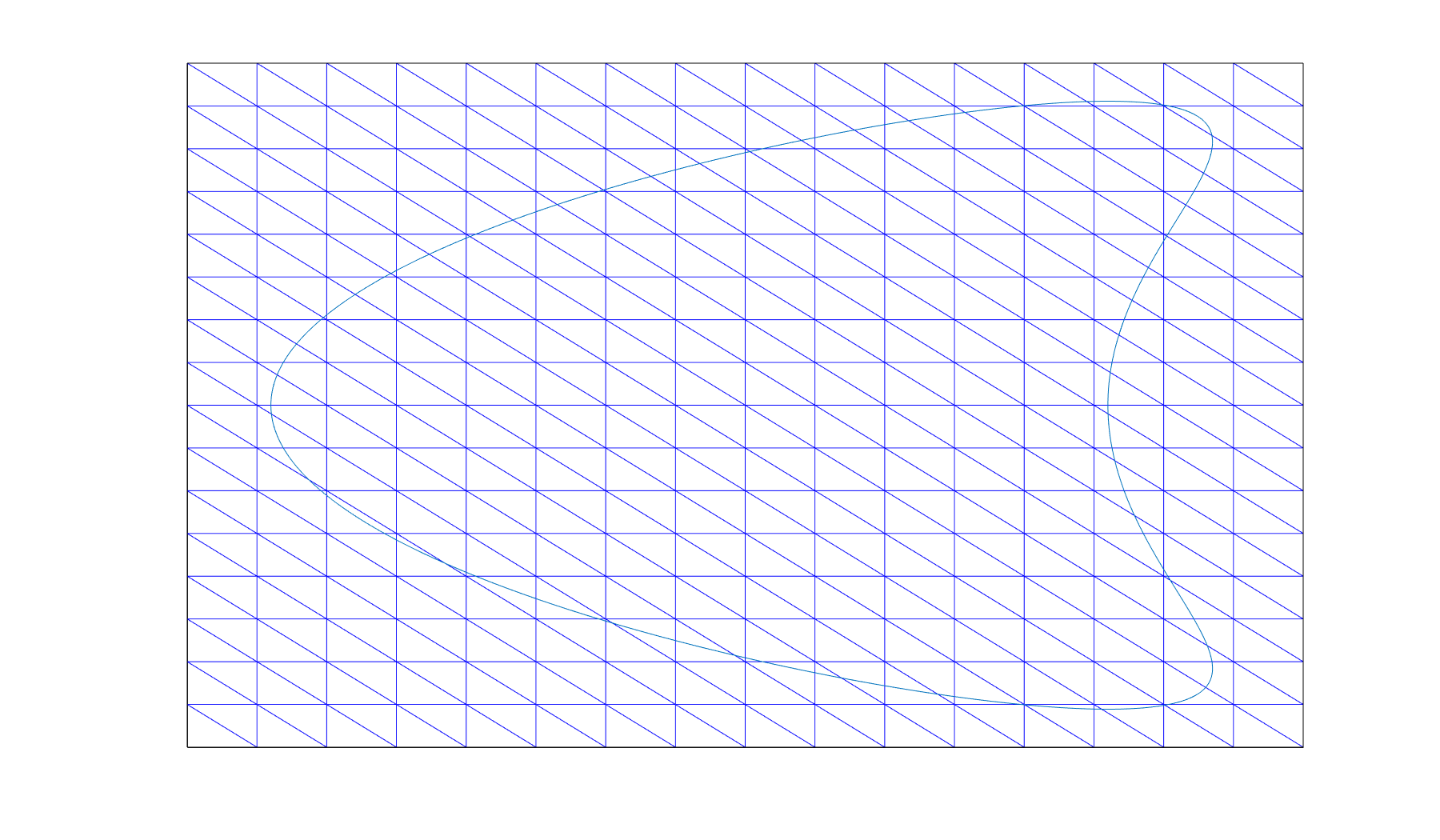} 
	\caption{The geometry of domain with piecewise smooth boundary(left) and boundary unfitted mesh(right)}\label{domain1}
\end{figure}
Let $\mathbb{B}\supset \Omega$ be a simpler  domain than $\Omega$ (Figure \ref{domain1}),  denote $\Omega^{c} := \mathbb{B}\backslash \bar{\Omega}$, and introduce a vector function $\bm{\tilde{f}}$ defined on $\mathbb{B}$ with $\bm{\tilde{f}}|_{\Omega} = \bm{f}$ and $\bm{\tilde{f}}|_{\Omega^c} = \bm{0}$.  Then we can rewrite  problem \eqref{pb2} as an interface problem:  
\begin{subequations}\label{pb3}
	\begin{align} 
	\mathcal{A} \bm{\sigma}-\bm{\epsilon}(\bm{u}) &=\mathbf{0}, \text { in } \Omega\cup\Omega^{c} , \\ 
	\nabla \cdot \bm{\sigma} &=\bm{\tilde{f}}, \text { in } \Omega \cup\Omega^{c},\\
	\bm{u} \equiv \bm{0}, \ \bm{\sigma}  &\equiv \bm{0},   \text { in } \Omega^{c},\\
	\llbracket \bm{u} \rrbracket&=\bm{g}_{D}, \text { on } \partial\Omega_{D}, \\ 
	\llbracket \bm{\sigma} \bm{n} \rrbracket &=\bm{g}_{N}, \text { on } \partial\Omega_{N}.
	\end{align}
\end{subequations}
We note that the problem \eqref{pb3} is a special interface problem with $\partial \Omega$ being the interface, for which we 
only need to approximate the solution in $\Omega$, since the solution in $\Omega^{c}$ is   zero.

Let $\mathcal{T}_h=\cup\{K\}$ be a shape-regular triangulation of the domain $\mathbb{B}$ consisting of open triangles/tetrahedrons. 
Define the following sets of elements and   edges/faces:
\begin{align*}
\mathcal{T}_h^{i} :=&\{K\in \mathcal{T}_h: K\cap\Omega = K\}, \\
\mathcal{T}_h^{\Gamma} :=&\{K_{\Gamma}: K_{\Gamma} = K\cap \Omega, K\in \mathcal{T}_h \ \text{and}\ K\cap\partial\Omega \neq \emptyset\}, \\
\mathcal{T}_h^{*} :=& \mathcal{T}_h^{i}\cup \mathcal{T}_h^{\Gamma},\\
\mathcal{\varepsilon}_h^{i} :=&\{F:\ F \text{ is a edge/face of element in}\  \mathcal{T}_h^i \ \text{and} \ F\cap \partial\Omega = \emptyset\}, \\
\mathcal{\varepsilon}_h^{\Gamma} :=&\{F:\ F \text{ is a edge/face of element in}\  \mathcal{T}_h^{\Gamma} \ \text{and} \ F\cap \partial\Omega = \emptyset\}, \\
\mathcal{\varepsilon}_h^{\partial} :=& \{F:\ F = K\cap\partial\Omega, \forall K\in \mathcal{T}_h^{\Gamma} \ \text{or} \ F \ \text{is a edge/face of K}, \forall K\in \mathcal{T}_h^{i} \ \text{and} \ \bar{K}\cap \partial\Omega\neq \emptyset\},\\
\mathcal{\varepsilon}_h :=& \mathcal{\varepsilon}_h^{i}\cup \mathcal{\varepsilon}_h^{\Gamma} \cup \mathcal{\varepsilon}_h^{\partial}.
\end{align*} 
Introduce  the following X-HDG finite element spaces:
\begin{align*}
\bm{W}_h :=& \{\bm{w}\in \bm{L^2}( \Omega,S): \bm{w}|_K \in \bm{P}_{k-1}(K) \ {\rm if} \  K\in   \mathcal{T}_h^{*}\}, \\
\bm{V}_h :=& \{\bm{v}\in \bm{L^2}(\Omega): \bm{v}|_K \in  \bm{P}_{k}(K) \ {\rm if} \  K\in   \mathcal{T}_h^{*}\}, \\
\bm{M}_h^{i} :=& \{\bm{\mu}\in \bm{L^2}(\varepsilon_h\backslash\varepsilon_h^{\partial}): \ \bm{\mu}|_F \in  \bm{P}_{k}(F) \  {\rm if} \  F\in \varepsilon_h\backslash\varepsilon_h^{\partial}\},\\
\bm{M}_h^{\partial}: =& \{\tilde{\bm{\mu}}\in \bm{L^2}(\varepsilon_h^{\partial}) : \  \tilde{\bm{\mu}}|_F \in \bm{P}_{k}(K)|_F,\  \forall F\in \varepsilon_h^{\partial}, \ \text{ and for some } K\in   \mathcal{T}_h^{*}\},\\
\bm{M}_h^{\partial}(\bm{g}_D): =& \{\bm{\tilde{\mu}}\in \bm{L^2}(\varepsilon_h^{\partial}) : \  \langle \bm{\tilde{\mu}}, \bm{\mu^*}\rangle_{F}=\langle \bm{g}_D, \bm{\mu^*}\rangle_{F},\  \forall F\in \varepsilon_h^{\partial}\cap\partial\Omega_D, \ {\rm and} \  \bm{\tilde{\mu}}|_F, \bm{\mu^{*}}|_F \in P_{k}(K)|_F   \text{ for some } K\in   \mathcal{T}_h^{*}\}. 
\end{align*}
The X-HDG scheme for problem \eqref{pb3} reads as follows: seek $(\bm{\sigma}_h,\bm{u}_h,\bm{\hat{u}}_h,\bm{\tilde{u}}_h)\in \bm{W}_h\times \bm{V_h}\times \bm{M}_h^i\times \bm{M}_h^{\partial}(\bm{g}_D)$ such that
\begin{subequations}\label{xhdgscheme_1}
	\begin{align}
	(\mathcal{A}\bm{\sigma}_h,\bm{w})_{\mathcal{T}_h^*} + (\bm{u}_h,\nabla_h\cdot \bm{w})_{\mathcal{T}_h^*} - \langle \hat{\bm{u}}_h,\bm{w}\bm{n}\rangle_{\partial\mathcal{T}_h^*\setminus \mathcal{\varepsilon}_h^{\partial}} -\langle \tilde{\bm{u}}_h,\bm{w}\bm{n}\rangle_{\varepsilon_h^{\partial}} =& 0, \label{xhdg1_1}\\
	-(\nabla_h\cdot\bm{\sigma}_h,\bm{v})_{\mathcal{T}_h^*} + \langle \tau(\bm{u}_h-\bm{\hat{u}}_h),\bm{v}\rangle_{\partial\mathcal{T}_h^*\setminus \mathcal{\varepsilon}_h^{\partial}} 
	+  \langle \eta(\bm{u}_h-\bm{\tilde{u}}_h),\bm{v}\rangle_{\varepsilon_h^{\partial}}  =& (\bm{f},\bm{v}) ,\label{xhdg2_1}\\
	\langle \bm{\sigma}_h \bm{n},\bm{\hat{\mu}}\rangle_{\partial \mathcal{T}_h^*\setminus \mathcal{\varepsilon}_h^{\partial}}-\langle \tau(\bm{u}_h-\bm{\hat{u}}_h),\bm{\hat{\mu}}\rangle_{\partial \mathcal{T}_h^*\setminus\mathcal{\varepsilon}_h^{\partial}} =& 0,\label{xhdg3_1}\\
	\langle \bm{\sigma}_h \bm{n},\bm{\tilde{\mu}}\rangle_{ \mathcal{\varepsilon}_h^{\partial}}-\langle \eta(\bm{u}_h-\bm{\tilde{u}}_h),\bm{\tilde{\mu}}\rangle_{\mathcal{\varepsilon}_h^{\partial}} =&\langle \bm{g}_N,\bm{\tilde{\mu}}\rangle_{\mathcal{\varepsilon}_h^{\partial}}\label{xhdg4_1}
	\end{align}
\end{subequations}
for all $(\bm{w},\bm{v},\bm{\hat{\mu}},\bm{\tilde{\mu}})\in \bm{W}_h\times \bm{V_h}\times  \bm{M}_h^i\times \bm{M}_h^{\partial}(\bm{0})$, and the stabilization coefficient is given by
\begin{align}
\tau|_F = \eta|_F = 2\mu h_K^{-1}, \ \forall F\in \partial K \ {\rm or} \ F = K\cap \Omega.
\end{align}
\begin{rem}
	By following the same routine as in the proof of Theorem \ref{wellposed}, we can easily know that the scheme \eqref{xhdgscheme_1} admits a unique solution for $k\geq 1$. 
\end{rem}

\section{A priori error estimation}

This section is devoted  the error analysis of  X-HDG scheme \eqref{xhdgscheme} for the linear elasticity interface problem \eqref{pb1}.
 To this end, we make the following assumption on the interface $\Gamma$ and the triangulation  $\mathcal{T}_h$.\\
{\bf (A4).}
     $\Gamma\cap \partial\Omega_D$
contains at most  some vertexes (2D) or edges (3D) of elements in $\mathcal{T}_h$.

\subsection{Some basic results}

The following lemma from \cite{wu2010unfitted,xiao2019an}  will be used to carry out   error estimation of projections around   the interface $\Gamma$ (cf. Lemma \ref{ineq}).

\begin{lem}\label{trace}
	There exists a positive constant $h_0$ depending only on the interface $\Gamma$, the shape regularity of the mesh $\mathcal{T}_h$, and $\gamma$ in \eqref{gamma}, such that for any $h \in (0, h_0]$ and   $ K\in \mathcal{T}_h^{\Gamma}$, 
	the following estimates hold for either $i=1$ or $i=2$: 
	\begin{align}
	\lVert v\rVert_{0,\Gamma_K} &\apprle h_K^{-1/2}\lVert v\rVert_{0,K\cap\Omega_{i}} +\lVert v\rVert_{0, K\cap\Omega_{i}}^{\frac{1}{2}} \lVert \nabla v\rVert_{0,K\cap\Omega_{i}}^{\frac{1}{2}}, \quad \forall  v\in H^1(K\cap\Omega_{i}),  \\
	\lVert v_h\rVert_{0,\Gamma_K} &\apprle h_K^{-1/2}\lVert v_h\rVert_{0,K\cap\Omega_{i}}, \quad \forall v_h\in P_r(K) .
	\end{align}
\end{lem}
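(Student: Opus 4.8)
The plan is to reduce everything to a fixed reference configuration by the affine map that sends $K$ to a unit-size simplex $\hat K$. Writing $\hat v(\hat x)=v(x)$ and unravelling the powers of $h_K$, both asserted inequalities follow from the single scale-invariant trace estimate $\|\hat v\|_{0,\hat\Gamma}\lesssim\|\hat v\|_{0,\hat K_i}+\|\hat v\|_{0,\hat K_i}^{1/2}\|\hat\nabla\hat v\|_{0,\hat K_i}^{1/2}$ on the reference element, where $\hat\Gamma$ is the image of $\Gamma_K$ and $\hat K_i$ the image of $K\cap\Omega_i$. Under this map, (A3) makes the principal curvatures of $\hat\Gamma$ of order $\gamma h_K\le\gamma h_0$, while (A1)--(A2) make $\hat\Gamma$ a single $C^2$ graph over the flat chord $\hat\Gamma_h$; so for $h_0$ small $\hat\Gamma$ is as close to flat as desired, and $\hat\Gamma$ lies within $O(\gamma h_K^2)$ of $\hat\Gamma_h$ after undoing the scaling.

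The heart of the matter is a uniform geometric claim: for $h\le h_0$, at least one of the pieces $K\cap\Omega_1,K\cap\Omega_2$ contains a ball $B(z_0,r_0)$ with $r_0\gtrsim h_K$ (the hidden constant depending only on shape regularity and $\gamma h_0$), and that piece is star-shaped with respect to $B(z_0,r_0)$. To get this, let $B(x_0,\rho)$ be the inball of $K$, so $\rho\gtrsim h_K$; the chord hyperplane $\Gamma_{K,h}$ either avoids $B(x_0,\rho)$ (then one flat-cut piece of $K$ contains all of it) or cuts it, in which case the side containing $x_0$ contains a shifted ball of radius $\rho/2$. Since $\Gamma_K$ is within $O(\gamma h_K^2)$ of $\Gamma_{K,h}$ and $\rho\gtrsim h_K\gg h_K^2$, this ball, shrunk to radius $\rho/4$, still sits inside the true piece $K\cap\Omega_i$ once $h\le h_0$. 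The flat-cut piece is an intersection of two convex sets, hence star-shaped with respect to that ball, and replacing its single curved face by the gently bent $\Gamma_K$ preserves star-shapedness for $h_0$ small. Passing to the reference cell gives $\hat K_i\supset B(z_0,r_0)$, star-shaped with respect to it, with $\hat\Gamma\subset\partial\hat K_i$.

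Granting the claim, the reference trace inequality follows from the standard divergence-theorem device: take $\phi(\hat x)=\hat x-z_0$, which has bounded $C^1$-norm and satisfies $\phi\cdot\nu\gtrsim 1$ on $\partial\hat K_i$ by star-shapedness, and integrate $\nabla\cdot(\hat v^2\phi)$ over $\hat K_i$ to obtain $\int_{\hat\Gamma}\hat v^2\,d\hat s\le\int_{\partial\hat K_i}\hat v^2(\phi\cdot\nu)\,d\hat s\lesssim\|\hat v\|_{0,\hat K_i}^2+\|\hat v\|_{0,\hat K_i}\|\hat\nabla\hat v\|_{0,\hat K_i}$; scaling back reproduces the first estimate. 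For the discrete estimate, combine the first one with an inverse inequality on the cut piece: for $v_h\in P_r(K)$, $\|\nabla v_h\|_{0,K\cap\Omega_i}\le\|\nabla v_h\|_{0,K}\lesssim h_K^{-1}\|v_h\|_{0,K}\lesssim h_K^{-1}\|v_h\|_{0,K\cap\Omega_i}$, where the last step uses $\|p\|_{0,K}\lesssim\|p\|_{0,K\cap\Omega_i}$ uniformly in the interface position — again a consequence of the geometric claim together with equivalence of norms on the finite-dimensional space $P_r$ and a compactness argument over all admissible balls. Substituting this into the first estimate gives $\|v_h\|_{0,\Gamma_K}\lesssim h_K^{-1/2}\|v_h\|_{0,K\cap\Omega_i}$.

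I expect the uniform geometric claim to be the main obstacle: producing the interior ball and the star-shapedness with constants that do not degenerate as the interface approaches a vertex or edge of $K$ is exactly where assumptions (A1)--(A3) and the threshold $h_0$ must be used together, whereas the trace inequality and the polynomial inverse inequality are then routine.
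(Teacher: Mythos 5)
The paper does not actually prove Lemma~\ref{trace}: it is imported from \cite{wu2010unfitted,xiao2019an}, so there is no in-paper argument to compare against, only the cited one. Your reconstruction follows essentially the same strategy as that reference: first establish the uniform geometric fact that for at least one $i$ the cut piece $K\cap\Omega_i$ contains a ball $B(z_0,r_0)$ with $r_0\gtrsim h_K$ and is star-shaped with respect to it (this is where (A1)--(A3), shape regularity and the threshold $h_0$ must all be used, and where the constant must not degenerate as $\Gamma$ slides toward a vertex or edge of $K$); then derive the continuous trace bound from $\int_{\partial(K\cap\Omega_i)}v^2\,\phi\cdot\nu\,ds=\int_{K\cap\Omega_i}\nabla\cdot(v^2\phi)\,dx$ with $\phi(x)=x-z_0$ and $\phi\cdot\nu\ge r_0$ on the boundary; and finally obtain the polynomial bound from the inverse inequality on the whole simplex $K$ combined with $\lVert p\rVert_{0,K}\apprle\lVert p\rVert_{0,K\cap\Omega_i}$, which is uniform in the interface position precisely because the cut piece contains the comparable ball. (Wu--Xiao phrase the trace step as integration of $v^2$ along transversal segments of length $\sim h_K$ running from $\Gamma_K$ into $K\cap\Omega_i$, which is the same device in a different guise.) The outline is sound; the one step you should not leave at the level of ``for $h_0$ small this persists'' is the star-shapedness of the \emph{curved} piece. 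What is needed there is the quantitative transversality estimate: along the segment from $z_0$ to any point of $\Gamma_K$, the signed distance to the graph $\Gamma_K$ is strictly monotone, because the segment's velocity component normal to the chord plane is $\gtrsim h_K$ per unit parameter while the graph over $\Gamma_{K,h}$ has slope $O(\gamma h_K)$ and height $O(\gamma h_K^2)$. This single computation is what simultaneously guarantees that the shrunken inball survives the passage from the flat cut to the true cut, that every ray from the ball meets $\Gamma_K$ at most once (so no segment leaves $\Omega_i$), and hence that $\phi\cdot\nu$ is bounded below on all of $\partial(K\cap\Omega_i)$ including $\Gamma_K$; once it is written out, the rest of your argument is routine.
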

\begin{rem}
	We note that the condition $h\in(0,h_0]$ for some $h_0$ in this lemma  is not required when $\Gamma_{K}$ is a straight line/plane segment, and this condition  is easy to satisfy when $\Gamma_{K}$ is a curved line/surface segment.
\end{rem}

	For $i=1,2$,  set $\tilde{\Omega}_i: = \{ \cup\bar{K}: K\cap \Omega_i\neq \emptyset, \ K\in \mathcal{T}_h\}$,
	and introduce  extension operators $E_i: H^s(\Omega_i)\rightarrow H^s(\tilde{\Omega}_i)$, with   integer $s\geq1$, such that 
	\begin{align*}
		(E_iw)|_{\Omega_i} = w \quad \text{and} \quad  \| E_iw\|_{H^s(\tilde{\Omega}_i)}\apprle \| w\|_{H^s(\Omega_i)}, \ \forall w\in H^s(\Omega_i).
	\end{align*}
	
	
	For any $v\in H^1(\Omega_1\cup \Omega_2)$ and $ i = 1,2$,  set  $  \tilde{v}_i :=E_i(v|_{\Omega_i}) $,  and define $Q_r v$ and  $Q_r^b v$, with  integer $r\geq 0$, by
		\begin{align} 
		&(Q_rv)|_K: = \chi_1\Pi_{r}(\tilde{v}_1|_{K})+\chi_2\Pi_{r}(\tilde{v}_2|_{K}), \quad  \forall K\in \mathcal{T}_h,\label{QrQrb-1}\\
	& (Q_r^bv)|_F: = \chi_1\Pi_{r}^b(\tilde{v}_1|_{F})+\chi_2\Pi_{r}^b(\tilde{v}_2|_{F}),\quad F\in \varepsilon_h^*. \label{QrQrb-2}
\end{align}	 
Here	 
	  $\Pi_{r}: L^2(K)\rightarrow P_r(K)$ denotes the standard $L^2$ orthogonal projection operator, and we  recall that $\Pi_{r}^b$ is the   $L^2$ orthogonal projection operator  from  $L^2(F) $ onto $P_r(F)$. 
Notice that  that if  $K\cap \Omega_i = K$ and $F\cap \Omega_i = F$, then  $$(Q_rv)|_K: = \Pi_{r}(v|_{K}), \quad (Q_r^bv)|_F: = \Pi_{r}^b(v|_{F}).$$ 
	
	Vector or tensor analogues of $Q_{r}$ and $Q_{r}^b$ are denoted by $\bm{Q}_{r}$ and $\bm{Q}_{r}^b$, respectively.

Based on  Lemma \ref{trace} and standard  properties of the $L^2$ projection operators, 
we have the following conclusion.

\begin{lem}\label{ineq} 
	Let $s$ be an integer with $1\leq s\leq r+1$. For any $K\in \mathcal{T}_h$, $h\in (0, h_0]$ and $ v\in H^s\left(\Omega_{1}\cup \Omega_{2} \right)$, we have
	\begin{align}
   \sum_{K\in\mathcal{T}_h}\lVert v-Q_{r}v\rVert_{0,K}^2+h^2\sum_{K\in\mathcal{T}_h}\lVert v-Q_{r}v\rVert_{1,K}^2&\apprle h^{2s}
   \lVert v\rVert_{s,\Omega_1\cup \Omega_2}^2,\label{Q1} \\
	\sum_{K\in\mathcal{T}_h}\lVert v-Q_{r}v\rVert_{0,\partial K}^2+\sum_{K\in\mathcal{T}_h^{\Gamma}}\lVert v-Q_{r}v\rVert_{0,\Gamma_{K}}^2&\apprle h^{2s-1}
	\lVert v\rVert_{s,\Omega_1\cup \Omega_2}^2, \label{Q2}\\ 
	\sum_{K\in\mathcal{T}_h}\lVert v-Q_{r}^bv\rVert_{0,\partial K} ^2
	&\apprle h^{2s-1}
	\lVert v\rVert_{s,\Omega_1\cup \Omega_2}^2,\label{Q3}
	\end{align}
	where the notations $\lVert \cdot\rVert_{s,K} $ and $\lVert\cdot\rVert_{0,\partial K}$ for $K\in  \mathcal{T}_h^\Gamma$ are understood respectively  as   $\lVert \cdot\rVert_{s,K}^2 = \sum\limits_{i=1}^2\lVert \cdot\rVert_{s,K\cap\Omega_{i}}^2$ and $\lVert \cdot\rVert_{s,\partial K}^2 = \sum\limits_{i=1}^{2}\lVert \cdot\rVert_{s,\partial K\cap \bar \Omega_{i}}^2$. 
\end{lem}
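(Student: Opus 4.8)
The plan is to prove the three estimates \eqref{Q1}--\eqref{Q3} by reducing each one, element by element, to standard polynomial approximation estimates for the $L^2$ projections $\Pi_r$ and $\Pi_r^b$ applied to the Sobolev extensions $\tilde v_i = E_i(v|_{\Omega_i})$, and then using Lemma \ref{trace} to control the contributions coming from the interface pieces $\Gamma_K$ and from the parts of $\partial K$ cut by $\Gamma$. The key observation is that for a non-interface element $K$ (with $K\cap\Omega_i = K$) the operators $Q_r$, $Q_r^b$ coincide with the ordinary projections $\Pi_r$, $\Pi_r^b$ acting on $v|_K = \tilde v_i|_K$, so on $\mathcal T_h\setminus\mathcal T_h^\Gamma$ the three bounds are just the classical estimates $\|w-\Pi_r w\|_{0,K} + h_K\|w-\Pi_r w\|_{1,K}\apprle h_K^s\|w\|_{s,K}$ and the trace version $\|w-\Pi_r w\|_{0,\partial K}^2 \apprle h_K^{2s-1}\|w\|_{s,K}^2$ (and likewise for $\Pi_r^b$ on faces), summed over $K$ and combined with the stability $\|\tilde v_i\|_{s,\tilde\Omega_i}\apprle \|v\|_{s,\Omega_i}$ of the extension. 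So the whole content is concentrated on the finitely-overlapping strip of interface elements $\mathcal T_h^\Gamma$.

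For an interface element $K\in\mathcal T_h^\Gamma$, I would argue separately on $K_i := K\cap\Omega_i$ for $i=1,2$. On $K_i$ we have $(v-Q_rv)|_{K_i} = (\tilde v_i - \Pi_r\tilde v_i)|_{K_i}$, so $\|v-Q_rv\|_{0,K_i}\le \|\tilde v_i-\Pi_r\tilde v_i\|_{0,K} \apprle h_K^s\|\tilde v_i\|_{s,K}$ and similarly for the $H^1$ seminorm, giving \eqref{Q1} once we sum over $i$, over $K\in\mathcal T_h^\Gamma$, and use the finite-overlap property of $\{\tilde\Omega_i\}$ together with the extension stability. For \eqref{Q2} the boundary term $\|v-Q_rv\|_{0,\partial K}$ splits into faces lying in $\bar\Omega_i$, handled by the standard polynomial trace estimate for $\tilde v_i-\Pi_r\tilde v_i$; the genuinely new ingredient is the term $\|v-Q_rv\|_{0,\Gamma_K} = \|\tilde v_i-\Pi_r\tilde v_i\|_{0,\Gamma_K}$, which I bound using the first inequality of Lemma \ref{trace} applied to $w := \tilde v_i - \Pi_r\tilde v_i \in H^1(K\cap\Omega_i)$ (valid for $h\le h_0$):
\begin{align*}
\|w\|_{0,\Gamma_K}^2 \apprle h_K^{-1}\|w\|_{0,K_i}^2 + \|w\|_{0,K_i}\,\|\nabla w\|_{0,K_i} \apprle h_K^{-1}h_K^{2s}\|\tilde v_i\|_{s,K}^2 + h_K^s\|\tilde v_i\|_{s,K}\cdot h_K^{s-1}\|\tilde v_i\|_{s,K},
\end{align*}
and both terms are $\apprle h_K^{2s-1}\|\tilde v_i\|_{s,K}^2$. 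Summing and using the finite overlap and extension stability yields \eqref{Q2}. Estimate \eqref{Q3} is the face analogue: on a face $F\subset\partial K$, either $F\cap\Omega_i=F$ for one $i$ and $(Q_r^bv)|_F=\Pi_r^b(\tilde v_i|_F)$, or $F$ is cut by $\Gamma$, in which case on each piece $F\cap\bar\Omega_i$ we have $(v-Q_r^bv)=\tilde v_i-\Pi_r^b\tilde v_i$ and we invoke the standard face trace-approximation estimate $\|\tilde v_i-\Pi_r^b\tilde v_i\|_{0,F}^2\apprle h_F^{2s-1}\|\tilde v_i\|_{s,K}^2$ (note $\Pi_r^b$ is the $L^2(F)$ projection, so no inverse-trace inequality on the cut region is needed); sum over $F$, $K$, $i$.

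The main obstacle, and the only place genuine care is required, is the interface piece $\Gamma_K$ in \eqref{Q2}: one must not try to bound $\|\tilde v_i-\Pi_r\tilde v_i\|_{0,\Gamma_K}$ by a naive discrete trace inequality (the constant would then depend on how $\Gamma$ cuts $K$, i.e. on the position of the interface relative to the mesh, which is exactly what the $\apprle$-convention forbids). The correct route is Lemma \ref{trace}, whose constants are robust in the cut position, applied to the \emph{error function} $\tilde v_i-\Pi_r\tilde v_i$ rather than to $v$ itself — this is why the extension operators $E_i$ and the enlarged patches $\tilde\Omega_i$ are introduced, so that $\tilde v_i$ is an honest $H^s$ function on all of $K$ and the polynomial approximation estimates are available there. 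A secondary technical point is controlling the multiplicity of overlaps: each $\bar K$ with $K\cap\Omega_i\ne\emptyset$ appears in $\tilde\Omega_i$, and by shape-regularity the patches $\tilde\Omega_1,\tilde\Omega_2$ overlap only in a uniformly bounded number of layers of elements, so $\sum_{K\in\mathcal T_h^\Gamma}\|\tilde v_i\|_{s,K}^2\apprle \|\tilde v_i\|_{s,\tilde\Omega_i}^2\apprle \|v\|_{s,\Omega_i}^2$. Everything else is a routine assembly of per-element Bramble--Hilbert estimates, so I would state those as "standard properties of the $L^2$ projection" and keep the written proof focused on the $\Gamma_K$ term and the overlap bookkeeping.
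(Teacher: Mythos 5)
Your proposal is correct and follows essentially the same route as the paper: bound the errors elementwise via the extensions $\tilde v_i=E_i(v|_{\Omega_i})$ and the standard $L^2$-projection estimates on the whole element $K$, then treat $\partial K$ and $\Gamma_K$ with the (multiplicative) trace inequality and Lemma \ref{trace} applied to the error function, and finally sum using the stability of $E_i$ and the bounded overlap of the patches. The only cosmetic slip is that in your displayed estimate for $\|w\|_{0,\Gamma_K}$ Lemma \ref{trace} is only guaranteed for \emph{one} of the two indices, so the right-hand side should use $\|w\|_{0,K}$ (available precisely because $w=\tilde v_i-\Pi_r\tilde v_i$ lives on all of $K$) rather than $\|w\|_{0,K_i}$ --- which is exactly how the paper writes it, and your subsequent bound by $h_K^s\|\tilde v_i\|_{s,K}$ already uses the full-element norm, so nothing breaks.
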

\begin{proof}
	From the definition of   $Q_r$ and the properties of the extensive operator $E_i$ and the projection $\Pi_r$ it follows
	\begin{align*}
		\sum_{K\in\mathcal{T}_h}\lVert v-Q_{r}v\rVert_{0,K}^2+ h^2 \sum_{K\in\mathcal{T}_h}\lVert v-Q_{r}v\rVert_{1,K}^2
		\leq & \sum_{i=1}^{2}\sum_{K\in\mathcal{T}_h,K\cap\tilde{\Omega}_i\neq \emptyset}(\lVert E_iv-\Pi_r E_iv\rVert_{0,K}^2+h^2\lVert E_iv-\Pi_r E_iv\rVert_{1,K}^2 )\\
		\apprle & \sum_{i=1}^{2}\sum_{K\in\mathcal{T}_h,K\cap\tilde{\Omega}_i\neq \emptyset}h_K^{2s}\lVert E_iv\rVert_{s,K}^2 \\
		\apprle & 
		 h^{2s}
		\lVert v\rVert_{s,\Omega_1\cup \Omega_2}^2,
	\end{align*}
which yields \eqref{Q1}. 
Similarly, 
by  the trace inequality and Lemma \ref{trace}   we   get 
	\begin{align*}
	\sum_{K\in\mathcal{T}_h}\lVert v-Q_{r}v\rVert_{0,\partial K}^2+  \sum_{K\in\mathcal{T}_h^{\Gamma}}\lVert v-Q_{r}v\rVert_{0,\Gamma_K}^2
	\apprle & h^{-1}\sum_{i=1}^{2}\sum_{K\in\mathcal{T}_h,K\cap\tilde{\Omega}_i\neq \emptyset}(\lVert E_iv-\Pi_r E_iv\rVert_{0,K}^2+h\lVert E_iv-\Pi_r E_iv\rVert_{1,K}^2 )\\
	\apprle & 
		 h^{2s-1}
		\lVert v\rVert_{s,\Omega_1\cup \Omega_2}^2
\end{align*}
and 
\begin{align*}
	\sum_{K\in\mathcal{T}_h}\lVert v-Q_{r}^bv\rVert_{0,\partial K}^2
	\leq&  \sum_{i=1}^{2}\sum_{K\in\mathcal{T}_h,K\cap\tilde{\Omega}_i\neq \emptyset}\lVert E_iv-\Pi_r^b E_iv\rVert_{0,\partial K}^2 	\\
	\apprle& h^{2s-1}\sum_{i=1}^{2}\sum_{K\in\mathcal{T}_h,K\cap\tilde{\Omega}_i\neq \emptyset}\lVert E_iv\rVert_{s, K}^2 \\
	 \apprle& h^{2s-1}
	 \lVert v\rVert_{s,\Omega_1\cup \Omega_2}^2. 
\end{align*} 
This completes the proof.
\end{proof}
\begin{rem}
	In fact, for $K\in\mathcal{T}_h$ with  $K\cap \Omega_i = K$ ($i = 1$ or $2$) and $ v\in H^s(K)$, it is easy to see that 
		\begin{align*}
		\lVert v-Q_{r}v\rVert_{0,K}+h\lVert v-Q_{r}v\rVert_{1,K}&\apprle h^{s}\lVert v\rVert_{s,K}, \\
		\lVert v-Q_{r}v\rVert_{0,\partial K}+\lVert v-Q_{r}^bv\rVert_{0,\partial K} &\apprle h^{s-1/2}\lVert v\rVert_{s,K}.
	\end{align*}
\end{rem}

\subsection{Error estimation for stress and displacement approximations}
Let $(\bm{\sigma},\bm{u})$ be the solution of \eqref{pb1}.  For simplicity of presentation, we define
\begin{align}\label{erroroperator}
\bm{e}_h^{\sigma}: = \bm{Q}_{k-1}\bm{\sigma} - \bm{\sigma}_h , \quad  \bm{e}_h^u: =\bm{Q}_{k}\bm{u} - \bm{u}_h,  \quad  
\bm{e}_h^{\hat{u}}: =\bm{Q}_{k}^b\bm{u} -  \bm{\hat{u}}_h , \quad  \bm{e}_h^{\tilde{u}}: = \bm{Q}_{k}^{\Gamma}\bm{u} - \bm{\tilde{u}}_h.
\end{align}
Here, for  any $ F\in\varepsilon_h^{\Gamma}$,
\begin{align}
(\bm{Q}_{k}^{\Gamma}\bm{u})|_F:=&\left\{ \begin{array}{ll}
\bm{\Pi}_{k}^b(\bm{u}|_{F}), \  \text{if } \ F \text{ is a straight line/plane  segment};\\
 \begin{array}{ll} 
 \frac{1}{2}(\bm{\Pi}_{k}(\tilde{\bm{u}}_1|_{K} )|_{F}+\bm{\Pi}_{k}(\tilde{\bm{u}}_2|_{K} )|_{F}),  &\text{if }  F=F_K
=F\cap K
   \text{ is not a straight} \\
  & \text{ line/plane  segment}  \text{ for   $K\in \mathcal{T}_h^\Gamma$},
  \end{array}
\end{array}\right. \label{Q^Ga}
\end{align} 
where  $ \tilde{\bm{u}}_i:= E_i(\bm{u}|_{\Omega_i}), \ i = 1,2$. 

We have the following lemma on error equations.
\begin{lem}
	For all $(\bm{w},\bm{v},\bm{\mu},\bm{\tilde{\mu}})\in \bm{W}_h\times \bm{V}_h\times \bm{M}_h(\bm{0})\times \bm{\tilde{M}}_h$, it holds
\begin{subequations}\label{err_eqs}
	\begin{align}
	(\mathcal{A} \bm{e}_h^{\sigma},\bm{w})_{\mathcal{T}_h} + (\bm{e}_h^{u},\nabla_h\cdot \bm{w})_{\mathcal{T}_h} - \langle \bm{e}_h^{\hat{u}},\bm{w}\bm{n}\rangle_{\partial\mathcal{T}_h\setminus \mathcal{\varepsilon}_h^{\Gamma}} -\langle \bm{e}_h^{\tilde{u}},\bm{w}\bm{n}\rangle_{*,\Gamma} =& L_1(\bm{w}), \label{err_eq1}\\
	-(\nabla_h\cdot\bm{e}_h^{\sigma},\bm{v})_{\mathcal{T}_h}+ \langle \tau(\bm{e}_h^{u}-\bm{e}_h^{\hat{u}}),\bm{v}\rangle_{\partial\mathcal{T}_h\setminus \mathcal{\varepsilon}_h^{\Gamma}} 
	+  \langle \eta(\bm{e}_h^{u}-\bm{e}_h^{\tilde{u}}),\bm{v}\rangle_{*,\Gamma}  =& L_2(\bm{v}) +L_3(\bm{v})+
L_4(\bm{v}) ,\label{err_eq2}\\
	\langle \bm{e}_h^{\sigma} \bm{n},\bm{\hat{\mu}}\rangle_{\partial \mathcal{T}_h\setminus \mathcal{\varepsilon}_h^{\Gamma}} -\langle \tau(\bm{e}_h^{u}-\bm{e}_h^{\hat{u}}),\bm{\hat{\mu}}\rangle_{\partial \mathcal{T}_h\setminus\mathcal{\varepsilon}_h^{\Gamma}} =& -L_2(\bm{\hat{\mu}}) ,\label{err_eq3}\\
	\langle \bm{e}_h^{\sigma} \bm{n},\bm{\tilde{\mu}}\rangle_{*,\Gamma} - \langle \eta(\bm{e}_h^{u}-\bm{e}_h^{\tilde{u}}),\bm{\tilde{\mu}}\rangle_{*,\Gamma}=& -L_3(\bm{\tilde{\mu}}) ,\label{err_eq4}
	\end{align}
\end{subequations}
where 
\begin{align*}
L_1(\bm{w}) &:= \langle \bm{u}-\bm{Q}_{k}^{\Gamma}\bm{u},\bm{w}\bm{n} \rangle_{ *,\Gamma}
+(\bm{u}-\bm{Q}_{k}\bm{u},\nabla_h \cdot \bm{w})_{\mathcal{T}_h^{\Gamma}}+(\mathcal{A} (\bm{\sigma}-\bm{Q}_{k-1}\bm{\sigma}),\bm{w})_{\mathcal{T}_h^{\Gamma}}+\langle \bm{u}-\bm{Q}_{k}^b\bm{u},\bm{w} \bm{n} \rangle_{\partial\mathcal{T}_h^{\Gamma}\setminus\varepsilon_h^{\Gamma}} , \\
L_2(\bm{v}) &:= \langle (\bm{\sigma}-\bm{Q}_{k-1}\bm{\sigma})\bm{n},\bm{v} \rangle_{\partial \mathcal{T}_h}+\langle \tau (\bm{Q}_{k}\bm{u} - \bm{Q}_{k}^b\bm{u}),\bm{v} \rangle_{\partial \mathcal{T}_h\setminus \mathcal{\varepsilon}_h^{\Gamma}} , \\
L_3(\bm{v}) &:= \langle (\bm{\sigma} - \bm{Q}_{k-1}\bm{\sigma})\bm{n},\bm{v}\rangle_{*,\Gamma}+ \langle \eta (\bm{Q}_{k}\bm{u}-\bm{Q}_{k}^{\Gamma}\bm{u}),\bm{v} \rangle_{*,\Gamma},\\
L_4(\bm{v}) &:=   -(\bm{\sigma} - \bm{Q}_{k-1}\bm{\sigma},\bm{\epsilon}_h(\bm{v}  ))_{\mathcal{T}_h^{\Gamma}}  ,
\end{align*}
with 
$(\cdot,\cdot)_{\mathcal{T}_h^{\Gamma}}:=\sum\limits_{K\in  \mathcal{T}_h^{\Gamma}}(\cdot,\cdot)_K
$ and  $ \langle\cdot,\cdot\rangle_{\partial\mathcal{T}_h^{\Gamma}\setminus \mathcal{\varepsilon}_h^{\Gamma}}:=\sum\limits_{K\in  \mathcal{T}_h^{\Gamma}}\langle\cdot,\cdot\rangle_{\partial K\setminus \mathcal{\varepsilon}_h^{\Gamma}}$.

\end{lem}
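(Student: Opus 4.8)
The plan is to derive each of the four error equations by comparing the X-HDG scheme \eqref{xhdgscheme} with the consistency identities satisfied by the exact solution $(\bm\sigma,\bm u)$, after testing against the \emph{projected} quantities $\bm Q_{k-1}\bm\sigma$, $\bm Q_k\bm u$, $\bm Q_k^b\bm u$, $\bm Q_k^\Gamma\bm u$. The strategy for each equation is identical: write the left-hand side with $\bm e_h^\sigma,\bm e_h^u,\bm e_h^{\hat u},\bm e_h^{\tilde u}$ replaced by their definitions \eqref{erroroperator}, so that the discrete solution terms are exactly the corresponding left-hand side of \eqref{xhdgscheme}; substitute the scheme to eliminate those; and then rewrite the remaining projection terms, using the exact equations \eqref{pb1} tested suitably, so that everything collapses to the claimed functionals $L_1,\dots,L_4$. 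The orthogonality properties of $\bm Q_{k-1},\bm Q_k,\bm Q_k^b$ against polynomials of the appropriate degree are what make most cross terms on non-interface elements vanish, which is why the residual functionals $L_i$ only involve sums over $\mathcal T_h^\Gamma$, $\partial\mathcal T_h^\Gamma$, $\partial\mathcal T_h$ and $\Gamma$.

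First I would establish the consistency of the exact solution. Multiplying $\mathcal A\bm\sigma-\bm\epsilon(\bm u)=\bm 0$ by $\bm w\in\bm W_h$, integrating by parts on each $K$, and using $\bm u\in H^1$ (single-valued traces) together with $\llbracket\bm u\rrbracket=\bm 0$ on $\Gamma$, gives
$(\mathcal A\bm\sigma,\bm w)_{\mathcal T_h}+(\bm u,\nabla_h\cdot\bm w)_{\mathcal T_h}-\langle\bm u,\bm w\bm n\rangle_{\partial\mathcal T_h\setminus\varepsilon_h^\Gamma}-\langle\bm u,\bm w\bm n\rangle_{*,\Gamma}=0$;
similarly $\nabla\cdot\bm\sigma=\bm f$ and the continuity of $\bm\sigma\bm n$ across interior faces, plus $\bm\sigma\bm n=\bm g_N$ on $\partial\Omega_N$ and $\llbracket\bm\sigma\bm n\rrbracket=\bm g_N^\Gamma$ on $\Gamma$, give the companion identities matching \eqref{xhdg2}--\eqref{xhdg4} with $\bm u_h,\hat{\bm u}_h,\tilde{\bm u}_h$ replaced by $\bm u$ and $\bm\sigma_h$ by $\bm\sigma$ (the stabilization terms drop because $\bm u-\bm u=\bm 0$ and $\bm u-\bm u=\bm 0$ on $\Gamma$ in the single-valued sense). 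Subtracting the scheme from these identities, and then inserting $\pm\bm Q_{k-1}\bm\sigma$, $\pm\bm Q_k\bm u$, $\pm\bm Q_k^b\bm u$, $\pm\bm Q_k^\Gamma\bm u$, converts the $(\bm\sigma-\bm\sigma_h)$, $(\bm u-\bm u_h)$ etc.\ differences into the projection-error parts (which become $L_i$) plus the $\bm e_h^\bullet$ parts. For \eqref{err_eq1} one uses $L^2$-orthogonality of $\bm Q_{k-1}$ against $\bm W_h$ and of $\bm Q_k$ against $\nabla_h\cdot\bm W_h\subset$ (tensor) $\bm P_{k-1}$, and of $\bm Q_k^b$ against $\bm M_h$ on non-interface faces, so only the interface-element and interface-face residuals survive, which is precisely $L_1$. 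For \eqref{err_eq2}--\eqref{err_eq4} the extra term $L_4$ appears because on interface elements $\bm\epsilon_h(\bm v)\notin\bm P_{k-1}$ has no orthogonality with $\bm\sigma-\bm Q_{k-1}\bm\sigma$; on non-interface elements $\bm\epsilon(\bm v)\in\bm P_{k-1}$ so that contribution is absent.

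The main obstacle I expect is the careful bookkeeping of the interface/Neumann boundary terms and the distinction between the ``$\ast,\Gamma$'' pairing (two-sided, with $\bm n_1$ and $\bm n_2$) and the ordinary skeleton pairing. In particular, handling $\bm Q_k^\Gamma\bm u$ correctly requires using that $\bm u$ is single-valued on $\Gamma$ so that $\langle\bm u,\bm w\bm n\rangle_{*,\Gamma}=\langle\bm u,(\bm w_1\bm n_1+\bm w_2\bm n_2)\rangle$ and that for straight interface pieces $\bm Q_k^\Gamma=\bm\Pi_k^b$ is orthogonal to $\bm P_k(F)\supset\tilde{\bm M}_h|_F$, while for curved pieces the definition \eqref{Q^Ga} via the average of element projections must be tracked so that $\langle\bm u-\bm Q_k^\Gamma\bm u,\cdot\rangle_{*,\Gamma}$ lands in $L_1$. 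The sign conventions in \eqref{err_eq3}--\eqref{err_eq4} (the $-L_2$, $-L_3$) come from testing the exact consistency identity for the third/fourth equations with $\hat{\bm\mu},\tilde{\bm\mu}$ and noting that the exact solution's ``numerical trace'' residual equals the projection residual with opposite sign; one must check these signs against the definitions of $L_2,L_3$ above. Once the consistency identities and orthogonalities are set up, the remaining manipulations are routine term-matching.
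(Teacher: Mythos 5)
Your proposal is correct and follows essentially the same route as the paper: establish the consistency identities satisfied by the exact solution (via integration by parts and the single-valuedness of $\bm{u}$ and $\bm{\sigma}\bm{n}$), insert the projections $\bm{Q}_{k-1}\bm{\sigma}$, $\bm{Q}_{k}\bm{u}$, $\bm{Q}_{k}^b\bm{u}$, $\bm{Q}_{k}^{\Gamma}\bm{u}$, use the $L^2$-orthogonality of the projections to kill all residuals except those on interface elements/faces, and subtract the scheme \eqref{xhdgscheme}. Your identification of why $L_4$ survives only on $\mathcal{T}_h^{\Gamma}$ and of the signs in \eqref{err_eq3}--\eqref{err_eq4} matches the paper's argument.
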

\begin{proof}
	From the definitions of the operators $\bm{Q}_{k-1}, \bm{Q}_{k}, \bm{Q}_{k}^b$ and $\bm{Q}_{k}^{\Gamma}$,  we   obtain   for any $(\bm{w},\bm{v})\in \bm{W}_h\times \bm{V}_h$ that
		\begin{align*}
		&(\mathcal{A} \bm{Q}_{k-1}\bm{\sigma},\bm{w})_{\mathcal{T}_h} + (\bm{Q}_{k}\bm{u},\nabla_h\cdot \bm{w})_{\mathcal{T}_h} - \langle \bm{Q}_{k}^b\bm{u},\bm{w}\bm{n}\rangle_{\partial\mathcal{T}_h\setminus \mathcal{\varepsilon}_h^{\Gamma}} -\langle \bm{Q}_{k}^{\Gamma}\bm{u},\bm{w}\bm{n}\rangle_{*,\Gamma} \\
		= &\langle \bm{u}- \bm{Q}_{k}^{\Gamma}\bm{u},\bm{w}\bm{n}\rangle_{*,\Gamma}+	 (\bm{u}-\bm{Q}_{k}\bm{u},\nabla_h\cdot \bm{w})_{\mathcal{T}_h^{\Gamma}}+(\mathcal{A} (\bm{\sigma}-\bm{Q}_{k-1}\bm{\sigma}),\bm{w})_{\mathcal{T}_h^{\Gamma}}+\langle \bm{u}-\bm{Q}_{k}^b\bm{u},\bm{w} \bm{n} \rangle_{\partial\mathcal{T}_h^{\Gamma}\setminus\varepsilon_h^{\Gamma}} \\ 
		&(\nabla_h\cdot\bm{Q}_{k-1}\bm{\sigma},\bm{v})_{\mathcal{T}_h}-  \langle \bm{Q}_{k-1}\bm{\sigma}-\bm{\sigma},\bm{v}\rangle_{\partial\mathcal{T}_h\setminus \mathcal{\varepsilon}_h^{\Gamma}} 
		-  \langle \bm{Q}_{k-1}\bm{\sigma}-\bm{\sigma},\bm{v}\rangle_{*,\Gamma}\\
		=& (\bm{f},\bm{v}) +(\bm{\sigma}-\bm{Q}_{k-1}\bm{\sigma},\bm{\epsilon}_h(  \bm{v}))_{\mathcal{T}_h^{\Gamma}} .
		\end{align*}
		 Subtracting \eqref{xhdg1} and \eqref{xhdg2} from the above two equations respectively yields   \eqref{err_eq1} and \eqref{err_eq2}. And \eqref{err_eq3} and \eqref{err_eq4} follow from \eqref{xhdg3}, \eqref{xhdg4} and the relations
		\begin{align*}
		\langle  \bm{\sigma}\bm{n} ,\bm{\tilde{\mu}} \rangle_{*,\Gamma} = \langle \bm{g}_N^{\Gamma},\bm{\tilde{\mu}}  \rangle_{*,\Gamma},  \quad 
		\langle \bm{\sigma}\bm{n},\bm{\hat{\mu}} \rangle_{\partial \mathcal{T}_h\setminus\mathcal{\varepsilon}_h^{\Gamma}} = \langle \bm{g}_N,\bm{\hat{\mu}}\rangle_{\partial \mathcal{T}_h\setminus\mathcal{\varepsilon}_h^{\Gamma}}.
		\end{align*}
\end{proof}

Let us define 
\begin{align*}
&\lVert \cdot \rVert_{\mathcal{A},\mathcal{T}_h}: = (\mathcal{A}\cdot,\cdot)_{\mathcal{T}_h}^{\frac12}, \quad 
\lVert \cdot\rVert_{0,\mathcal{T}_h} := (\cdot,\cdot)_{\mathcal{T}_h}^{\frac{1}{2}},\quad \lVert \cdot \rVert_{0,\partial\mathcal{T}_h\setminus \mathcal{\varepsilon}_h^{\Gamma}} :=  \langle\cdot,\cdot\rangle_{\partial\mathcal{T}_h\setminus \mathcal{\varepsilon}_h^{\Gamma}}^{\frac{1}{2}},\\
&\lVert \cdot \rVert_{*,\Gamma} := \langle \cdot,\cdot\rangle_{*,\Gamma}^{\frac{1}{2}},  \quad \lVert \cdot\rVert_{0,\mathcal{T}_h^{\Gamma}} := (\cdot,\cdot)_{\mathcal{T}_h^{\Gamma}}^{\frac{1}{2}},
\end{align*}
and introduce a semi-norm $\interleave \cdot\interleave :  (\bm{w},\bm{v},\bm{\mu},\bm{\tilde{\mu}})\in \bm{W}_h\times \bm{V}_h\times \bm{M}_h\times \bm{\tilde{M}}_h\rightarrow  \mathbb{R}$ with 
\begin{align}\label{energynorm}
\interleave (\bm{w},\bm{v},\bm{\hat{\mu}},\bm{\tilde{\mu}})\interleave : = (\lVert \bm{w}\rVert_{\mathcal{A},\mathcal{T}_h}^2 +\lVert \tau^{\frac{1}{2}}(\bm{v}-\bm{\hat{\mu}})\rVert_{0,\partial\mathcal{T}_h\setminus \mathcal{\varepsilon}_h^{\Gamma}}^2+\lVert \eta^{\frac{1}{2}}(\bm{v}-\bm{\tilde{\mu}})\rVert_{*,\Gamma}^2)^{\frac{1}{2}} .
\end{align}
	

\begin{lem}\label{est_energy_1}
		Let $(\bm{\sigma},\bm{u})\in 
		\bm{H^{k}}(\Omega_1\cup \Omega_2,S)\times \bm{H^{k+1}}(\Omega_1\cup \Omega_2) 
		$ and $(\bm{\sigma}_h,\bm{u}_h,\hat{\bm{u}}_h,\tilde{\bm{u}}_h)\in \bm{W}_h\times \bm{V}_h\times \bm{M}_h(\bm{g}_D)\times \bm{\tilde{M}}_h$ be the solutions of the  problem (\ref{pb1}) and the X-HDG scheme (\ref{xhdgscheme}), respectively.  For any $h \in (0, h_0]$, it holds
	\begin{align} %
	&\interleave (\bm{e}_h^{\sigma},\bm{e}_h^u,\bm{e}_h^{\hat{u}} ,\bm{e}_h^{\tilde{u}}) \interleave^2 = \sum_{i=1}^{4}E_i, \label{est_1} \\
	&\lVert  \sqrt{\mu}\bm{\epsilon}_h(\bm{e}_h^{u})\rVert_{0,\mathcal{T}_h} \apprle 
	\interleave  (\bm{e}_h^{\sigma},\bm{e}_h^{u},\bm{e}_h^{\hat{u}},\bm{e}_h^{\tilde{u}})\interleave+\lVert \frac{1}{\sqrt{\mu}}\bm{e}_h^{\sigma}\rVert_{0,\mathcal{T}_h}  +h^k(\lVert \sqrt{\mu} \bm{u}\rVert_{k+1,\Omega_1\cup \Omega_2} +\lVert \frac{1}{\sqrt{\mu}} \bm{\sigma} \rVert_{k,\Omega_1\cup \Omega_2}) , \label{est_2} \quad 
	\end{align}
	where 
	\begin{align*}
	E_1 &:= \langle (\bm{u}-\bm{Q}_{k}^{\Gamma}\bm{u}),\bm{e}_h^{\sigma}\bm{n} \rangle_{ *,\Gamma}
	 +(\bm{u}-\bm{Q}_{k}\bm{u},\nabla_h\cdot  \bm{e}_h^{\sigma})_{\mathcal{T}_h^{\Gamma}}+(\mathcal{A} (\bm{\sigma}-\bm{Q}_{k-1}\bm{\sigma}),\bm{e}_h^{\sigma})_{\mathcal{T}_h^{\Gamma}}+\langle \bm{u}-\bm{Q}_{k}^b\bm{u},\bm{e}_h^{\sigma}\bm{n} \rangle_{\partial\mathcal{T}_h^{\Gamma}\setminus\varepsilon_h^{\Gamma}}  ,\\
	E_2 &:= \langle (\bm{\sigma} - \bm{Q}_{k-1}\bm{\sigma})\bm{n},\bm{e}_h^u-\bm{e}_h^{\hat{u}} \rangle_{\partial\mathcal{T}_h\setminus \mathcal{\varepsilon}_h^{\Gamma}}+\langle (\bm{\sigma} - \bm{Q}_{k-1}\bm{\sigma})\bm{n},\bm{e}_h^u-\bm{e}_h^{\tilde{u}} \rangle_{*,\Gamma},\\
	E_3 &:= \langle \tau (\bm{Q}_{k}\bm{u} - \bm{Q}_{k}^b\bm{u}),\bm{e}_h^u-\bm{e}_h^{\hat{u}} \rangle_{\partial\mathcal{T}_h\setminus \mathcal{\varepsilon}_h^{\Gamma}} +\langle \eta (\bm{Q}_{k}\bm{u} - \bm{Q}_{k}^{\Gamma}\bm{u}),\bm{e}_h^u-\bm{e}_h^{\tilde{u}} \rangle_{*,\Gamma}, \\
	E_4&=:   -(\bm{\sigma} - \bm{Q}_{k-1}\bm{\sigma},\bm{\epsilon}_h(\bm{e}_h^{u}) )_{\mathcal{T}_h^{\Gamma}} .
	\end{align*}

\end{lem}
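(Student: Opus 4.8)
The plan is to derive the energy-norm identity \eqref{est_1} by testing the error equations \eqref{err_eqs} against the errors themselves, and then to establish the strain bound \eqref{est_2} by a separate inf-sup-type argument exploiting the stress space. For the first part, I would take $(\bm{w},\bm{v},\bm{\hat\mu},\bm{\tilde\mu})=(\bm{e}_h^{\sigma},\bm{e}_h^u,\bm{e}_h^{\hat u},\bm{e}_h^{\tilde u})$ in \eqref{err_eq1}--\eqref{err_eq4}; this is legitimate since $\bm{\hat u}_h\in\bm{M}_h(\bm{g}_D)$ and $\bm{Q}_k^b\bm{u}$ satisfies the same boundary constraint (using assumption (A4) so that $\bm{Q}_k^b\bm{u}|_{\partial\Omega_D}=\bm{\Pi}_k^b(\bm{g}_D)$ is consistent), hence $\bm{e}_h^{\hat u}\in\bm{M}_h(\bm{0})$; similarly $\bm{e}_h^{\tilde u}\in\bm{\tilde M}_h$. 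Adding \eqref{err_eq1} with a sign flip to match, I would subtract \eqref{err_eq3} and \eqref{err_eq4} from \eqref{err_eq2} so that all the coupling terms $(\bm{e}_h^u,\nabla_h\cdot\bm{e}_h^\sigma)$, $(\nabla_h\cdot\bm{e}_h^\sigma,\bm{e}_h^u)$, $\langle\bm{e}_h^\sigma\bm n,\bm{e}_h^{\hat u}\rangle$, $\langle\bm{e}_h^\sigma\bm n,\bm{e}_h^{\tilde u}\rangle$ and $\langle\bm{e}_h^\sigma\bm n,\bm{e}_h^u\rangle$ cancel pairwise, leaving exactly
\begin{align*}
\|\bm{e}_h^\sigma\|_{\mathcal{A},\mathcal{T}_h}^2+\|\tau^{1/2}(\bm{e}_h^u-\bm{e}_h^{\hat u})\|_{0,\partial\mathcal{T}_h\setminus\varepsilon_h^\Gamma}^2+\|\eta^{1/2}(\bm{e}_h^u-\bm{e}_h^{\tilde u})\|_{*,\Gamma}^2 = L_1(\bm{e}_h^\sigma)+L_2(\bm{e}_h^u)-L_2(\bm{e}_h^{\hat u})+L_3(\bm{e}_h^u)-L_3(\bm{e}_h^{\tilde u})+L_4(\bm{e}_h^u),
\end{align*}
and then I would reorganize the right-hand side into the four groups $E_1,\dots,E_4$: $E_1=L_1(\bm{e}_h^\sigma)$ verbatim; $E_2$ collects the consistency (stress-flux) parts of $L_2(\bm{e}_h^u)-L_2(\bm{e}_h^{\hat u})$ and $L_3(\bm{e}_h^u)-L_3(\bm{e}_h^{\tilde u})$, using $\langle(\bm\sigma-\bm Q_{k-1}\bm\sigma)\bm n,\bm\mu\rangle_{\partial\mathcal{T}_h\setminus\varepsilon_h^\Gamma}$-type telescoping to turn the two terms into a single jump pairing against $\bm{e}_h^u-\bm{e}_h^{\hat u}$ (and likewise on $\Gamma$); $E_3$ collects the stabilization parts; $E_4=L_4(\bm{e}_h^u)$. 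The only subtlety here is the bookkeeping of which boundary pieces live on $\partial\mathcal{T}_h$ versus $\partial\mathcal{T}_h\setminus\varepsilon_h^\Gamma$ versus $*,\Gamma$, and checking that single-valuedness of $\bm{e}_h^{\hat u}$ across interior faces lets $\langle(\bm\sigma-\bm Q_{k-1}\bm\sigma)\bm n,\bm{e}_h^{\hat u}\rangle$ reduce to a sum over non-interface boundary faces only (the global stress flux is continuous so it pairs to something controllable, and on $\partial\Omega_D$ the error vanishes).

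For the strain estimate \eqref{est_2}, the idea is to go back to the ``integrated-by-parts'' version of \eqref{err_eq1}. Rewriting \eqref{err_eq1} using $(\bm{e}_h^u,\nabla_h\cdot\bm w)_{\mathcal{T}_h}=-(\bm\epsilon_h(\bm{e}_h^u),\bm w)_{\mathcal{T}_h}+\langle\bm{e}_h^u,\bm w\bm n\rangle_{\partial\mathcal{T}_h}$ (valid since $\bm w$ is symmetric, so $\bm w:\nabla\bm{e}_h^u=\bm w:\bm\epsilon(\bm{e}_h^u)$ elementwise), I would obtain
\begin{align*}
(\bm\epsilon_h(\bm{e}_h^u),\bm w)_{\mathcal{T}_h}=(\mathcal{A}\bm{e}_h^\sigma,\bm w)_{\mathcal{T}_h}+\langle\bm{e}_h^u-\bm{e}_h^{\hat u},\bm w\bm n\rangle_{\partial\mathcal{T}_h\setminus\varepsilon_h^\Gamma}+\langle\{\bm{e}_h^u-\bm{e}_h^{\tilde u}\}\cdot,\bm w\bm n\rangle_{*,\Gamma}+(\text{projection remainders from }L_1),
\end{align*}
where the interior face terms $\langle\bm{e}_h^{\hat u},\bm w\bm n\rangle$ over single-valued faces cancel. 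Then I would choose $\bm w$ inside $\bm W_h$ to be essentially $\bm\Pi_{k-1}$ (or $\bm Q_{k-1}$) applied to $\mu\,\bm\epsilon_h(\bm{e}_h^u)$ — note $\bm\epsilon_h(\bm{e}_h^u)$ is piecewise in $\bm P_{k-1}$ on non-interface elements and on the two sides of interface elements, so up to the projection it is admissible — giving $\|\sqrt\mu\,\bm\epsilon_h(\bm{e}_h^u)\|_{0,\mathcal{T}_h}^2$ on the left up to a harmless $\bm Q_{k-1}$-consistency error of order $h^k\|\sqrt\mu\bm u\|_{k+1}$. Bounding the right-hand side by Cauchy--Schwarz, discrete trace inequalities $h_K^{-1/2}$ from Lemma \ref{trace}, the definitions of $\tau,\eta$, and Lemma \ref{ineq} to control the $L_1$-type projection remainders (which contribute $h^k(\|\sqrt\mu\bm u\|_{k+1}+\|\mu^{-1/2}\bm\sigma\|_k)$), then absorbing the factor $\|\sqrt\mu\bm\epsilon_h(\bm{e}_h^u)\|_{0,\mathcal{T}_h}$ that appears on the right, yields \eqref{est_2}. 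The $\mu$-weighting is arranged so that $\|\mu^{-1/2}\bm{e}_h^\sigma\|_{0,\mathcal{T}_h}$ and $\interleave\cdot\interleave$ appear with the right powers.

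The main obstacle I anticipate is not the cancellation bookkeeping (tedious but routine) but rather the treatment of the interface terms $\langle\cdot,\cdot\rangle_{*,\Gamma}$ together with the curved-interface projection $\bm Q_k^\Gamma$: on interface elements $\bm{e}_h^u$, $\bm{e}_h^\sigma$ are only piecewise polynomial on $K_1,K_2$ whose common boundary is the \emph{curved} arc $\Gamma_K$, so the elementwise integration by parts on $K_i$ produces boundary contributions on both $\partial K\setminus\Gamma$ and on $\Gamma_K$, and one must verify that the $*,\Gamma$ pairings in the scheme were defined precisely to absorb the $\Gamma_K$ pieces with the correct one-sided normals $\bm n_i$. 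Keeping $\bm{\tilde u}_h$ two-valued on $\Gamma$ (with $\llbracket\bm{\tilde u}_h\rrbracket=\bm 0$ used at the end, exactly as in the proof of Theorem \ref{wellposed}) and making sure $\bm Q_k^\Gamma\bm u$ — which is the \emph{average} of the two one-sided $\bm\Pi_k$ projections in the curved case — interacts correctly with $\{\cdot\}$ in the $\eta$-term of $E_3$ and in the definition of $\interleave\cdot\interleave$ is the delicate point; this is where assumptions (A2)--(A3) on the smallness of the normal variation over $\Gamma_K$ and Lemma \ref{trace} must be invoked to keep all constants independent of how $\Gamma$ cuts the mesh.
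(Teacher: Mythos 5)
Your proposal is correct and follows essentially the same route as the paper: \eqref{est_1} is obtained by testing the error equations \eqref{err_eqs} with $(\bm{e}_h^{\sigma},\bm{e}_h^u,\bm{e}_h^{\hat{u}},\bm{e}_h^{\tilde{u}})$ and summing (with the signs as written in \eqref{err_eq1}--\eqref{err_eq4}, plain addition of all four equations --- not subtraction of \eqref{err_eq3}--\eqref{err_eq4} from \eqref{err_eq2} --- is what cancels the cross terms and produces the squares, and the identity you display is exactly the paper's $\sum_{i=1}^4 E_i$). For \eqref{est_2} the paper likewise tests \eqref{err_eq1} with $\bm{w}=2\mu\bm{\epsilon}_h(\bm{e}_h^{u})$, which already lies in $\bm{W}_h$ because $\bm{\epsilon}_h(\bm{e}_h^{u})$ is symmetric and piecewise in $\bm{P}_{k-1}$ on each (sub)element, so the $\bm{Q}_{k-1}$ projection you insert is the identity and the extra consistency error you budget for never arises; the remaining steps (elementwise integration by parts, Cauchy--Schwarz, the trace inequality, Lemma \ref{ineq} together with assumption (A4) for the $\bm{Q}_k^b$ term on $\partial\Omega_D$, and absorbing $\lVert\sqrt{\mu}\,\bm{\epsilon}_h(\bm{e}_h^{u})\rVert_{0,\mathcal{T}_h}$) are exactly the paper's argument.
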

\begin{proof}
	Taking $(\bm{w},\bm{v},\bm{\mu} ,\bm{\tilde{\mu}}) = (\bm{e}_h^{\sigma},\bm{e}_h^u,\bm{e}_h^{\hat{u}},\bm{e}_h^{\tilde{u}})$ in the  four equations in Lemma \ref{err_eqs} and adding up them 
yield	  \eqref{est_1}. Then it remains to show \eqref{est_2}. We only consider the case that the interface is not a piecewise line/plane  segment, since the other case is easier. 
%

Taking $\bm{w} = 2\mu\bm{\epsilon}_h(\bm{e}_h^{u})$ in \eqref{err_eq1} and applying integration by parts, we obtain
	\begin{align*}
	(\mathcal{A} \bm{e}_h^{\sigma},2\mu\bm{\epsilon}_h(\bm{e}_h^{u}))_{\mathcal{T}_h} - (\bm{\epsilon}_h(\bm{e}_h^{u}),2\mu\bm{\epsilon}_h(\bm{e}_h^{u}))_{\mathcal{T}_h} +\langle \bm{e}_h^{u}-\bm{e}_h^{\hat{u}},2\mu\bm{\epsilon}_h(\bm{e}_h^{u})\bm{n}\rangle_{\partial\mathcal{T}_h\setminus \mathcal{\varepsilon}_h^{\Gamma}} +\langle \bm{e}_h^{u}-\bm{e}_h^{\tilde{u}},2\mu\bm{\epsilon}_h(\bm{e}_h^{u})\bm{n}\rangle_{*,\Gamma} = L_1(2\mu\bm{\epsilon}_h(\bm{e}_h^{u})),
	\end{align*}
which, together with 	
	  the Cauchy-Schwarz inequality and the trace inequality,  implies
	\begin{align*}
	&\lVert \sqrt{2\mu}\bm{\epsilon}_h(\bm{e}_h^{u})\rVert_{0,\mathcal{T}_h}^2\\ 
	=& (\mathcal{A} \bm{e}_h^{\sigma},2\mu\bm{\epsilon}_h(\bm{e}_h^{u}))_{\mathcal{T}_h} +\langle \bm{e}_h^{u}-\bm{e}_h^{\hat{u}},2\mu\bm{\epsilon}_h(\bm{e}_h^{u})\bm{n}\rangle_{\partial\mathcal{T}_h\setminus \mathcal{\varepsilon}_h^{\Gamma}} +\langle \bm{e}_h^{u}-\bm{e}_h^{\tilde{u}},2\mu\bm{\epsilon}_h(\bm{e}_h^{u})\bm{n}\rangle_{*,\Gamma} - L_1(2\mu\bm{\epsilon}_h(\bm{e}_h^{u})) \\
	\apprle &  \lVert \sqrt{2\mu}\bm{\epsilon}_h(\bm{e}_h^{u})\rVert_{0,\mathcal{T}_h}( \lVert \sqrt{2\mu}\mathcal{A}\bm{e}_h^{\sigma}\rVert_{0,\mathcal{T}_h} +\lVert \tau^{\frac{1}{2}}(\bm{e}_h^{u}-\bm{e}_h^{\hat{u}}) \rVert_{0,\partial\mathcal{T}_h\setminus \mathcal{\varepsilon}_h^{\Gamma}}+\lVert \eta^{\frac{1}{2}}(\bm{e}_h^{u}-\bm{e}_h^{\tilde{u}}) \rVert_{0,\partial\mathcal{T}_h\setminus \mathcal{\varepsilon}_h^{\Gamma}}
	 + \lVert \sqrt{\frac{2\mu}{h}} (\bm{u}-\bm{Q}_k^{\Gamma}\bm{u})\rVert_{*,\Gamma}\\
	 &+ \lVert \frac{\sqrt{2\mu}}{h}(\bm{u}-\bm{Q}_k\bm{u})\rVert_{0,\mathcal{T}_h} + \lVert \sqrt{2\mu}\mathcal{A}(\bm{\sigma}-\bm{Q}_{k-1}\bm{\sigma}) \rVert_{0,\mathcal{T}_h}+  \lVert \sqrt{\frac{2\mu}{h}} (\bm{u}-\bm{Q}_k^b\bm{u})\rVert_{0,\partial\mathcal{T}_h\setminus\varepsilon_h^{\Gamma}}).
	\end{align*}
	Under the assumption  (A4) it holds  $\bm{Q}_k^b(\bm{u})|_F= \bm{\Pi}_k^b(\bm{u}|_F)$ on $F\subset\partial\Omega_D$. Thus,
	By  
	\eqref{def_A} and  Lemma \ref{ineq} we further get

		
	\begin{align*}
	&\lVert \sqrt{2\mu}\bm{\epsilon}_h(\bm{e}_h^{u})\rVert_{0,\mathcal{T}_h} \\ 
	\apprle& \lVert \frac{1}{\sqrt{2\mu}}\bm{e}_h^{\sigma}\rVert_{0,\mathcal{T}_h} +\lVert \tau^{\frac{1}{2}}(\bm{e}_h^{u}-\bm{e}_h^{\hat{u}}) \rVert_{0,\partial\mathcal{T}_h\setminus \mathcal{\varepsilon}_h^{\Gamma}}+\lVert \eta^{\frac{1}{2}}(\bm{e}_h^{u}-\bm{e}_h^{\tilde{u}}) \rVert_{0,\partial\mathcal{T}_h\setminus \mathcal{\varepsilon}_h^{\Gamma}}
	+ h^k\lVert \sqrt{2\mu} \bm{u}\rVert_{k+1,\Omega_1\cup \Omega_2}+ h^{k}\lVert \frac{1}{\sqrt{2\mu}} \bm{\sigma} \rVert_{k,\Omega_1\cup \Omega_2} \\
	\apprle& \interleave (\bm{e}_h^{\sigma},\bm{e}_h^{u},\bm{e}_h^{\hat{u}},\bm{e}_h^{\tilde{u}})\interleave  +\lVert \frac{1}{\sqrt{2\mu}}\bm{e}_h^{\sigma}\rVert_{0,\mathcal{T}_h} +h^k\lVert \sqrt{2\mu} \bm{u}\rVert_{k+1,\Omega_1\cup \Omega_2}+h^{k}\lVert \frac{1}{\sqrt{2\mu}} \bm{\sigma} \rVert_{k,\Omega_1\cup \Omega_2} ,
	\end{align*}

	which yields the desired estimate \eqref{est_2}.
\end{proof}

Based on \cref{est_energy_1},   we can obtain   the following result.

\begin{lem}\label{lem36}
	Let $(\bm{\sigma},\bm{u})\in \bm{H^{k}}(\Omega_1\cup \Omega_2,S)\times \bm{H^{k+1}}(\Omega_1\cup \Omega_2) $ and $(\bm{\sigma}_h,\bm{u}_h,\hat{\bm{u}}_h,\tilde{\bm{u}}_h)\in \bm{W}_h\times \bm{V}_h\times \bm{M}_h(\bm{g}_D)\times \bm{\tilde{M}}_h$ be the solutions of the  problem (\ref{pb1}) and the X-HDG scheme (\ref{xhdgscheme}), respectively. Then it holds
	\begin{align} \label{add_est}
	\lVert \bm{e}_h^{\sigma}\rVert_{0,\mathcal{T}_h} 
	\apprle 
	 \interleave (\bm{e}_h^{\sigma},\bm{e}_h^u,\bm{e}_h^{\hat{u}} ,\bm{e}_h^{\tilde{u}}) \interleave  .
	\end{align}
Further more, for any $h \in (0, h_0]$,
	\begin{align}
	\interleave (\bm{e}_h^{\sigma},\bm{e}_h^u,\bm{e}_h^{\hat{u}},\bm{e}_h^{\tilde{\mu}})\interleave 
	\apprle  h^k
	( \lVert \sqrt{ \mu} \bm{u}\rVert_{k+1,\Omega_1\cup \Omega_2}+\lVert \frac{1}{\sqrt{ \mu}}\bm{\sigma}\rVert_{k,\Omega_1\cup \Omega_2}). \label{est_add00}
	\end{align}
\end{lem}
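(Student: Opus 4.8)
The plan is to establish the two estimates in sequence, using Lemma~\ref{est_energy_1} as the workhorse. First I would prove \eqref{add_est}: by the definition \eqref{def_A} of the compliance tensor $\mathcal{A}$ and the identity for $(\mathcal{A}\bm w,\bm w)_{\mathcal{T}_h}$ recorded in the proof of Theorem~\ref{wellposed}, the bilinear form $(\mathcal{A}\cdot,\cdot)_{\mathcal{T}_h}$ controls $\lVert \tfrac{1}{\sqrt{\mu}}\cdot\rVert_{0,\mathcal{T}_h}^2$ from below (the $\lambda$-dependent term is nonnegative), so $\lVert\bm e_h^\sigma\rVert_{0,\mathcal{T}_h}\apprle \sqrt{\mu_{\max}}\,\lVert\bm e_h^\sigma\rVert_{\mathcal{A},\mathcal{T}_h}\apprle \interleave(\bm e_h^\sigma,\bm e_h^u,\bm e_h^{\hat u},\bm e_h^{\tilde u})\interleave$ directly from the definition \eqref{energynorm} of the energy seminorm. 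That gives \eqref{add_est} immediately.

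Next, for \eqref{est_add00}, I would start from identity \eqref{est_1}, namely $\interleave(\bm e_h^\sigma,\bm e_h^u,\bm e_h^{\hat u},\bm e_h^{\tilde u})\interleave^2=\sum_{i=1}^4 E_i$, and bound each $E_i$ on the right. The terms $E_1,E_2,E_3$ are all sums of boundary or volume pairings of an interpolation error (of $\bm\sigma$ or $\bm u$, supported only on the interface strip $\mathcal{T}_h^\Gamma$ for the volume pieces, or on all of $\partial\mathcal{T}_h$ for some boundary pieces) against one of the error quantities $\bm e_h^\sigma$, $\bm e_h^u-\bm e_h^{\hat u}$, or $\bm e_h^u-\bm e_h^{\tilde u}$. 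Applying Cauchy--Schwarz, the discrete trace inequality from Lemma~\ref{trace}, the inverse inequality $\lVert\bm e_h^\sigma\rVert_{0,\partial K}\apprle h_K^{-1/2}\lVert\bm e_h^\sigma\rVert_{0,K}$, and the projection estimates \eqref{Q1}--\eqref{Q3} of Lemma~\ref{ineq} with $s=k$ (for $\bm\sigma$) and $s=k+1$ (for $\bm u$), each such product is bounded by $h^k(\lVert\sqrt\mu\,\bm u\rVert_{k+1}+\lVert\tfrac1{\sqrt\mu}\bm\sigma\rVert_k)$ times one of $\lVert\bm e_h^\sigma\rVert_{0,\mathcal{T}_h}$, $\lVert\tau^{1/2}(\bm e_h^u-\bm e_h^{\hat u})\rVert_{0,\partial\mathcal{T}_h\setminus\varepsilon_h^\Gamma}$, $\lVert\eta^{1/2}(\bm e_h^u-\bm e_h^{\tilde u})\rVert_{*,\Gamma}$, i.e.\ by $h^k(\cdots)\,\interleave(\bm e_h^\sigma,\bm e_h^u,\bm e_h^{\hat u},\bm e_h^{\tilde u})\interleave$ after using \eqref{add_est}. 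The factors of $h^{-1/2}$ coming from the inverse/trace inequalities combine with the weights $\tau\sim h^{-1}$, $\eta\sim h^{-1}$ to produce exactly the $h^k$ scaling once the $h^{2k-1}$ from \eqref{Q2}--\eqref{Q3} is accounted for; care is needed to track the $\mu_i$ weights, but since $\mu$ is piecewise constant this is only bookkeeping.

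The only genuinely delicate term is $E_4=-(\bm\sigma-\bm Q_{k-1}\bm\sigma,\bm\epsilon_h(\bm e_h^u))_{\mathcal{T}_h^\Gamma}$, because it pairs an interpolation error against $\bm\epsilon_h(\bm e_h^u)$, which is \emph{not} directly controlled by the energy seminorm. Here I would invoke \eqref{est_2} from Lemma~\ref{est_energy_1}: Cauchy--Schwarz gives $|E_4|\apprle \lVert\tfrac1{\sqrt\mu}(\bm\sigma-\bm Q_{k-1}\bm\sigma)\rVert_{0,\mathcal{T}_h^\Gamma}\,\lVert\sqrt\mu\,\bm\epsilon_h(\bm e_h^u)\rVert_{0,\mathcal{T}_h}$, the first factor is $\apprle h^k\lVert\tfrac1{\sqrt\mu}\bm\sigma\rVert_{k}$ by Lemma~\ref{ineq}, and the second is bounded via \eqref{est_2} by $\interleave(\cdots)\interleave+\lVert\tfrac1{\sqrt\mu}\bm e_h^\sigma\rVert_{0,\mathcal{T}_h}+h^k(\lVert\sqrt\mu\,\bm u\rVert_{k+1}+\lVert\tfrac1{\sqrt\mu}\bm\sigma\rVert_k)$, which by \eqref{add_est} is again $\apprle\interleave(\cdots)\interleave+h^k(\cdots)$. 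Collecting all four bounds yields $\interleave(\cdots)\interleave^2\apprle h^k(\lVert\sqrt\mu\,\bm u\rVert_{k+1}+\lVert\tfrac1{\sqrt\mu}\bm\sigma\rVert_k)\,\interleave(\cdots)\interleave + h^{2k}(\cdots)^2 + (\text{small})\interleave(\cdots)\interleave^2$; absorbing the $\interleave(\cdots)\interleave$ factors into the left side (Young's inequality, with the implicit constants independent of $h$ and of the interface location) gives \eqref{est_add00}. The main obstacle, then, is handling $E_4$ cleanly — it forces the bootstrap through \eqref{est_2} and requires that the $\bm\epsilon_h(\bm e_h^u)$ estimate itself not reintroduce uncontrolled terms — and, throughout, making sure every constant hidden in $\apprle$ is genuinely independent of how $\Gamma$ cuts the mesh, which is exactly what the uniform trace and projection estimates of Lemmas~\ref{trace} and \ref{ineq} are designed to guarantee.
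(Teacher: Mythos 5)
Your treatment of \eqref{est_add00} follows essentially the same route as the paper: both start from the identity \eqref{est_1}, bound $E_1$--$E_3$ by Cauchy--Schwarz together with the trace/inverse inequalities and the projection estimates of Lemmas~\ref{trace} and~\ref{ineq}, isolate $E_4$ as the term that must be bootstrapped through \eqref{est_2}, and then absorb the resulting $\interleave\cdot\interleave$ factors by Young's inequality. That part of the plan is sound and matches the paper's argument step for step.

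The gap is in your one-line justification of \eqref{add_est}. You drop the ``$\lambda$-dependent term'' in the identity $(\mathcal{A}\bm{w},\bm{w})_{\mathcal{T}_h}=\lVert\frac{1}{\sqrt{2\mu}}\bm{w}_*\rVert_{0,\mathcal{T}_h}^2+\lVert (d(d\lambda+2\mu))^{-1/2}\,tr(\bm{w})\rVert_{0,\mathcal{T}_h}^2$ and conclude that $(\mathcal{A}\cdot,\cdot)_{\mathcal{T}_h}$ controls $\lVert\mu^{-1/2}\cdot\rVert_{0,\mathcal{T}_h}^2$ from below, hence $\lVert\bm{e}_h^\sigma\rVert_{0,\mathcal{T}_h}\apprle\sqrt{\mu_{max}}\,\lVert\bm{e}_h^\sigma\rVert_{\mathcal{A},\mathcal{T}_h}$. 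That step fails: discarding the trace term leaves control only of the deviatoric part $\bm{w}_*=\bm{w}-\frac1d tr(\bm{w})\bm{I}$, not of $\bm{w}$ itself. Indeed, for $\bm{w}=c\bm{I}$ one has $\bm{w}_*=\bm{0}$ and $\lVert\bm{w}\rVert_{0,\mathcal{T}_h}^2=(d\lambda+2\mu)\lVert\bm{w}\rVert_{\mathcal{A},\mathcal{T}_h}^2$, so no $\lambda$-uniform constant of the kind you claim can exist. The paper instead keeps the trace term, pairs the identity above with $\lVert\bm{\tau}\rVert_{0,\mathcal{T}_h}^2=\lVert\bm{\tau}_*\rVert_{0,\mathcal{T}_h}^2+\frac1d\lVert tr(\bm{\tau})\rVert_{0,\mathcal{T}_h}^2$, and obtains $\lVert\bm{e}_h^\sigma\rVert_{0,\mathcal{T}_h}\leq\tilde C\sqrt{\lambda_{max}+\mu_{max}}\,\interleave(\bm{e}_h^{\sigma},\bm{e}_h^u,\bm{e}_h^{\hat{u}},\bm{e}_h^{\tilde{u}})\interleave$ --- a constant that necessarily degrades as $\lambda\to\infty$, which is exactly the point of Remarks~\ref{rm33} and~\ref{rm34}. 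Since the paper's $\apprle$ only demands independence of the mesh and of the interface location, the inequality \eqref{add_est} as stated still holds, but your route to it would ``prove'' a uniform-in-$\lambda$ bound that is demonstrably false; you should replace it with the $\lambda_{max}$-weighted argument (or, for genuine $\lambda$-robustness, the coercivity route of Remark~\ref{rm33}).
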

\begin{proof}
	For any $\bm{w}\in \mathbb{R}^{d\times d}$, let $\bm{w}_*: = \bm{w} - \frac{1}{d}tr(\bm{w})I$ denote its deviatoric tensor. Then we can easily have, for all $\bm{w}, \bm{\tau}\in [L^2(\Omega)]^{d\times d}$, 
	\begin{align*}
		(\mathcal{A}\bm{w},\bm{\tau})_{\mathcal{T}_h} &= (\frac{1}{2\mu}\bm{w}_*,\bm{\tau}_*) _{\mathcal{T}_h}+ (\frac{1}{d(d\lambda+2\mu)}tr(\bm{w}),tr(\bm{\tau}))_{\mathcal{T}_h}, \\
		\| \bm{\tau}\|_{0,\mathcal{T}_h}^2 &= \| \bm{\tau}_*\|_{0,\mathcal{T}_h}^2 + \frac{1}{d} \| tr(\bm{\tau})\|_{0,\mathcal{T}_h}^2,
	\end{align*}
which, together with the definition of $\interleave \cdot\interleave$, indicate 
	\begin{align} \label{add_est-new}
	\lVert \bm{e}_h^{\sigma}\rVert_{0,\mathcal{T}_h} 
	\leq \tilde C \sqrt{\lambda_{max} +\mu_{max}} 
	 \interleave (\bm{e}_h^{\sigma},\bm{e}_h^u,\bm{e}_h^{\hat{u}} ,\bm{e}_h^{\tilde{u}}) \interleave,
	\end{align}
i.e. \eqref{add_est} holds. Here and in what follows   $\tilde C$    denotes a generic positive constant   independent of   $h$,   $\lambda_{max} = \max\limits_{i=1,2}{\lambda_i}$, $\mu_{max} = \max\limits_{i=1,2}{\mu_i}$ and $\mu_{min} = \min\limits_{i=1,2}{\mu_i}$.   

From  \eqref{est_1}, the Cauchy-Schwarz inequality,  the inverse inequality and \cref{ineq} it follows
	\begin{align*}
		\interleave (\bm{e}_h^{\sigma},\bm{e}_h^u,\bm{e}_h^{\hat{u}} ,\bm{e}_h^{\tilde{u}}) \interleave^2  
		\leq & \| \bm{u}-\bm{Q}_k^{\Gamma}\bm{u}\|_{*,\Gamma}\|\bm{e}_h^{\sigma}\|_{*,\Gamma} +\| \bm{u}-\bm{Q}_k\bm{u}\|_{0,\mathcal{T}_h^{\Gamma}}\|\nabla_h\cdot\bm{e}_h^{\sigma}\|_{0,\mathcal{T}_h^{\Gamma}}+ \|\frac{1}{\sqrt{2\mu}}( \bm{\sigma}-\bm{Q}_{k-1}\bm{\sigma})\|_{0,\mathcal{T}_h^{\Gamma}}\|\frac{1}{\sqrt{2\mu}}\bm{e}_h^{\sigma}\|_{0,\mathcal{T}_h^{\Gamma}}\\
		&+ \| \bm{u}-\bm{Q}_k^{b}\bm{u}\|_{0,\partial\mathcal{T}_h^{\Gamma}\setminus\varepsilon_h^{\Gamma}}\|\bm{e}_h^{\sigma}\|_{0,\partial\mathcal{T}_h^{\Gamma}\setminus\varepsilon_h^{\Gamma}}  +  \|\frac{1}{\sqrt{2\mu}}( \bm{\sigma}-\bm{Q}_{k-1}\bm{\sigma})\|_{0,\mathcal{T}_h^{\Gamma}}\|\sqrt{2\mu}\bm{\epsilon}_h(\bm{e}_h^{u})\|_{0,\mathcal{T}_h^{\Gamma}}\\
		&+\left(\lVert \tau^{\frac{1}{2}}(\bm{e}_h^{u}-\bm{e}_h^{\hat{u}}) \rVert_{0,\partial\mathcal{T}_h\setminus \mathcal{\varepsilon}_h^{\Gamma}}+\lVert \eta^{\frac{1}{2}}(\bm{e}_h^{u}-\bm{e}_h^{\tilde{u}}) \rVert_{*,\Gamma}\right)\left(\lVert \tau^{-\frac{1}{2}}(\bm{\sigma}-\bm{Q}_{k-1}\bm{\sigma}) \rVert_{0,\partial\mathcal{T}_h\setminus \mathcal{\varepsilon}_h^{\Gamma}}\right.\\
		&\left.+\lVert \tau^{\frac{1}{2}}(\bm{Q}_{k}\bm{u} - \bm{Q}_{k}^b\bm{u}) \rVert_{0,\partial\mathcal{T}_h\setminus \mathcal{\varepsilon}_h^{\Gamma}}+\lVert \eta^{-\frac{1}{2}}(\bm{\sigma}-\bm{Q}_{k-1}\bm{\sigma}) \rVert_{*,\Gamma}+\lVert \eta^{\frac{1}{2}}(\bm{Q}_{k}\bm{u} - \bm{Q}_{k}^{\Gamma}\bm{u})\|_{*,\Gamma}\right)\\
		\leq & \tilde C  h^k\left(\| \sqrt{2\mu}\bm{u}\|_{k+1,\Omega_1\cup\Omega_2}+\| \frac{1}{\sqrt{2\mu}}\bm{\sigma}\|_{k,\Omega_1\cup\Omega_2} \right) \left(\lVert \tau^{\frac{1}{2}}(\bm{e}_h^{u}-\bm{e}_h^{\hat{u}}) \rVert_{0,\partial\mathcal{T}_h\setminus \mathcal{\varepsilon}_h^{\Gamma}}+\lVert \eta^{\frac{1}{2}}(\bm{e}_h^{u}-\bm{e}_h^{\tilde{u}}) \rVert_{*,{\Gamma}}\right. \\
		&\left.+\lVert  \sqrt{2\mu}\bm{\epsilon}_h(\bm{e}_h^{u})\rVert_{0,\mathcal{T}_h}  + \lVert \frac{1}{\sqrt{2\mu}}\bm{e}_h^{\sigma}\rVert_{0,\mathcal{T}_h} \right).
	\end{align*}
This inequality, together with  \eqref{est_2}, \eqref{add_est-new} and the definition of $\interleave \cdot\interleave$,   implies
\begin{align}\label{est_add00-new}
	\interleave (\bm{e}_h^{\sigma},\bm{e}_h^u,\bm{e}_h^{\hat{u}},\bm{e}_h^{\tilde{\mu}})\interleave 
	\leq \tilde C  h^k\frac{\sqrt{\lambda_{max}+\mu_{max}}}{\sqrt{\mu_{min}}}( \lVert \sqrt{ \mu} \bm{u}\rVert_{k+1,\Omega_1\cup \Omega_2}+\lVert \frac{1}{\sqrt{ \mu}}\bm{\sigma}\rVert_{k,\Omega_1\cup \Omega_2}), 
	\end{align}
i.e.  the estimate \eqref{est_add00} holds. 

\end{proof}

\begin{rem}\label{rm33}
Note that the hidden constant factors in  the upper bounds of  \eqref{add_est} and \eqref{est_add00} depend on the Lam\'e coefficient   $\lambda$ (cf.   \eqref{add_est-new} and \eqref{est_add00-new} for the explicit dependence). 	This is due to the use of  extension operators $E_i$ in the interface elements in $\mathcal{T}_h^{\Gamma}$.  In fact, if   the estimates in Lemma \ref{trace} hold simultaneously for  $i = 1,2$, then  in the  analysis we can avoid  to introduce  $E_i$  just by  defining the operators $Q_r, Q_r^b$ as  
		\begin{align}\label{redef}
		(Q_rv)|_K: = \chi_1\Pi_{r}(v|_{K\cap\Omega_{1}})+\chi_2\Pi_{r}(v|_{K\cap\Omega_{2}}),\quad 
		(Q_r^bv)|_F: = \chi_1\Pi_{r}^b(v|_{F\cap\Omega_1})+\chi_2\Pi_{r}^b(v|_{F\cap\Omega_2}). 
		\end{align}
	In this situation,   we can follow  the same line as in the proof of \cite[Theorem 3.4]{chen-xie2016}   to get the 
	coercivity inequality
		\begin{align} \label{coer_condition}
	\lVert \bm{w}\rVert_{0,\mathcal{T}_h} \leq \tilde C  \sqrt{\mu_{max}} \left( \lVert \bm{w} \rVert_{\mathcal{A},\mathcal{T}_h} +\lVert\sqrt{\frac{h}{2 \mu}}\left(\bm{w} \bm{n}-\bm{\hat{w}} \bm{n}\right)\rVert_{0, \partial\mathcal{T}_h\setminus\varepsilon_h^{\Gamma}} +\lVert\sqrt{\frac{h}{2 \mu}}\left(\bm{w} \bm{n}-
	\bm{\tilde{w}} \bm{n}\right)\rVert_{*,\Gamma } \right)
	\end{align}
	for all \(\left( \bm{w}, \bm{\hat{w}},  \bm{\tilde{w}}\right) \in \bm{W}_{h} \times \bm{L^2}( \varepsilon_h^*)\times \bm{L^2}( \varepsilon_h^{\Gamma})\) satisfying
	\begin{align*}
	(\bm{w},\bm{\epsilon}_h(\bm{Q}_{k}\bm{v}))-\langle \bm{\hat{w}}\bm{n},\bm{Q}_{k}\bm{v}\rangle_{\partial\mathcal{T}_h\setminus \mathcal{\varepsilon}_h^{\Gamma}} -\langle \bm{\tilde{w}}\bm{n},\bm{Q}_{k}\bm{v}\rangle_{*,\Gamma}&=\mathbf{0}, \ \forall \bm{v}\in \bm{H^1}(\Omega)\  {\rm and}\  \bm{v}|_{\partial\Omega_D} = 0, \\
	\langle \bm{\hat{w}}\bm{n}, \bm{\hat{\mu}}\rangle_{\partial\mathcal{T}_h\setminus \mathcal{\varepsilon}_h^{\Gamma}} &=0, \ \forall \bm{\hat{\mu}} \in \bm{M}_{h}(0), \\ 
	\langle \bm{\tilde{w}}\bm{n}, \bm{\tilde{\mu}}\rangle_{*,\Gamma } &= 0, \ \forall \bm{\hat{\mu}} \in \bm{\tilde{M}}_{h}, \\ 
	tr(\bm{w}) \in& L_{0}^{2}(\Omega),  \ {\rm if }\  \Gamma_{N}=\emptyset.
	\end{align*}
By this uniform coercivity, we can further obtain
		\begin{align}
			\lVert \bm{e}_h^{\sigma}\rVert_{0,\mathcal{T}_h} &\leq \tilde C
			 \interleave (\bm{e}_h^{\sigma},\bm{e}_h^u,\bm{e}_h^{\hat{u}} ,\bm{e}_h^{\tilde{u}}) \interleave 
			\leq \tilde C 
			 h^k( \lVert \sqrt{ \mu} \bm{u}\rVert_{k+1,\Omega_1\cup \Omega_2}+\lVert \frac{1}{\sqrt{ \mu}}\bm{\sigma}\rVert_{k,\Omega_1\cup \Omega_2}), \label{est_add001-new}
\end{align}
which finally leads to error estimates which are uniform in $\lambda$ (cf. Remark \ref{rm34}).
\end{rem}

In light of  Lemmas \ref{ineq}, \ref{est_energy_1}, \ref{lem36} and the triangle inequality, 
we can easily derive the following   optimal error estimates for  the stress and displacement approximations.

\begin{thm}\label{est_energynorm}
	Let $(\bm{\sigma},\bm{u})\in \bm{H^{k}}(\Omega_1\cup \Omega_2)\times \bm{H^{k+1}}(\Omega_1\cup \Omega_2) $ and $(\bm{\sigma}_h,\bm{u}_h,\hat{\bm{u}}_h,\tilde{\bm{u}}_h)\in \bm{W}_h\times \bm{V}_h\times \bm{M}_h(\bm{g}_D)\times \bm{\tilde{M}}_h$ be the solutions of the  problem (\ref{pb1}) and the X-HDG scheme (\ref{xhdgscheme}), respectively.
	Then    for any $h \in (0, h_0]$ it holds
	\begin{align} 
		\lVert \bm{\sigma}-\bm{\sigma}_h\rVert_{0,\mathcal{T}_h}  \apprle h^k ( \lVert  \bm{u}\rVert_{k+1,\Omega_1\cup \Omega_2}+\lVert \bm{\sigma}\rVert_{k,\Omega_1\cup \Omega_2}), \label{est_add1}\\
	     \lVert  \bm{\epsilon}(\bm{u}) -\bm{\epsilon}(\bm{u}_h ) \rVert_{0,\mathcal{T}_h}  \apprle h^k( \lVert  \bm{u}\rVert_{k+1,\Omega_1\cup \Omega_2}+\lVert \bm{\sigma}\rVert_{k,\Omega_1\cup \Omega_2})). \label{est_add2}
	\end{align}
\end{thm}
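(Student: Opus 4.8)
The plan is to obtain both estimates from the triangle inequality, splitting each error into a projection part controlled by \cref{ineq} and a discrete part controlled by \cref{est_energy_1,lem36}. Throughout, all strains are understood elementwise with respect to $\mathcal{T}_h$, and the mesh-independent weights involving the Lam\'e coefficients are absorbed into the generic constant.

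First I would treat the stress. Writing $\bm{\sigma}-\bm{\sigma}_h=(\bm{\sigma}-\bm{Q}_{k-1}\bm{\sigma})+\bm{e}_h^{\sigma}$ and applying the triangle inequality, the projection part satisfies $\lVert\bm{\sigma}-\bm{Q}_{k-1}\bm{\sigma}\rVert_{0,\mathcal{T}_h}\apprle h^k\lVert\bm{\sigma}\rVert_{k,\Omega_1\cup\Omega_2}$ by the tensor analogue of \eqref{Q1} with $(r,s)=(k-1,k)$, while the discrete part is first bounded by $\interleave(\bm{e}_h^{\sigma},\bm{e}_h^{u},\bm{e}_h^{\hat u},\bm{e}_h^{\tilde u})\interleave$ via \eqref{add_est} and then by $h^k(\lVert\bm{u}\rVert_{k+1,\Omega_1\cup\Omega_2}+\lVert\bm{\sigma}\rVert_{k,\Omega_1\cup\Omega_2})$ via \eqref{est_add00}. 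Adding the two bounds gives \eqref{est_add1}.

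Next I would treat the strain, splitting $\bm{\epsilon}(\bm{u})-\bm{\epsilon}(\bm{u}_h)=\bm{\epsilon}_h(\bm{u}-\bm{Q}_k\bm{u})+\bm{\epsilon}_h(\bm{e}_h^{u})$. For the first term, $\lVert\bm{\epsilon}_h(\bm{u}-\bm{Q}_k\bm{u})\rVert_{0,\mathcal{T}_h}\le\lvert\bm{u}-\bm{Q}_k\bm{u}\rvert_{1,\mathcal{T}_h}\apprle h^k\lVert\bm{u}\rVert_{k+1,\Omega_1\cup\Omega_2}$ by \eqref{Q1} with $(r,s)=(k,k+1)$. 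For the second term I would invoke \eqref{est_2}, which bounds $\lVert\sqrt{\mu}\,\bm{\epsilon}_h(\bm{e}_h^{u})\rVert_{0,\mathcal{T}_h}$ by $\interleave(\cdots)\interleave+\lVert\tfrac{1}{\sqrt{\mu}}\bm{e}_h^{\sigma}\rVert_{0,\mathcal{T}_h}+h^k(\lVert\sqrt{\mu}\,\bm{u}\rVert_{k+1,\Omega_1\cup\Omega_2}+\lVert\tfrac{1}{\sqrt{\mu}}\bm{\sigma}\rVert_{k,\Omega_1\cup\Omega_2})$, and then bound both $\interleave(\cdots)\interleave$ and $\lVert\bm{e}_h^{\sigma}\rVert_{0,\mathcal{T}_h}$ by $h^k(\lVert\bm{u}\rVert_{k+1,\Omega_1\cup\Omega_2}+\lVert\bm{\sigma}\rVert_{k,\Omega_1\cup\Omega_2})$ using \eqref{add_est} and \eqref{est_add00} once more. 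Collecting the terms (and dividing through by $\sqrt{\mu_{\min}}$, again hidden in the constant) yields \eqref{est_add2}.

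The substantive analytic work — the error equations, the test-function choice $\bm{w}=2\mu\bm{\epsilon}_h(\bm{e}_h^{u})$, and the absorption argument — has already been carried out in \cref{est_energy_1,lem36}, so the only point demanding care here is the bookkeeping of constants: as made explicit in \eqref{add_est-new} and \eqref{est_add00-new}, the hidden constant depends on $\lambda$ and on $\mu_{\min},\mu_{\max}$, so the bounds are \emph{not} uniform in the Lam\'e parameters unless \cref{lem36} is replaced by the robust variant sketched in \cref{rm33}. I do not anticipate any genuine obstacle beyond this assembly.
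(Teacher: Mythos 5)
Your proposal is correct and follows exactly the route the paper intends: the paper gives no written-out proof but states that Theorem \ref{est_energynorm} follows from Lemmas \ref{ineq}, \ref{est_energy_1}, \ref{lem36} and the triangle inequality, which is precisely the decomposition into projection error plus discrete error (bounded via \eqref{add_est}, \eqref{est_add00} and \eqref{est_2}) that you carry out. Your closing remark on the $\lambda$-dependence of the hidden constant also matches Remark \ref{rm34}.
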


\subsection{ $L^2$  estimation for displacement approximation}
 To derive an $L^2$ error estimate for the displacement approximation by the Aubin-Nitsche's technique of duality argument, we need to introduce an auxiliary problem:
\begin{subequations}\label{pb_dual}
	\begin{align}
\mathcal{A} \bm{\Phi}-\bm{\epsilon}_h(\bm{\phi}) &=\mathbf{0}, \text { in } \Omega_1\cup\Omega_2 \label{pb_dual1}\\ 
\nabla \cdot \bm{\Phi} &=\bm{e}_h^{u}, \text { in } \Omega_1\cup\Omega_2 \label{pb_dual2}\\
\bm{\phi} &=\bm{0}, \text { on } \partial\Omega_{D}  \label{pb_dual3}\\ 
\bm{\Phi} \bm{n} &=\bm{0}, \text { on } \partial\Omega_{N} \label{pb_dual4} \\
\llbracket \bm{\phi} \rrbracket= \bm{0}, \ \llbracket \bm{\Phi}\bm{n}\rrbracket &= \bm{0}, \text { on } \Gamma \label{pb_dual5},
\end{align}
where $ \bm{e}_h^u =\bm{Q}_{k}\bm{u} - \bm{u}_h$. \end{subequations}
In addition, we assume  that the following regularity estimate holds:
\begin{align}\label{regularestimate}
\lVert \bm{\Phi}\rVert_{H^1(\Omega_1\cup\Omega_2)} +\lVert \mu\bm{\phi}\rVert_{H^2(\Omega_1\cup\Omega_2)}  \apprle \lVert \bm{e}_h^{u}\rVert_{0,\mathcal{T}_h}.
\end{align}

We note that this regularity result holds when 􏲳$\Omega$ is convex and $\Gamma$ is smooth with $\Gamma\cap\partial\Omega=\emptyset$   (cf.\cite{Chen1998Finite,
1987Computationelasticity}).
\begin{thm}\label{0norm}
	Let $(\bm{\sigma},\bm{u})\in \bm{H^{k}}(\Omega_1\cup \Omega_2,S)\times \bm{H^{k+1}}(\Omega_1\cup \Omega_2) $ and $(\bm{\sigma}_h,\bm{u}_h,\hat{\bm{u}}_h,\tilde{\bm{u}}_h)\in \bm{W}_h\times \bm{V}_h\times \bm{M}_h(\bm{g}_D)\times \bm{\tilde{M}}_h$ be the solutions of the  problem (\ref{pb1}) and the X-HDG scheme (\ref{xhdgscheme}), respectively.
	Then  for any $h \in (0, h_0]$ it holds the error estimate 
	\begin{align}
			\lVert \bm{u}-\bm{u}_h\rVert_{0,\mathcal{T}_h}
	\apprle  h^{k+1} ( \lVert  \bm{u}\rVert_{k+1,\Omega_1\cup \Omega_2}+\lVert \bm{\sigma}\rVert_{k,\Omega_1\cup \Omega_2}).    \label{est_L2}
	\end{align}
\end{thm}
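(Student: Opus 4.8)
The proof will follow the Aubin--Nitsche duality argument built on the auxiliary problem \eqref{pb_dual} and the regularity bound \eqref{regularestimate}. The starting identity is $\lVert\bm{e}_h^{u}\rVert_{0,\mathcal{T}_h}^2=(\bm{e}_h^{u},\nabla\cdot\bm{\Phi})_{\mathcal{T}_h}$, which comes from \eqref{pb_dual2}. I would then substitute the $L^2$-type projections of the dual solution, $\bm{Q}_{k-1}\bm{\Phi}\in\bm{W}_h$, $\bm{Q}_k\bm{\phi}\in\bm{V}_h$, $\bm{Q}_k^{b}\bm{\phi}\in\bm{M}_h(\bm{0})$ (admissible because $\bm{\phi}|_{\partial\Omega_D}=\bm{0}$), and $\bm{Q}_k^{\Gamma}\bm{\phi}\in\bm{\tilde{M}}_h$, into the error equations \eqref{err_eqs}, and combine the resulting relations with elementwise integration by parts and the dual relations $\bm{\epsilon}_h(\bm{\phi})=\mathcal{A}\bm{\Phi}$, $\nabla\cdot\bm{\Phi}=\bm{e}_h^{u}$, $\llbracket\bm{\phi}\rrbracket=\llbracket\bm{\Phi}\bm{n}\rrbracket=\bm{0}$, $\bm{\Phi}\bm{n}|_{\partial\Omega_N}=\bm{0}$. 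Using the single-valuedness of $\bm{e}_h^{\hat{u}}$ and of the exact traces $\bm{\Phi}\bm{n}$ and $\bm{\phi}$ (so that the principal boundary terms telescope or vanish on $\partial\Omega$), the hypothesis (A4) (so that $\bm{e}_h^{\hat{u}}=\bm{0}$ on $\partial\Omega_D$, exactly as in \cref{est_energy_1}), and the $L^2$-orthogonality of $\bm{Q}_{k-1},\bm{Q}_k,\bm{Q}_k^{b}$ on the non-interface elements and faces, the bilinear-form part should collapse to $\lVert\bm{e}_h^{u}\rVert_{0,\mathcal{T}_h}^2$, leaving an identity $\lVert\bm{e}_h^{u}\rVert_{0,\mathcal{T}_h}^2=\sum_{j}T_j$ in which every $T_j$ pairs a projection error of $(\bm{\sigma},\bm{u})$ — supported on the interface strip $\mathcal{T}_h^{\Gamma}$, on $\partial\mathcal{T}_h$, or on $\Gamma$ — with either a projection error of the dual pair $(\bm{\Phi},\bm{\phi})$ (typically a ``difference'' combination such as $\bm{Q}_k\bm{\phi}-\bm{Q}_k^{b}\bm{\phi}$, $\bm{Q}_k\bm{\phi}-\bm{Q}_k^{\Gamma}\bm{\phi}$, or $\bm{Q}_{k-1}\bm{\Phi}-\bm{\Phi}$) or a component of the energy error $\interleave(\bm{e}_h^{\sigma},\bm{e}_h^{u},\bm{e}_h^{\hat{u}},\bm{e}_h^{\tilde{u}})\interleave$, including $\lVert\bm{\epsilon}_h(\bm{e}_h^{u})\rVert_{0,\mathcal{T}_h}$.

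I would then estimate each $T_j$ by Cauchy--Schwarz. A projection error of $(\bm{\sigma},\bm{u})$ contributes a factor $h^{k}(\lVert\bm{u}\rVert_{k+1,\Omega_1\cup\Omega_2}+\lVert\bm{\sigma}\rVert_{k,\Omega_1\cup\Omega_2})$ by \cref{ineq}; an energy-error component contributes $h^{k}(\lVert\bm{u}\rVert_{k+1,\Omega_1\cup\Omega_2}+\lVert\bm{\sigma}\rVert_{k,\Omega_1\cup\Omega_2})$ by \cref{est_energynorm} and \cref{est_energy_1}; and a dual projection error supplies the extra power $h\,(\lVert\bm{\Phi}\rVert_{H^1(\Omega_1\cup\Omega_2)}+\lVert\mu\bm{\phi}\rVert_{H^2(\Omega_1\cup\Omega_2)})$ by \cref{ineq} (applied with $s=1$ for $\bm{\Phi}$ and $s=2$ for $\bm{\phi}$), which by \eqref{regularestimate} is $\apprle h\,\lVert\bm{e}_h^{u}\rVert_{0,\mathcal{T}_h}$. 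For the terms living on $\Gamma$ and on $\mathcal{T}_h^{\Gamma}$ I would pass from norms on $\Gamma_K$ to interior norms through the unfitted trace estimates of \cref{trace} (valid for $h\le h_0$) and use an inverse inequality for $\nabla_h\cdot\bm{e}_h^{\sigma}$. Combining the bounds gives $\lVert\bm{e}_h^{u}\rVert_{0,\mathcal{T}_h}^2\apprle h^{k+1}(\lVert\bm{u}\rVert_{k+1,\Omega_1\cup\Omega_2}+\lVert\bm{\sigma}\rVert_{k,\Omega_1\cup\Omega_2})\,\lVert\bm{e}_h^{u}\rVert_{0,\mathcal{T}_h}$, hence $\lVert\bm{e}_h^{u}\rVert_{0,\mathcal{T}_h}\apprle h^{k+1}(\lVert\bm{u}\rVert_{k+1,\Omega_1\cup\Omega_2}+\lVert\bm{\sigma}\rVert_{k,\Omega_1\cup\Omega_2})$, and \eqref{est_L2} then follows from $\lVert\bm{u}-\bm{u}_h\rVert_{0,\mathcal{T}_h}\le\lVert\bm{u}-\bm{Q}_k\bm{u}\rVert_{0,\mathcal{T}_h}+\lVert\bm{e}_h^{u}\rVert_{0,\mathcal{T}_h}$ and \cref{ineq}.

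The step I expect to be delicate is the interface layer: on $\mathcal{T}_h^{\Gamma}$ and on $\Gamma$ there is no Galerkin orthogonality, so a crude Cauchy--Schwarz on the $L_4$-type term $(\bm{\sigma}-\bm{Q}_{k-1}\bm{\sigma},\bm{\epsilon}_h(\bm{Q}_k\bm{\phi}))_{\mathcal{T}_h^{\Gamma}}$ yields only $O(h^{k})$, because $\lVert\bm{\epsilon}(\bm{\phi})\rVert_{0,\mathcal{T}_h^{\Gamma}}$ need not be small. The remedy is to regroup it with the $\mathcal{T}_h^{\Gamma}$-part of $L_1(\bm{Q}_{k-1}\bm{\Phi})$: after inserting $\bm{\epsilon}(\bm{\phi})=\mathcal{A}\bm{\Phi}$ and using the symmetry $(\mathcal{A}\bm{a},\bm{b})_{\mathcal{T}_h}=(\bm{a},\mathcal{A}\bm{b})_{\mathcal{T}_h}$, the two parts proportional to the full-size $\bm{\Phi}$ cancel, leaving the genuine double approximation error $(\bm{\sigma}-\bm{Q}_{k-1}\bm{\sigma},\mathcal{A}(\bm{Q}_{k-1}\bm{\Phi}-\bm{\Phi}))_{\mathcal{T}_h^{\Gamma}}=O(h^{k+1})$. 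Analogous regroupings — replacing $\bm{Q}_{k-1}\bm{\Phi}$ by the single-valued $\bm{\Phi}$ before exploiting the single-valuedness of $\bm{e}_h^{\hat{u}}$ and $\bm{e}_h^{\tilde{u}}$ in the edge and $\Gamma$ terms, the difference $\bm{Q}_{k-1}\bm{\Phi}-\bm{\Phi}$ again paying one power of $h$ — dispose of the remaining interface and boundary contributions. These manipulations work precisely because the hypothesis $\Gamma\cap\partial\Omega=\emptyset$ secures the $H^2$-regularity of $\bm{\phi}$ in \eqref{regularestimate}, so that the approximation errors $\lVert\bm{\Phi}-\bm{Q}_{k-1}\bm{\Phi}\rVert_{0,\mathcal{T}_h^{\Gamma}}\apprle h\lVert\bm{\Phi}\rVert_{1}$ and $\lVert\bm{\epsilon}_h(\bm{Q}_k\bm{\phi}-\bm{\phi})\rVert_{0,\mathcal{T}_h^{\Gamma}}\apprle h\lVert\bm{\phi}\rVert_{2}$ are available.
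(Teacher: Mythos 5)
Your proposal is correct and follows essentially the same route as the paper's proof: the duality argument with \eqref{pb_dual}--\eqref{regularestimate}, testing the error equations with $(\bm{Q}_{k-1}\bm{\Phi},\bm{Q}_k\bm{\phi},\bm{Q}_k^b\bm{\phi},\bm{Q}_k^{\Gamma}\bm{\phi})$, exploiting single-valuedness of $\bm{u}$, $\bm{\phi}$ and $\bm{\Phi}\bm{n}$ to trade $\bm{Q}_{k-1}\bm{\Phi}$ for $\bm{Q}_{k-1}\bm{\Phi}-\bm{\Phi}$ in the interface and face terms, and the regrouping of $L_4(\bm{Q}_k\bm{\phi})$ with the volume part of $L_1(\bm{Q}_{k-1}\bm{\Phi})$ via $\mathcal{A}\bm{\Phi}=\bm{\epsilon}_h(\bm{\phi})$ is exactly the cancellation the paper performs in its $I_4$ term. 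The only cosmetic difference is that the residual of that cancellation should be written as $(\bm{\sigma}-\bm{Q}_{k-1}\bm{\sigma},\,\mathcal{A}(\bm{Q}_{k-1}\bm{\Phi}-\bm{\Phi})+\bm{\epsilon}_h(\bm{\phi}-\bm{Q}_k\bm{\phi}))_{\mathcal{T}_h^{\Gamma}}$ (the paper writes it as $(\bm{\sigma}-\bm{Q}_{k-1}\bm{\sigma},\bm{Q}_{k-1}\bm{\epsilon}_h(\bm{\phi})-\bm{\epsilon}_h(\bm{Q}_k\bm{\phi}))_{\mathcal{T}_h^{\Gamma}}$), both pieces of which you already bound by $h\,$ times the dual norms, so the conclusion is unaffected.
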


\begin{proof}
	Testing the equations \eqref{pb_dual2} by $\bm{e}_h^u$ and using the projection properties  and integration by parts,   we have 
	\begin{align*}
	&\lVert \bm{e}_h^u\rVert_{0,\mathcal{T}_h}^2 
	= (\nabla_h\cdot \bm{\Phi},\bm{e}_h^u)_{\mathcal{T}_h} \\
	=& (\nabla_h\cdot \bm{Q}_{k-1}\bm{\Phi},\bm{e}_h^u)_{\mathcal{T}_h}
	+\langle (\bm{\Phi} - \bm{Q}_{k-1}\bm{\Phi})\bm{n},\bm{e}_h^u\rangle_{\partial \mathcal{T}_h\setminus \mathcal{\varepsilon}_h^{\Gamma}} +\langle (\bm{\Phi} - \bm{Q}_{k-1}\bm{\Phi})\bm{n},\bm{e}_h^u\rangle_{*,\Gamma} +	 (\bm{\Phi}-\bm{Q}_{k-1}\bm{\Phi},\nabla_h\bm{e}_h^u)_{\mathcal{T}_h^{\Gamma}} ,
	\end{align*}
	which, together with   the fact that $\llbracket \bm{\Phi}\bm{n}	\rrbracket  = \bm{0}$ on $\Gamma$ and the error equation \eqref{err_eq1}, implies 
	\begin{align}\label{eq44}
	\lVert \bm{e}_h^u\rVert_{0,\mathcal{T}_h}^2 
	=& L_1(\bm{Q}_{k-1}\bm{\Phi}) - (\mathcal{A}\bm{e}_h^{\sigma} ,\bm{Q}_{k-1}\bm{\Phi})_{\mathcal{T}_h}
	+\langle (\bm{\Phi} - \bm{Q}_{k-1}\bm{\Phi})\bm{n},\bm{e}_h^u-\bm{e}_h^{\hat{u}}\rangle_{\partial \mathcal{T}_h\setminus \mathcal{\varepsilon}_h^{\Gamma}}  \nonumber \\
	+&\langle (\bm{\Phi} - \bm{Q}_{k-1}\bm{\Phi})\bm{n},\bm{e}_h^u-\bm{e}_h^{\tilde{u}}\rangle_{*,\Gamma}+	 (\bm{\Phi}-\bm{Q}_{k-1}\bm{\Phi},\nabla_h\bm{e}_h^u)_{\mathcal{T}_h^{\Gamma}}  .
	\end{align}
	Taking $(\bm{v},\bm{\hat{\mu}},\bm{\tilde{\mu}})=(\bm{Q}_{k}\bm{\phi},\bm{Q}_{k}^b\bm{\phi},\bm{Q}_{k}^{\Gamma}\bm{\phi})$ in 
	\eqref{err_eq2}-\eqref{err_eq4} yields that 
	\begin{align*}
	-(\nabla_h\cdot\bm{e}_h^{\sigma},\bm{Q}_{k}\bm{\phi})_{\mathcal{T}_h}+ \langle \tau(\bm{e}_h^u-\bm{e}_h^{\hat{u}}),\bm{Q}_{k}\bm{\phi}\rangle_{\partial\mathcal{T}_h\setminus \mathcal{\varepsilon}_h^{\Gamma}} 
	+  \langle \eta(\bm{e}_h^u-\bm{e}_h^{\tilde{u}}),\bm{Q}_{k}\bm{\phi}\rangle_{*,\Gamma}  &= \sum_{i=2}^{4}L_i(\bm{Q}_{k}\bm{\phi}) ,\\
	\langle \bm{e}_h^{\sigma}\bm{n},\bm{Q}_{k}^b\bm{\phi}\rangle_{\partial \mathcal{T}_h\setminus \mathcal{\varepsilon}_h^{\Gamma}}-\langle \tau(\bm{e}_h^u-\bm{e}_h^{\hat{u}}),\bm{Q}_{k}^b\bm{\phi}\rangle_{\partial \mathcal{T}_h\setminus \mathcal{\varepsilon}_h^{\Gamma}}&= -L_2(\bm{Q}_{k}^b\bm{\phi}),\\
	\langle \bm{e}_h^{\sigma}\bm{n},\bm{Q}_{k}^{\Gamma}\bm{\phi}\rangle_{*,\Gamma}-\langle \eta(\bm{e}_h^u-\bm{e}_h^{\tilde{u}}),\bm{Q}_{k}^{\Gamma}\bm{\phi}\rangle_{*,\Gamma}&= -L_3(\bm{Q}_{k}^{\Gamma}\bm{\phi}).
	\end{align*}
	These relations plus \eqref{pb_dual}  lead to
	\begin{align*}
	&(\mathcal{A}\bm{\Phi},\bm{e}_h^{\sigma})_{\mathcal{T}_h} \\
	=& -(\bm{\phi},\nabla_h\cdot\bm{e}_h^{\sigma})_{\mathcal{T}_h} +\langle \bm{\phi},\bm{e}_h^{\sigma}\bm{n}\rangle_{\partial\mathcal{T}_h\setminus \mathcal{\varepsilon}_h^{\Gamma}} +\langle \bm{\phi},\bm{e}_h^{\sigma}\bm{n}\rangle_{*,\Gamma} \\
	=& -(\bm{Q}_{k}\bm{\phi},\nabla_h\cdot\bm{e}_h^{\sigma})_{\mathcal{T}_h} +\langle \bm{Q}_{k}^b\bm{\phi},\bm{e}_h^{\sigma}\bm{n}\rangle_{\partial\mathcal{T}_h\setminus \mathcal{\varepsilon}_h^{\Gamma}} +\langle \bm{Q}_{k}^{\Gamma}\bm{\phi},\bm{e}_h^{\sigma}\bm{n}\rangle_{*,\Gamma} + \langle \bm{\phi}-\bm{Q}_{k}^{\Gamma}\bm{\phi},\bm{e}_h^{\sigma}\bm{n}\rangle_{*,\Gamma}\\
	=& L_2(\bm{Q}_{k}\bm{\phi}-\bm{Q}_{k}^b\bm{\phi})+L_3(\bm{Q}_{k}\bm{\phi}-\bm{Q}_{k}^{\Gamma}\bm{\phi}) 
	+ \langle \tau(\bm{e}_h^u-\bm{e}_h^{\hat{u}}),\bm{Q}_{k}^b\bm{\phi}-\bm{Q}_{k}\bm{\phi}\rangle_{\partial\mathcal{T}_h\setminus \mathcal{\varepsilon}_h^{\Gamma}} +  \langle \eta(\bm{e}_h^u-\bm{e}_h^{\tilde{u}}),\bm{Q}_{k}^{\Gamma}\bm{\phi}-\bm{Q}_{k}\bm{\phi}\rangle_{*,\Gamma} \\
	& + \langle \bm{\phi}-\bm{Q}_{k}^{\Gamma}\bm{\phi},\bm{e}_h^{\sigma}\bm{n}\rangle_{*,\Gamma}.
	\end{align*}
By \eqref{eq44} we further get
	\begin{align*}
	\lVert \bm{e}_h^u\rVert_{0,\mathcal{T}_h}^2  = \sum_{j=1}^{4}I_j+(\bm{Q}_{k-1}\bm{\Phi}-\bm{\Phi},\mathcal{A}\bm{e}_h^{\sigma})_{\mathcal{T}_h}+	(\bm{\Phi}-\bm{Q}_{k-1}\bm{\Phi},\nabla_h\bm{e}_h^u)_{\mathcal{T}_h^{\Gamma}} ,
	\end{align*}
	where 
	\begin{align*}
	I_1 :=& \langle (\bm{Q}_{k-1}\bm{\Phi}-\bm{\Phi})\bm{n},\bm{e}_h^u-\bm{e}_h^{\hat{u}}\rangle_{\partial \mathcal{T}_h\setminus \mathcal{\varepsilon}_h^{\Gamma}} +\langle (\bm{Q}_{k-1}\bm{\Phi}-\bm{\Phi})\bm{n},\bm{e}_h^u-\bm{e}_h^{\tilde{u}}\rangle_{*,\Gamma}, \\
	I_2 :=&L_2(\bm{Q}_{k}\bm{\phi}-\bm{Q}_{k}^b\bm{\phi})+L_3(\bm{Q}_{k}\bm{\phi}-\bm{Q}_{k}^{\Gamma}\bm{\phi}), \\
	I_3 := &\langle \tau(\bm{e}_h^u-\bm{e}_h^{\hat{u}}),\bm{Q}_{k}^b\bm{\phi}-\bm{Q}_{k}\bm{\phi}\rangle_{\partial\mathcal{T}_h\setminus \mathcal{\varepsilon}_h^{\Gamma}} +  \langle \eta(\bm{e}_h^u-\bm{e}_h^{\tilde{u}}),\bm{Q}_{k}^{\Gamma}\bm{\phi}-\bm{Q}_{k}\bm{\phi}\rangle_{*,\Gamma} , \\
	I_4 :=&  \langle \bm{\phi}-\bm{Q}_{k}^{\Gamma}\bm{\phi},\bm{e}_h^{\sigma}\bm{n}\rangle_{*,\Gamma} + L_1(\bm{Q}_{k-1}\bm{\Phi}) + L_4(\bm{Q}_k\bm{\phi}) .
	\end{align*}
	In light of the Cauchy-Schwarz inequality and  Lemma \ref{ineq}, 
	we   obtain
	\begin{align*}
	(\bm{Q}_{k-1}\bm{\Phi}-\bm{\Phi},\mathcal{A}\bm{e}_h^{\sigma})_{\mathcal{T}_h} +	 (\bm{\Phi}-\bm{Q}_{k-1}\bm{\Phi},\bm{\epsilon}_h(\bm{e}_h^u))_{\mathcal{T}_h^{\Gamma}}&\leq (\lVert \mathcal{A}\bm{e}_h^{\sigma}\rVert_{0,\mathcal{T}_h} + \lVert \bm{\epsilon}_h(\bm{e}_h^u)\rVert_{0,\mathcal{T}_h})\lVert \bm{Q}_{k-1}\bm{\Phi}-\bm{\Phi} \rVert_{0,\mathcal{T}_h} \\
	&\apprle h(\lVert \mu^{-1}\bm{e}_h^{\sigma}\rVert_{0,\mathcal{T}_h} +\lVert \bm{\epsilon}_h(\bm{e}_h^u)\rVert_{0,\mathcal{T}_h})\lVert \bm{\Phi} \rVert_{1,\Omega_{1}\cup\Omega_{2}}.
	\end{align*}
	From the definition of $\interleave \cdot \interleave$ it follows
	\begin{align*}
	I_1 &\leq \lVert (\bm{Q}_{k-1}\bm{\Phi}-\bm{\Phi})\rVert_{\partial\mathcal{T}_h\setminus \mathcal{\varepsilon}_h^{\Gamma}} \lVert \tau^{-\frac{1}{2}}\tau^{\frac{1}{2}}(\bm{e}_h^u-\bm{e}_h^{\hat{u}}) \rVert_{\partial\mathcal{T}_h\setminus \mathcal{\varepsilon}_h^{\Gamma}} +  \lVert (\bm{Q}_{k-1}\bm{\Phi}-\bm{\Phi})\rVert_{*,{\Gamma}} \lVert \eta^{-\frac{1}{2}}\eta^{\frac{1}{2}}(\bm{e}_h^u-\bm{e}_h^{\hat{u}}) \rVert_{*,{\Gamma}}\\
	&\apprle h\lVert \bm{\Phi}\rVert_{1,\Omega_{1}\cup\Omega_{2}} \interleave \mu^{-\frac{1}{2}}(\bm{e}_h^{\sigma},\bm{e}_h^u,\bm{e}_h^{\hat{u}},\bm{e}_h^{\tilde{u}})\interleave.
	\end{align*}
	Similarly, we can obtain 
	\begin{align*}
	I_2 &\leq h^{k+1}\lVert \mu\bm{\phi}\rVert_{2,\Omega_{1}\cup\Omega_{2}}(\lVert \mu^{-1}\bm{\sigma}\rVert_{k,\Omega_{1}\cup\Omega_{2}} +\lVert \bm{u}\rVert_{k+1,\Omega_{1}\cup\Omega_{2}} ), \\
	I_3 &\leq h\lVert \mu\bm{\phi}\rVert_{2,\Omega_{1}\cup\Omega_{2}} \interleave \mu^{-\frac{1}{2}}(\bm{e}_h^{\sigma},\bm{e}_h^u,\bm{e}_h^{\hat{u}},\bm{e}_h^{\tilde{u}})\interleave .
	\end{align*}
	It remains to estimate  $I_4$.
	Due to the fact that $\bm{u}, \bm{Q}_{k}^{\Gamma}\bm{u}$ and $\bm{\Phi}$ are all single-valued on $F\in \varepsilon_h^*$,   we have

	\begin{align*}
	I_4 =& \langle \bm{\phi}-\bm{Q}_{k}^{\Gamma}\bm{\phi},\bm{e}_h^{\sigma}\bm{n}\rangle_{*,\Gamma} +\langle \bm{u}-\bm{Q}_{k}^{\Gamma}\bm{u},\bm{Q}_{k-1}\bm{\Phi}\bm{n}\rangle_{*,\Gamma} + (\bm{u}-\bm{Q}_{k}\bm{u},\nabla_h\cdot  \bm{Q}_{k-1}\bm{\Phi})_{\mathcal{T}_h^{\Gamma}}\\
	&+(\mathcal{A} (\bm{\sigma}-\bm{Q}_{k-1}\bm{\sigma}),\bm{Q}_{k-1}\bm{\Phi})_{\mathcal{T}_h^{\Gamma}}+\langle \bm{u}-\bm{Q}_{k}^b\bm{u},\bm{Q}_{k-1}\bm{\Phi}\bm{n} \rangle_{\partial\mathcal{T}_h^{\Gamma}\setminus\varepsilon_h^{\Gamma}}-(\bm{\sigma} - \bm{Q}_{k-1}\bm{\sigma},\nabla_h\bm{Q}_k\bm{\phi} )_{\mathcal{T}_h^{\Gamma}}\\
	= &\langle  \bm{\phi}-\bm{Q}_{k}^{\Gamma}\bm{\phi},\bm{e}_h^{\sigma}\bm{n}\rangle_{*,\Gamma} +\langle \bm{u}-\bm{Q}_{k}^{\Gamma}\bm{u},(\bm{Q}_{k-1}\bm{\Phi}-\bm{\Phi})\bm{n}\rangle_{*,\Gamma} + (\bm{u}-\bm{Q}_{k}\bm{u},\nabla_h\cdot  \bm{Q}_{k-1}\bm{\Phi})_{\mathcal{T}_h^{\Gamma}}\\
	&+(\bm{\sigma}-\bm{Q}_{k-1}\bm{\sigma},\bm{Q}_{k-1}\bm{\epsilon}_h(\bm{\phi}))_{\mathcal{T}_h^{\Gamma}}+\langle \bm{u}-\bm{Q}_{k}^b\bm{u},(\bm{Q}_{k-1}\bm{\Phi}-\bm{\Phi})\bm{n} \rangle_{\partial\mathcal{T}_h^{\Gamma}\setminus\varepsilon_h^{\Gamma}}-(\bm{\sigma} - \bm{Q}_{k-1}\bm{\sigma},\bm{\epsilon}_h(\bm{Q}_k\bm{\phi} )_{\mathcal{T}_h^{\Gamma}}\\
	= &\langle  \bm{\phi}-\bm{Q}_{k}^{\Gamma}\bm{\phi},\bm{e}_h^{\sigma}\bm{n}\rangle_{*,\Gamma} +\langle \bm{u}-\bm{Q}_{k}^{\Gamma}\bm{u},(\bm{Q}_{k-1}\bm{\Phi}-\bm{\Phi})\bm{n}\rangle_{*,\Gamma} + (\bm{u}-\bm{Q}_{k}\bm{u},\nabla_h\cdot  \bm{Q}_{k-1}\bm{\Phi})_{\mathcal{T}_h^{\Gamma}}\\
	&+\langle \bm{u}-\bm{Q}_{k}^b\bm{u},(\bm{Q}_{k-1}\bm{\Phi}-\bm{\Phi})\bm{n} \rangle_{\partial\mathcal{T}_h^{\Gamma}\setminus\varepsilon_h^{\Gamma}}+(\bm{\sigma} - \bm{Q}_{k-1}\bm{\sigma},\bm{Q}_{k-1}\bm{\epsilon}_h(\bm{\phi}) -\bm{\epsilon}_h(\bm{Q}_k\bm{\phi} )_{\mathcal{T}_h^{\Gamma}}\\
	\apprle &h\lVert \mu^{-1}\bm{e}_h^{\sigma}\rVert_{0,\mathcal{T}_h}\lVert \mu\bm{\phi}\rVert_{2,\Omega_{1}\cup\Omega_{2}} + h^{k+1}( \lVert \bm{u}\rVert_{k+1,\Omega_{1}\cup\Omega_{2}}\lVert \bm{\Phi} \rVert_{1,\Omega_{1}\cup\Omega_{2}} + \lVert \mu^{-1}\bm{\sigma}\rVert_{k,\Omega_{1}\cup\Omega_{2}}\lVert \mu\bm{\phi}\rVert_{2,\Omega_{1}\cup\Omega_{2}} ). 
	\end{align*}

	All the above estimates, together with  the regularity assumption
	\eqref{regularestimate} and Theorem \ref{est_energynorm}, imply the desired estimate  \eqref{est_L2}.
\end{proof}

	\begin{rem}\label{rm34} 
	We note that the hidden constant $C$ in the estimates of \cref{est_energynorm} and  \cref{0norm}  depends on the Lam\'e coefficient $\lambda$, although   the numerical results in Section 4 demonstrate the uniform convergence of the X-HDG method.
	
	In fact, as shown in Remark \ref{rm33},	if   the estimates in Lemma \ref{trace} hold simultaneously for  $i = 1,2$ (e.g.  when $\Gamma$ is not close to an edge or a vertex of element), then  we  can use  \eqref{redef}, instead of \eqref{QrQrb-1}-\eqref{QrQrb-2}, to define  the operators $Q_r, Q_r^b$   in the whole error analysis. As a result, 
we can derive  the following     uniform optimal error estimates:
		\begin{align}
			\lVert \bm{\sigma}-\bm{\sigma}_h\rVert_{0,\mathcal{T}_h}  &\leq \tilde C h^k\sqrt{\mu_{max}}( \lVert \sqrt{ \mu} \bm{u}\rVert_{k+1,\Omega_1\cup \Omega_2}+\lVert \frac{1}{\sqrt{ \mu}}\bm{\sigma}\rVert_{k,\Omega_1\cup \Omega_2}), \label{est_add11}\\
			\lVert  \bm{\epsilon}(\bm{u}) -\bm{\epsilon}(\bm{u}_h ) \rVert_{0,\mathcal{T}_h}  &\leq \tilde C h^k \frac{\sqrt{\mu_{max}}}{\mu_{min}}( \lVert \sqrt{ \mu} \bm{u}\rVert_{k+1,\Omega_1\cup \Omega_2}+\lVert \frac{1}{\sqrt{ \mu}}\bm{\sigma}\rVert_{k,\Omega_1\cup \Omega_2}),  \label{est_add21}\\
			\lVert \bm{u}-\bm{u}_h\rVert_{0,\mathcal{T}_h}
			&\leq \tilde C  h^{k+1}  \frac{\sqrt{\mu_{max}}}{\mu_{min}}( \lVert \sqrt{ \mu} \bm{u}\rVert_{k+1,\Omega_1\cup \Omega_2}+\lVert \frac{1}{\sqrt{ \mu}}\bm{\sigma}\rVert_{k,\Omega_1\cup \Omega_2}).    \label{est_L21}
		\end{align}
Here  we recall that $\tilde C$ is   a generic positive constant   independent of   $h$,    $\mu$ and $ \lambda$.  
	\end{rem}

\begin{rem}
	By following the same  routines as in the analysis of the interface-unfitted scheme \eqref{xhdgscheme}, it is easy to see that  Theorems \ref{est_energynorm} and   \ref{0norm}   still hold for   the   boundary-unfitted scheme \eqref{xhdgscheme_1} of the problem \eqref{pb2} in  either of the following two cases (also cf. Remark \ref{rmk2.1}): 

	\begin{enumerate}   
	 \item[(i)] $   \partial\Omega_D$   is   a piecewise straight segment/polygon;
	
	\item[(ii)]  $   \partial\Omega_D$   is not a piecewise straight segment/polygon and $\bm{g}_D=0$.
	\end{enumerate}
	
\end{rem}

\section{Numerical experiments}
In this section, we   shall provide several numerical examples to verify the performance of the proposed  interface/boundary-unfitted X-HDG method.

\begin{exmp}  \label{ex2} A plane strain test with a circular interface.
\end{exmp}
	This example is a plane strain test.  In   \eqref{pb1} we set   (cf. Figure \ref{circledomain}) 
	$$\Omega=[0,1]^2,\ \Omega_2 = \{(x,y): (x-\frac{1}{2})^2+(y-\frac{1}{2})^2<\frac{3}{64}\}, \ 
	\text{ and }\Omega_1 = \Omega\backslash \overline{\Omega}_2.$$
	The exact solution $(\bm{u},\bm{\sigma})$ in $ \Omega_1\cup\Omega_2$ is given by
	\begin{align*}
	\bm{u}(x,y) = \begin{pmatrix}
	-x^2(x-1)^2y(y-1)(2y-1) \\
	y^2(y-1)^2x(x-1)(2x-1)
	\end{pmatrix}, \quad \bm{\sigma }(x,y)=2\mu \bm{\epsilon}_h(\bm{u})+\lambda \text{div}\ \bm{u}\  \bm{ I},  
	\end{align*}
where   the Lam\'e coefficients 
	$\mu = \frac{E}{2(1+\nu)},\quad  \lambda = \frac{E\nu}{(1+\nu)(1-2\nu)},$ 
	with  the Young's modulus $E|_{\Omega_1\cup\Omega_2} = 3$, the Poisson ratio $\nu|_{\Omega_1} = 0.4$ and $\nu|_{\Omega_2} = 0.4, 0.49, 0.4999,0.499999$. We note that the material tends to incompressible  as $\nu\rightarrow 0.5$  (or $\lambda\rightarrow \infty$).  The  force term, boundary conditions  and interface conditions can be derived explicitly.

\begin{figure}[htbp]
	\centering	
	\includegraphics[height = 4.3 cm,width=5.5 cm]{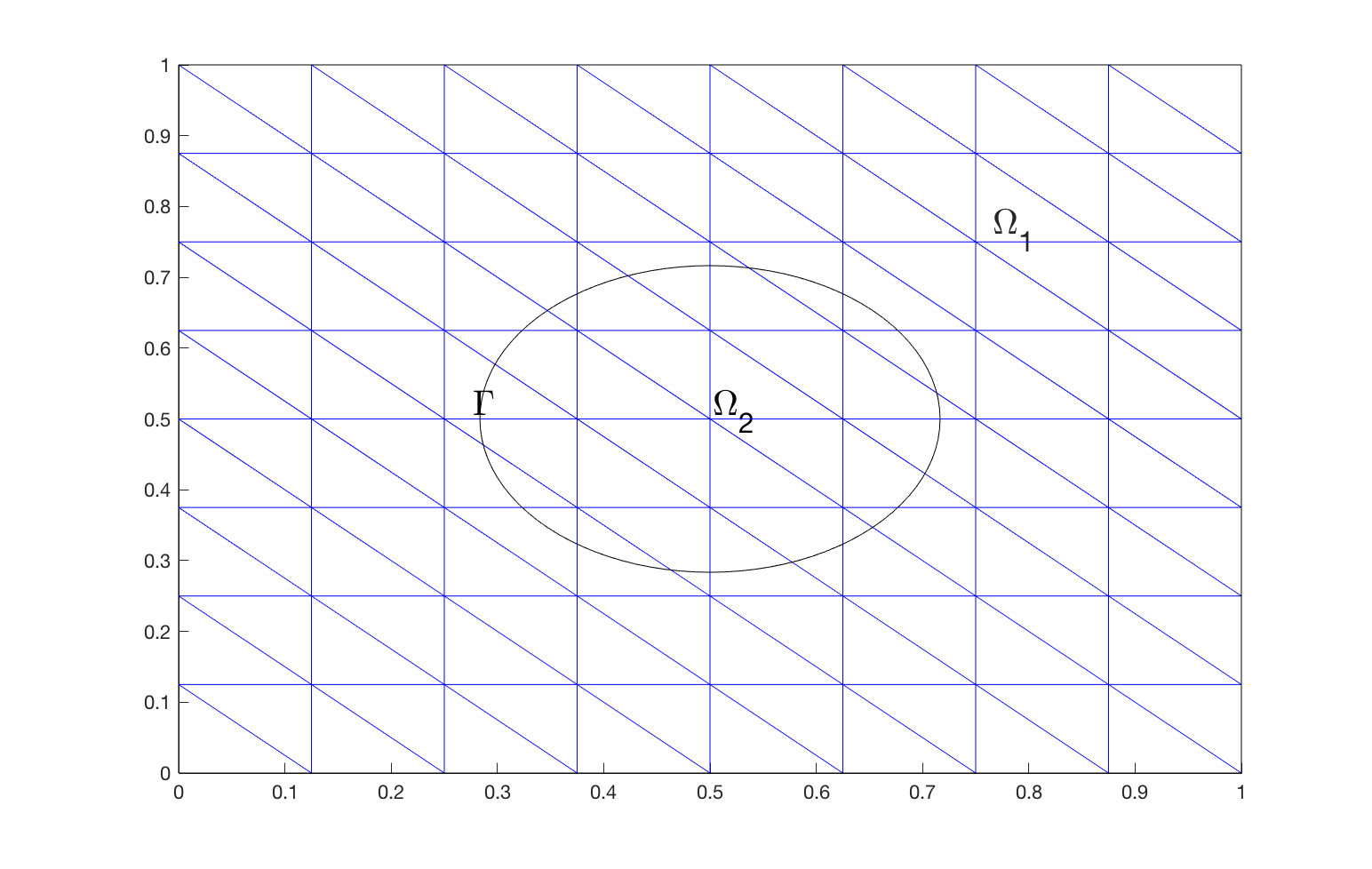} 				
	\caption{The domain with a circular interface: $8\times 8$ mesh}\label{circledomain}	
\end{figure}

We use $N\times N$ uniform triangular meshes for the computation.  Errors of displacement and stress approximations with $k = 1,2$ are shown in Table \ref{ex_table2}. We can see that our X-HDG method \eqref{xhdgscheme} yields  $(k+1)$-th   and  $k$-th orders   of convergence  for $\lVert \bm{u}-\bm{u}_{h}\rVert_0$ and $\lVert \bm{\sigma}-\bm{\sigma}_{h}\rVert_0$, respectively,    which are uniform as $\nu$ tends to $0.5$. These results are  conformable to Theorems \ref{0norm} and  \ref{est_energynorm}.

\begin{table}[H]
	\normalsize
	\caption{History of convergence:  Example \ref{ex2} }
	\label{ex_table2}
	\centering
	\footnotesize
	{
		\begin{tabular}{p{1.5cm}<{\centering}|p{1.15cm}<{\centering}|p{1.45cm}<{\centering}|p{0.45cm}<{\centering}|p{1.45cm}<{\centering}|p{0.45cm}<{\centering}|p{1.45cm}<{\centering}|p{0.45cm}<{\centering}|p{1.45cm}<{\centering}|p{0.45cm}<{\centering}}
			\hline   
			\multirow{3}{*}{$\nu|_{\Omega_2}$}&\multirow{3}{*}{mesh}&  \multicolumn{4}{c|}{$k  =1$}&\multicolumn{4}{c}{$ k = 2$} \\
			\cline{3-10}
			&&\multicolumn{2}{c|}{$\frac{\lVert \bm{u}-\bm{u}_{h}\rVert_0}{\lVert \bm{u}\rVert_0}$ }&\multicolumn{2}{c|}{$\frac{\lVert \bm{\sigma}-\bm{\sigma}_{h}\rVert_0}{\lVert \bm{\sigma}\rVert_0}$}&\multicolumn{2}{c|}{$\frac{\lVert \bm{u}-\bm{u}_{h}\rVert_0}{\lVert \bm{u}\rVert_0}$ }&\multicolumn{2}{c}{$\frac{\lVert \bm{\sigma}-\bm{\sigma}_{h}\rVert_0}{\lVert \bm{\sigma}\rVert_0}$ }\cr\cline{3-10}  
			&&error&order&error&order&error&order&error&order\cr 
			\cline{1-10}
			\multirow{4}{*}{$ 0.4$}
			&$8\times 8$	    &1.3656E-01   & -- 	&3.0139E-01	  &	  --  &8.6182E-03	  
			&   --   &4.5657E-02    &--		   \\
			&$16\times 16$	 &3.7733E-02    &1.86	&1.4693E-01	  &1.04		 &1.3317E-03  &2.69        &1.1478E-02    &1.99	 \\
			&$32\times 32$	&9.8295E-03    &1.94	&7.2247E-02	  &1.02   &1.8939E-04	  &2.81  &2.7901E-03    &2.04	 \\
			&$64\times 64$	&2.4915E-03   &1.98	  &3.5934E-02  &1.01	 &2.5407E-05	  &2.90   &6.7861E-04    &2.04	   \\
			\hline
			
			\multirow{4}{*}{$ 0.49$}
			&$8\times 8$	    &1.3343E-01   & -- 	&3.0096E-01	  &	  --  &8.6171E-03	  
			&   --   &4.5613E-02    &--		   \\
			&$16\times 16$	 &3.6398E-02    &1.87	&1.4674E-01	  &1.04		 &1.3315E-03  &2.69        &1.1463E-02    &1.99	 \\
			&$32\times 32$	&9.4020E-03    &1.95	&7.2150E-02	  &1.02   &1.8936E-04	  &2.81  &2.7857E-03    &2.04	 \\
			&$64\times 64$	&2.2876E-03   &2.04	  &3.5885E-02  &1.01	 &2.5404E-05	  &2.90   &6.7738E-04    &2.04	   \\
			\hline
			
			\multirow{4}{*}{$0.4999$}
			&$8\times 8$	    &1.3312E-01   & -- 	&3.0093E-01	  &	  --  &8.6171E-03	  
			&   --   &4.5609E-02    &--		   \\
			&$16\times 16$	 &3.6268E-02    &1.88	&1.4673E-01	  &1.04		 &1.3315E-03  &2.69        &1.1461E-02    &1.99	 \\
			&$32\times 32$	&9.3616E-03    &1.95	&7.2142E-02	  &1.02   &1.8936E-04	  &2.81  &2.7852E-03    &2.04	 \\
			&$64\times 64$	&2.2709E-03   &2.04	  &3.5881E-02  &1.01	 &2.5404E-05	  &2.90   &6.7726E-04    &2.04	   \\
			\hline
			
			\multirow{4}{*}{$0.499999$}
			&$8\times 8$	    &1.3311E-01   & -- 	&3.0093E-01	  &	  --  &8.6171E-03	  
			&   --   &4.5609E-02    &--		   \\
			&$16\times 16$	 &3.6266E-02    &1.88	&1.4673E-01	  &1.04		 &1.3315E-03  &2.69        &1.1461E-02    &1.99	 \\
			&$32\times 32$	&9.3612E-03    &1.95	&7.2142E-02	  &1.02   &1.8936E-04	  &2.81  &2.7852E-03    &2.04	 \\
			&$64\times 64$	&2.2709E-03   &2.04	  &3.5881E-02  &1.01	 &2.5404E-05	  &2.90   &6.7726E-04    &2.04	   \\
			\hline
		\end{tabular}
	}
\end{table}

\begin{rem} We note that in  implementation of the scheme \eqref{xhdgscheme}  on very refined meshes one may need some special handling of the approximation space $\bm{\tilde{M}}_h = \{\bm{\tilde{\mu}} \in \bm{L^2}(F): 
	\bm{\tilde{\mu}}|_{F}\in \bm{P}_{k}(K)|_F,\forall F\in\mathcal{\varepsilon}_h^{\Gamma}\}$. Taking  the circular interface in $2-dimension$ as an example, when  the mesh size $h$ becomes small enough,  $F\in \varepsilon_h^{\Gamma}$ will be close to a line segment. In this situation, the coordinates $x$ and $y$ on $F$ are approximately linearly-dependent.  Thus, the direct use of $\bm{\tilde{M}_h}$ may lead to a very  large condition number of the resultant stiffness matrix. In this situation, one can replace $\bm{\tilde{M}}_h$ with
	$$\bm{\tilde{M}}_h^{*1} = \{\bm{\tilde{\mu}} \in \bm{L^2}(F): 
	\bm{\tilde{\mu}}|_{F}\in span\{1,x,\cdots, x^k\},\forall F\in\mathcal{\varepsilon}_h^{\Gamma}\}$$
	or $$\bm{\tilde{M}}_h^{*2} = \{\bm{\tilde{\mu}} \in \bm{L^2}(F): 
	\bm{\tilde{\mu}}|_{F}\in span\{1,y,\cdots, y^k\},\forall F\in\mathcal{\varepsilon}_h^{\Gamma}\}$$
	according to the average slope of   $F$.
	Numerical tests indicate that such a modification  does not affect the  accuracy of the scheme. 
\end{rem}


\begin{exmp}\label{ex3}  A plane strain test on a circular domain: boundary-unfitted meshes.
\end{exmp}

 This example is  to test the performance of  the X-HDG scheme  \eqref{xhdgscheme_1} with  boundary-unfitted meshes (cf. Figure \ref{domain_circle}).  In \eqref{pb2} we set  
 $$\Omega =\{(x,y): (x-\frac{1}{2})^2+(y-\frac{1}{2})^2<\frac{3}{16}\}.$$ 
 And the exact solution $(\bm{u},\bm{\sigma})$ has the same form as in Example \ref{ex2}, i.e.
	\begin{align*}
	\bm{u}(x,y) &= \begin{pmatrix}
	-x^2(x-1)^2y(y-1)(2y-1) \\
	y^2(y-1)^2x(x-1)(2x-1)
	\end{pmatrix}, \quad \bm{\sigma }(x,y)= \frac{E}{1+\nu} \bm{\epsilon}_h(\bm{u})+\frac{E\nu}{(1+\nu)(1-2\nu)} \text{div}\ \bm{u}\  \bm{ I},  
	\end{align*}
	where   
	   the Young's modulus $E = 3$, and the Poisson ratio $\nu = 0.49, 0.4999,0.499999$. 


\begin{figure}[H]
	\centering
	\includegraphics[height = 5 cm,width= 5.5 cm]{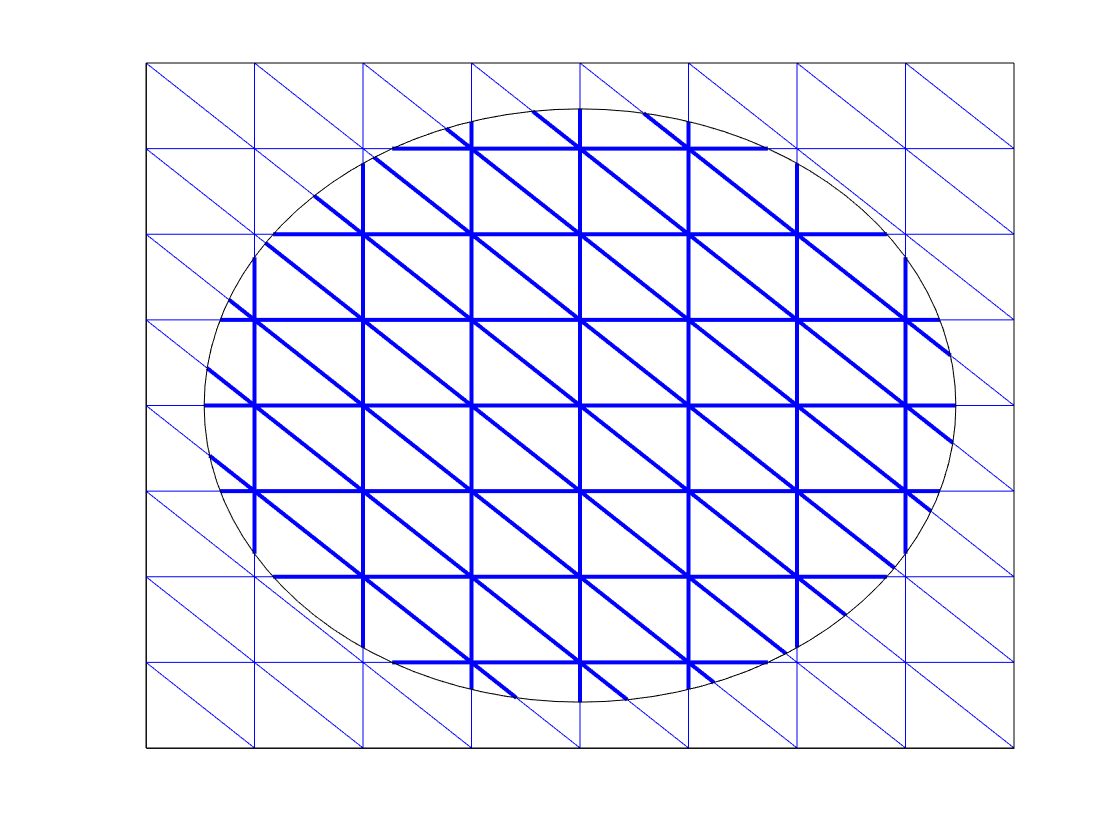} 
	\includegraphics[height = 5 cm,width= 5.5 cm]{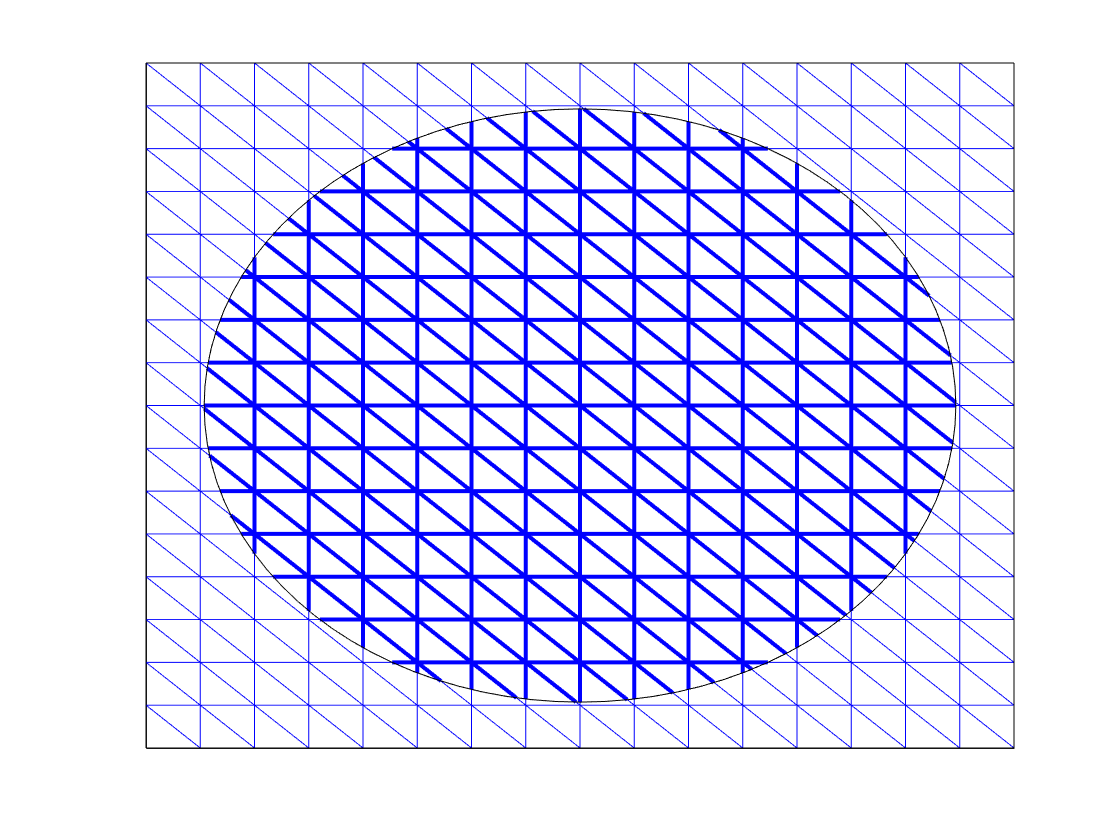} 
	\caption{The geometry of domain in  Example \ref{ex3}: $8\times 8$(left) and $16\times 16$(right) meshes }\label{domain_circle}
\end{figure}

\begin{table}[H]
	\normalsize
	\caption{History of convergence: Example \ref{ex3} }
	\label{ex_table3}
	\centering
	\footnotesize
	{
		\begin{tabular}{p{1.5cm}<{\centering}|p{1.15cm}<{\centering}|p{1.45cm}<{\centering}|p{0.45cm}<{\centering}|p{1.45cm}<{\centering}|p{0.45cm}<{\centering}|p{1.45cm}<{\centering}|p{0.45cm}<{\centering}|p{1.45cm}<{\centering}|p{0.45cm}<{\centering}}
			\hline   
			\multirow{3}{*}{$\nu$}&\multirow{3}{*}{mesh}&  \multicolumn{4}{c|}{$k  =1$}&\multicolumn{4}{c}{$ k = 2$} \\
			\cline{3-10}
			&&\multicolumn{2}{c|}{$\frac{\lVert \bm{u}-\bm{u}_{h}\rVert_0}{\lVert \bm{u}\rVert_0}$ }&\multicolumn{2}{c|}{$\frac{\lVert \bm{\sigma}-\bm{\sigma}_{h}\rVert_0}{\lVert \bm{\sigma}\rVert_0}$}&\multicolumn{2}{c|}{$\frac{\lVert \bm{u}-\bm{u}_{h}\rVert_0}{\lVert \bm{u}\rVert_0}$ }&\multicolumn{2}{c}{$\frac{\lVert \bm{\sigma}-\bm{\sigma}_{h}\rVert_0}{\lVert \bm{\sigma}\rVert_0}$ }\cr\cline{3-10}  
			&&error&order&error&order&error&order&error&order\cr 
			\cline{1-10}
			\multirow{4}{*}{$ 0.49$}
			&$8\times 8$	    &5.3684E-02   & -- 	&2.4740E-01	  &	  --  &4.8695E-03	  
			&   --   &3.1598E-02    &--		   \\
			&$16\times 16$	 &1.4048E-02    &1.93	&1.2945E-01	  &0.93		 &6.7638E-04  &2.85        &8.4985E-03    &1.89	 \\
			&$32\times 32$	&3.5215E-03    &2.00	&6.4187E-02	  &1.01   &9.2824E-05	  &2.87  &2.1532E-03    &1.98	 \\
			&$64\times 64$	&8.8990E-04   &1.98	  &3.1914E-02  &1.01	 &1.2318E-05	  &2.91   &5.3899E-04    &2.00	   \\
			\hline
			
			\multirow{4}{*}{$0.4999$}
			&$8\times 8$	    &5.3551E-02   & -- 	&2.4783E-01	  &	  --  &4.8595E-03	  
			&   --    &3.1711E-02    &--		   \\
			&$16\times 16$	 &1.3991E-02    &1.94	&1.2974E-01	  &0.93		 &6.7533E-04  &2.85  &8.5228E-03    &1.90 \\
			&$32\times 32$	&3.5087E-03    &2.00	&6.4289E-02	  &1.01   &9.2709E-05	  &2.86   &2.1575E-03    &1.98	 \\
			&$64\times 64$	&8.8752E-04   &1.98	  &3.1944E-02  &1.01	 &1.2305E-05	  &2.91    &5.3966E-04    &2.00	   \\
			\hline
			
			\multirow{4}{*}{$0.499999$}
			&$8\times 8$	    &5.3549E-02   & -- 	&2.4784E-01	  &	  --  &4.8594E-03	  
			&   --    &3.1713E-02    &--		   \\
			&$16\times 16$	 &1.3991E-02    &1.94	&1.2975E-01	  &0.93		 &6.7532E-04  &2.85  &8.5231E-03    &1.90 \\
			&$32\times 32$	&3.5085E-03    &2.00	&6.4290E-02	  &1.01   &9.3270E-05	  &2.86   &2.1670E-03    &1.98	 \\
			&$64\times 64$	&8.8749E-04   &1.98	  &3.1944E-02  &1.01	 &1.2305E-05	  &2.92    &5.3967E-04    &2.01	   \\
			\hline
		\end{tabular}
	}
\end{table}

In    \eqref{xhdgscheme_1} we take $\mathbb{B}=[0,1]^2 $, and  use $N\times N$ uniform triangular meshes. Numerical results listed in 
Table \ref{ex_table3} for $k=1$ and $k=2$ show that our X-HDG method \eqref{xhdgscheme_1} yields  $(k+1)$-th   and  $k$-th orders   of uniform convergence  for $\lVert \bm{u}-\bm{u}_{h}\rVert_0$ and $\lVert \bm{\sigma}-\bm{\sigma}_{h}\rVert_0$, respectively.

\begin{exmp}  \label{ex4} A  test on a  non-convex  domain: inner-boundary-unfitted meshes. 
\end{exmp}
	
	This  example  is also used  to test the performance of  the X-HDG scheme  \eqref{xhdgscheme_1} with  boundary-unfitted meshes (cf. Figure \ref{domain_cutcircle}). 
	Set
	 $$\Omega = [0,1]^2\backslash \{(x,y): (x-\frac{1}{2})^2+(y-\frac{1}{2})^2<\frac{3}{64}\}.$$ 
	 The exact solution $(\bm{u},\bm{\sigma})$ of problem \eqref{pb2} is given by 
	\begin{align*}
	\bm{u}(x,y) &= \begin{pmatrix}
	y^4 \\
	x^4
	\end{pmatrix}, \quad \bm{\sigma }(x,y)=2\mu \bm{\epsilon}_h(\bm{u})+\lambda \text{div}\ \bm{u}\  \bm{ I},  
	\end{align*}
	where the Lam\'e coefficients   $\mu = 1$, $\lambda = 1, 10^9$.  

 \begin{figure}[H]
	\centering
	\includegraphics[height = 5 cm,width= 5.5 cm]{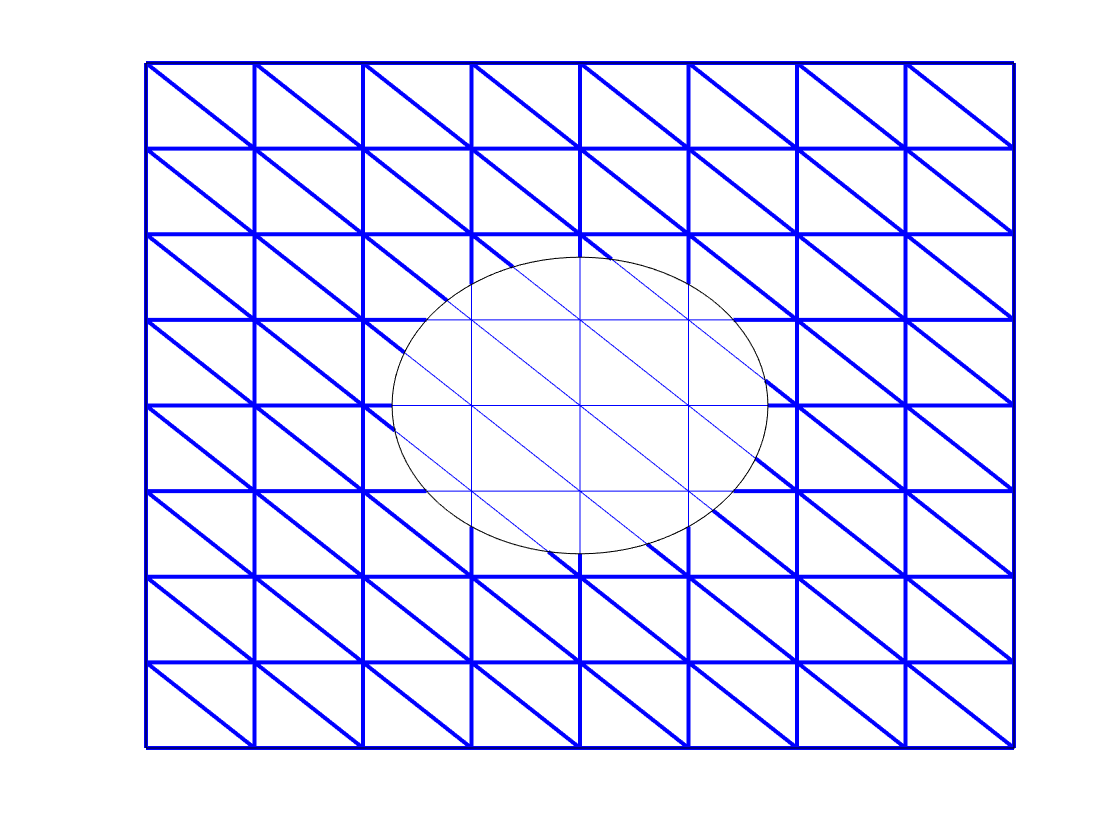} 
	\includegraphics[height = 5 cm,width= 5.5 cm]{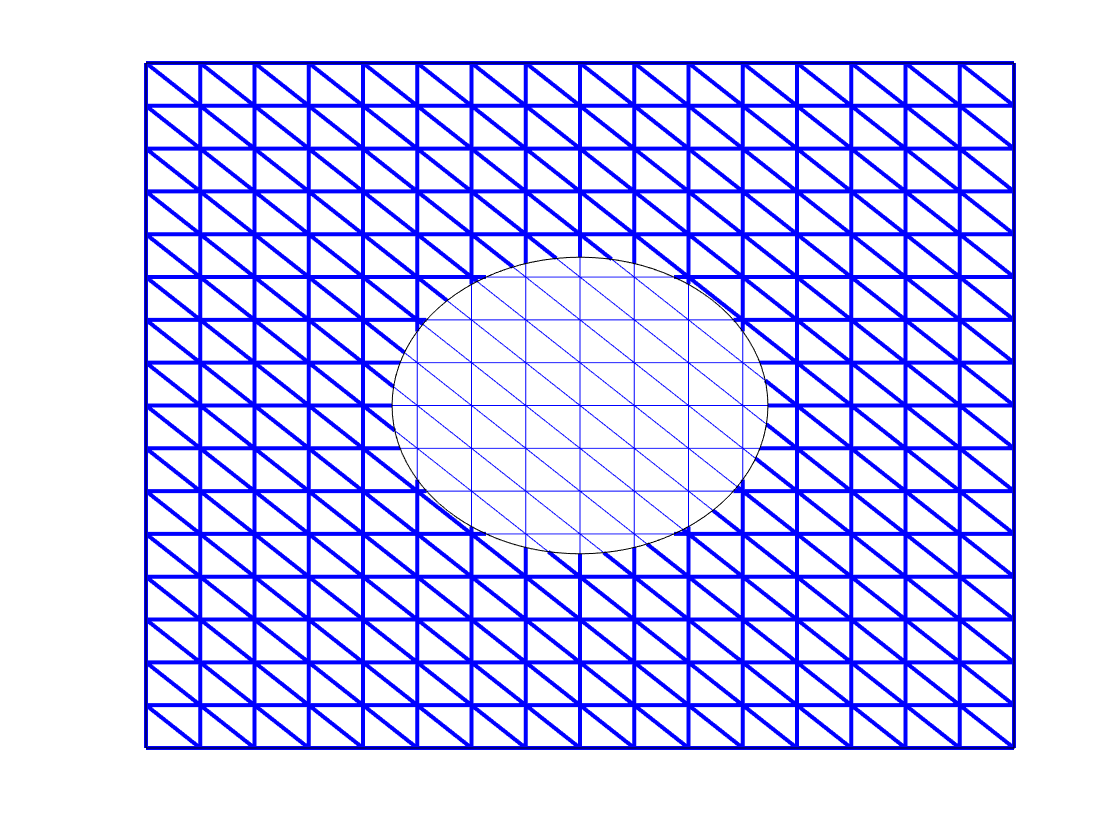} 
	\caption{The geometry of domain in Example \ref{ex4}:   $8\times 8$(left) and $16\times 16$(right) meshes}\label{domain_cutcircle}
\end{figure}

In    \eqref{xhdgscheme_1} we take $\mathbb{B}=[0,1]^2 $, and  use $N\times N$ uniform triangular meshes. Numerical results  in 
Table \ref{ex_table4} for $k=1$ and $k=2$ demonstrate  that the proposed X-HDG method is of  $(k+1)$-th   and  $k$-th orders   of  convergence  for $\lVert \bm{u}-\bm{u}_{h}\rVert_0$ and $\lVert \bm{\sigma}-\bm{\sigma}_{h}\rVert_0$, respectively.

%


\begin{table}[H]
	\normalsize
	\caption{History of convergence: Example \ref{ex4} }
	\label{ex_table4}
	\centering
	\footnotesize
	{
		\begin{tabular}{p{0.7cm}<{\centering}|p{1.15cm}<{\centering}|p{1.45cm}<{\centering}|p{0.45cm}<{\centering}|p{1.45cm}<{\centering}|p{0.45cm}<{\centering}|p{1.45cm}<{\centering}|p{0.45cm}<{\centering}|p{1.45cm}<{\centering}|p{0.45cm}<{\centering}}
			\hline   
			\multirow{3}{*}{k}&\multirow{3}{*}{mesh}&  \multicolumn{4}{c|}{$\lambda =1$}&\multicolumn{4}{c}{$\lambda =10^9$} \\
			\cline{3-10}
			&&\multicolumn{2}{c|}{$\frac{\lVert \bm{u}-\bm{u}_{h}\rVert_0}{\lVert \bm{u}\rVert_0}$ }&\multicolumn{2}{c|}{$\frac{\lVert \bm{\sigma}-\bm{\sigma}_{h}\rVert_0}{\lVert \bm{\sigma}\rVert_0}$}&\multicolumn{2}{c|}{$\frac{\lVert \bm{u}-\bm{u}_{h}\rVert_0}{\lVert \bm{u}\rVert_0}$ }&\multicolumn{2}{c}{$\frac{\lVert \bm{\sigma}-\bm{\sigma}_{h}\rVert_0}{\lVert \bm{\sigma}\rVert_0}$ }\cr\cline{3-10}  
			&&error&order&error&order&error&order&error&order\cr 
			\cline{1-10}
			\multirow{4}{*}{1}
			&$8\times 8$	    &2.3323E-02   & -- 	&8.7293E-02	  &	  --  &1.7339E-02	  
			&   --   &1.2011E-01    &--		   \\
			&$16\times 16$	 &6.2669E-03    &1.90	&4.1513E-02	  &1.07		 &5.1284E-03  &1.76        &5.3700E-02    &1.16	 \\
			&$32\times 32$	&1.6009E-03    &1.97	&2.0172E-02	  &1.04   &1.4018E-03	  &1.87  &2.2952E-02    &1.23	 \\
			&$64\times 64$	&4.0156E-04   &2.00	  &9.9689E-03  &1.02	 &3.6201E-04	  &1.95   &1.0511E-02    &1.13	   \\
			\hline
			
			\multirow{4}{*}{2}
			&$8\times 8$	    &1.5152E-03   & -- 	&4.9384E-03	  &	  --  &1.4237E-03	  
			&   --    &5.8621E-03    &--		   \\
			&$16\times 16$	 &2.2757E-04    &2.74	&1.1363E-03	  &2.12		 &2.2009E-04  &2.69  &1.2826E-03    &2.19	 \\
			&$32\times 32$	&3.0812E-05    &2.88	&2.6608E-04	  &2.09   &3.0196E-05	  &2.87   &2.8741E-04    &2.16	 \\
			&$64\times 64$	&3.9847E-06   &2.95	  &6.3934E-05  &2.06	 &3.9278E-06	  &2.94    &6.7190E-05    &2.10	   \\
			\hline    
		\end{tabular}
	}
\end{table}

\begin{exmp} A plane stress test in crack-tip domain. \label{ex5}
\end{exmp}
	This example is a near crack-tip plane stress problem \cite{BarbieriA2012}.  In    \eqref{pb2} we take  $$\Omega = [0,1]^2\backslash \{(x,y_c): 0\leq x\leq x_c\}$$  
with the crack $\partial\Omega_{N}= \{(x,y_c): 0\leq x\leq x_c\}$  (cf. Figure \ref{domain_crack}), and $\partial\Omega_{D}=\partial\Omega\backslash \partial\Omega_{N}$, 
where $(x_c,y_c) = (\frac{1}{2}, \frac{1}{2})$ is the crack tip. The Lam\'e coefficients $\mu = \frac{E}{2(1+\nu)}$ and $ \lambda = \frac{E\nu}{(1+\nu)(1-\nu)}$ with  $\nu = 1/3, E = 8/3$. 
	The exact solution $(\bm{u},\bm{\sigma})$ is given by
	\begin{align*}
	\bm{u}(x,y) &= \begin{pmatrix}
	\frac{K_I}{2\mu}\sqrt{\frac{r}{2\pi}}{\rm cos}(\frac{\theta}{2})[\kappa -1 + 2{\rm sin}^2(\frac{\theta}{2})] \\
	\frac{K_I}{2\mu}\sqrt{\frac{r}{2\pi}}{\rm sin}(\frac{\theta}{2})[\kappa +1 - 2{\rm cos}^2(\frac{\theta}{2})]
	\end{pmatrix}, \\
	\bm{\sigma}(x,y) &= \begin{pmatrix}
	\frac{K_I}{\sqrt{2\pi r} }{\rm cos}(\frac{\theta}{2})[1 -{\rm sin}(\frac{\theta}{2}){\rm sin}(\frac{3\theta}{2})],& \frac{K_I}{\sqrt{2\pi r} }{\rm cos}(\frac{\theta}{2}) {\rm sin}(\frac{\theta}{2}){\rm cos}(\frac{3\theta}{2}) \\
	\frac{K_I}{\sqrt{2\pi r} }{\rm cos}(\frac{\theta}{2}) {\rm sin}(\frac{\theta}{2}){\rm cos}(\frac{3\theta}{2}),& \frac{K_I}{\sqrt{2\pi r} }{\rm cos}(\frac{\theta}{2})[1 + {\rm sin}(\frac{\theta}{2}){\rm sin}(\frac{3\theta}{2})] 
	\end{pmatrix},
	\end{align*}
	where  $r$ is the distance from the crack tip,   $\theta = \text{arctan2}(y-y_c,x-x_c)$,   $\kappa = \frac{3-\nu}{1+\nu}$,  and the stress intensity factor (SIF) $K_I=\sqrt{\frac{\pi}{2} }$.  
	We note that the boundary condition along the  line crack is a homogeneous Neumann condition, i.e.   $\bm{\sigma} \bm{n}|_{\partial\Omega_{N}} = 0$, and that $\bm{u}\notin \bm{H}^{3/2}(\Omega)$ but $\bm{u} \in  \bm{H}^{3/2-\epsilon}(\Omega)$ for any $\epsilon>0$.

 \begin{figure}[htp]
	\centering
	\includegraphics[height = 5 cm,width= 5.5 cm]{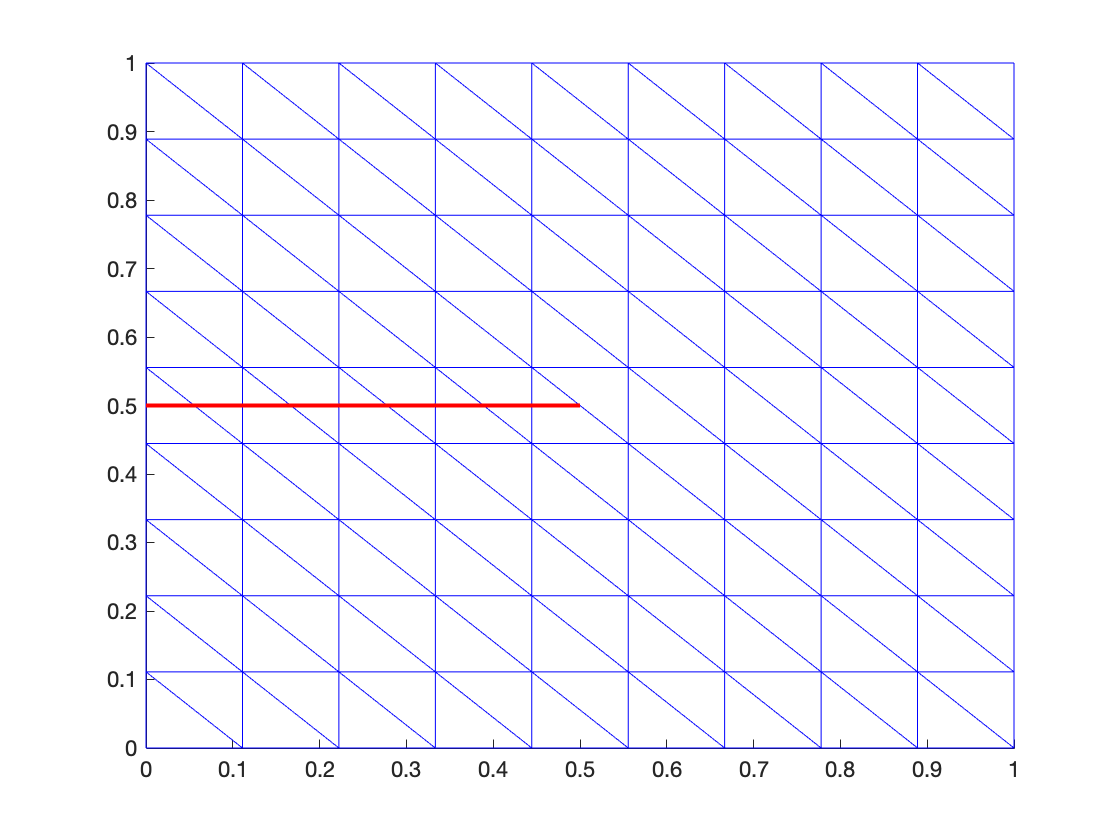} 
	\caption{The crack domain in Example \ref{ex5}:    $9\times 9$ mesh} \label{domain_crack}
\end{figure}

In  the X-HDG scheme  \eqref{xhdgscheme_1} we take $\mathbb{B}=[0,1]^2 $, and  use $N\times N$ uniform triangular meshes. Due to the low regularity of the exact solution, we only consider the lowest order  case of the scheme, i.e. $k=1$. 
From the numerical results in  Table \ref{ex_table5}, we can see that the convergence rate  is $0.5$ for the stress error $\lVert \bm{\sigma}-\bm{\sigma}_{h}\rVert_0$, which is as same as  that in \cite{BarbieriA2012}, and that  the convergence rate  is $1$ for the  displacement error $\lVert \bm{u}-\bm{u}_{h}\rVert_0$. The second component  of the displacement approximation  $\bm{u}_{h}$  at $129\times 129$ mesh  is also plotted   in Figure \ref{uh2crack}. 

\begin{table}[H]
	\normalsize
	\caption{History of convergence for Example \ref{ex5} }
	\label{ex_table5}
	\centering
	\footnotesize
	{
		\begin{tabular}{p{0.7cm}<{\centering}|p{1.3cm}<{\centering}|p{1.5cm}<{\centering}|p{0.6cm}<{\centering}|p{1.5cm}<{\centering}|p{0.6cm}<{\centering}}
			\hline   
			\multirow{3}{*}{k}&\multirow{3}{*}{mesh}&\multicolumn{2}{c|}{$\frac{\lVert \bm{u}-\bm{u}_{h}\rVert_0}{\lVert \bm{u}\rVert_0}$ }&\multicolumn{2}{c}{$\frac{\lVert \bm{\sigma}-\bm{\sigma}_{h}\rVert_0}{\lVert \bm{\sigma}\rVert_0}$}\cr\cline{3-6}  
			&&error&order&error&order\cr 
			\cline{1-6}
			\multirow{5}{*}{1}
			&$9\times 9$	    &3.5241E-02   & -- 	&2.5316E-01	  &	  --  \\
			&$17\times 17$	 &1.9991E-02    &0.89	&1.8320E-01	  &0.51		 \\
			&$33\times 33$	&1.0712E-02    &0.94	&1.3107E-01	  &0.50   \\
			&$65\times 65$	&5.5568E-03   &0.97	  &9.3236E-02  &0.50   	 \\
			&$129\times 129$	&2.8322E-03   &0.98	  &6.6125E-02  &0.50   	\\ 
			\hline
		\end{tabular}
	}
\end{table}

\begin{figure}[H]
	\centering
	\includegraphics[height = 5 cm,width= 5.5 cm]{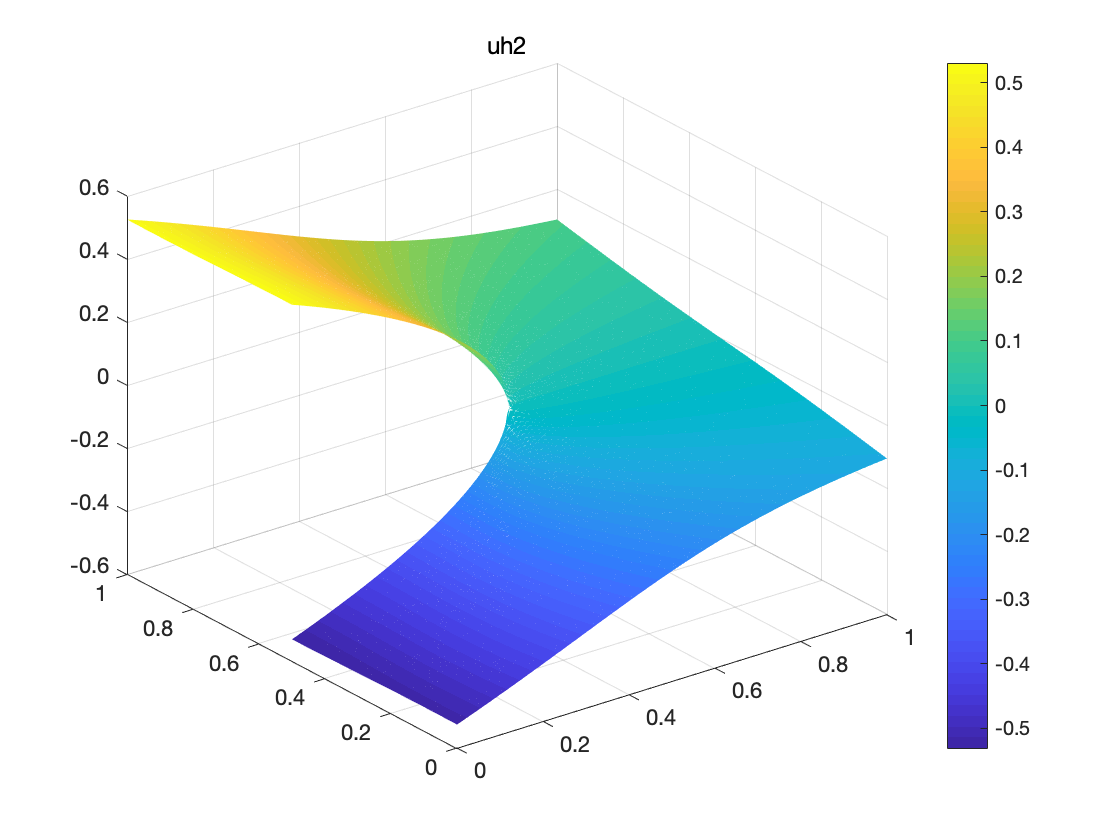} 
	\includegraphics[height = 5 cm,width= 5.5 cm]{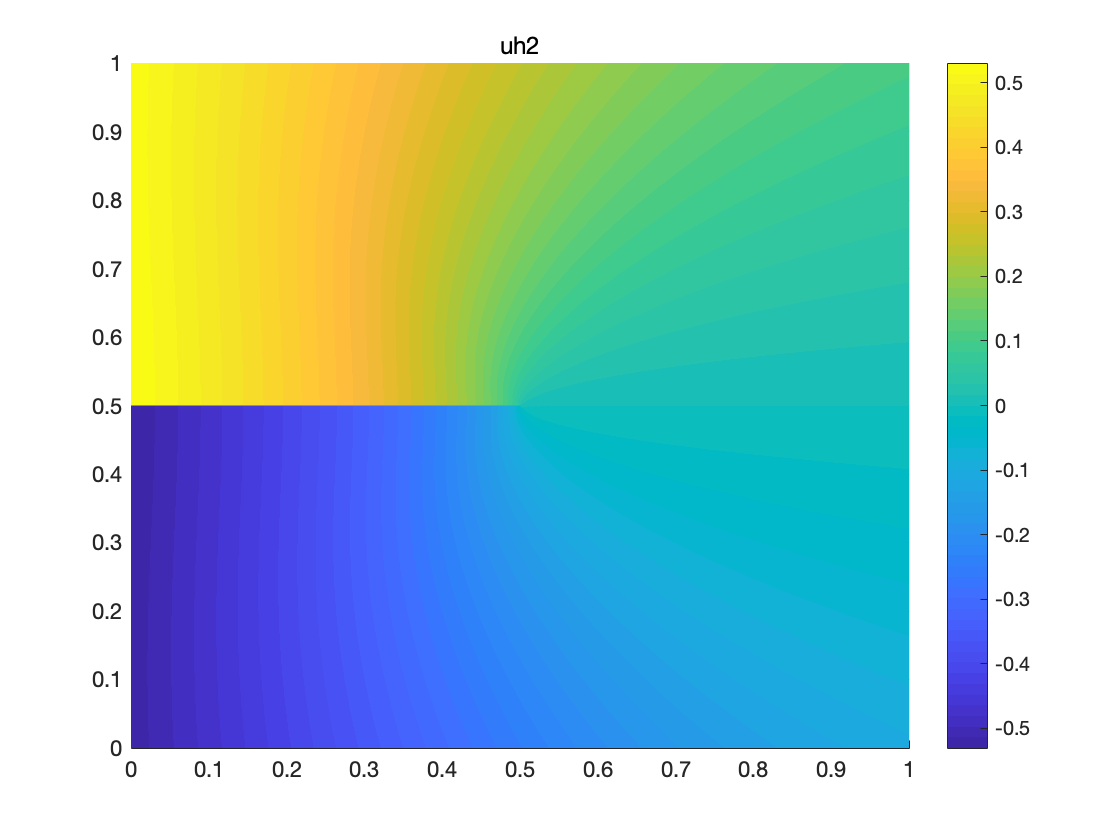} 
	\caption{The second component of displacement approximation in Example \ref{ex5}:  $129\times 129$ mesh}\label{uh2crack}
\end{figure}

\section{Concluding remarks}
In this paper, we have proposed and analyzed an arbitrary order  interface/boundary-unfitted eXtended hybridizable discontinuous Galerkin method  of optimal convergence for  linear elasticity interface problems.  
Numerical experiments have demonstrated the  performance and  robustness of the method.

\footnotesize

\end{document}